\newtheorem{theorem}{Theorem}[section]
\newtheorem{proposition}[theorem]{Proposition}
\newtheorem{cor}[theorem]{Corollary}
\newtheorem{lemma}[theorem]{Lemma}
\newtheorem{definition}[theorem]{Definition}
\begin{document}

\title{}
\author{\textbf{A Dirichlet Process Characterization of RBM in a Wedge} \\
~\\
Peter Lakner, Josh Reed and Bert Zwart}
\maketitle{}

\begin{abstract}
Reflected Brownian motion (RBM) in a wedge is a 2-dimensional stochastic  process $Z$ whose state space in $\mathbb{R}^2$ is given in polar coordinates by $S=\{(r,\theta): r \geq 0, 0 \leq \theta \leq \xi\}$ for some $0 < \xi < 2 \pi$. Let $\alpha= (\theta_1+\theta_2)/\xi$, where $-\pi/2 < \theta_1,\theta_2 < \pi/2$ are the directions of reflection of $Z$ off each of the two edges of the wedge as measured from the corresponding inward facing normal. We prove that in the case of $1 < \alpha < 2$, RBM in a wedge is a Dirichlet process. Specifically, its unique Doob-Meyer type decomposition is given by $Z=X+Y$, where $X$ is a two-dimensional Brownian motion and $Y$ is a continuous process of zero energy. Furthermore, we show that for $ p > \alpha $, the strong $p$-variation of the sample paths of $Y$ is finite on compact intervals, and, for $0 < p \leq \alpha $,  the strong $p$-variation of $Y$ is infinite on $[0,T]$ whenever $Z$ has been started from the origin. We also show that on excursion intervals of $Z$ away from the origin, $(Z,Y)$ satisfies the standard Skorokhod problem for $X$. However, on the entire time horizon $(Z,Y)$ does not satisfy the standard Skorokhod problem for $X$, but nevertheless we show that it satisfies the extended Skorkohod problem.
\end{abstract}

\section{Introduction and Main Results}
\label{Section:Introduction}

Reflected Brownian motion (RBM) in a wedge, as defined by Varadhan and Williams \cite{varadhan1985brownian}, is a 2-dimensional stochastic process $Z$ whose state space in $\mathbb{R}^2$ is given in polar coordinates by $S=\{(r,\theta): r \geq 0, 0 \leq \theta \leq \xi\}$ for some $0 < \xi < 2 \pi$. In the interior of $S$, RBM in a wedge behaves as a standard 2-d Brownian, whereas upon hitting the boundary of $S$, it is reflected back into the interior at an angle that depends upon which edge of the boundary has been hit. Let $\partial S_1=\{(r,\theta): r \ge 0, \theta = 0\}$ and $\partial S_2=\{(r,\theta): r \ge 0, \theta = \xi\}$. The angle of reflection off of $\partial S_1\setminus\{0\}$ is denoted by $ \theta_1$, and the angle of reflection off of  $\partial S_2\setminus\{0\}$ is denoted by $ \theta_2 $. Both of these angles are measured with respect to the inward facing normal off of each edge, with angles directed toward the origin assumed to be positive. See Figure \ref{Figure:State:Space:Of:RBM:Wedge} below for an illustration of the above quantities. The angle of reflection at the vertex of the wedge is discussed later in the paper.

In \cite{varadhan1985brownian}, Varadhan and Williams provided the following rigorous definition of RBM in a wedge as the solution to  a submartingale problem. For each $j=1,2,$ denote the direction of reflection off of the edge $\partial S_j\setminus\{0\}$ by $v_j$. Assume moreover that $v_j$ is normalized such that $v_j \cdot n_j =1$, where $n_j$ is the inward facing unit normal vector off of the edge $\partial S_j\setminus\{0\}$. Let $C_S$ denote the space of continuous functions with domain $\mathbb{R}_{+}=[0,\infty)$ and range $S$. For each $t \geq 0$ and $\omega \in C_S$, we denote by $Z(t):C_S \mapsto S$ the coordinate map $Z(t)(\omega)=Z(t,\omega)=\omega(t)$, and we also define the coordinate mapping process $Z=\{Z(t), t \geq 0\}$. Then, for each $t \geq 0$,
we set $\mathcal{M}_t = \sigma\{Z(s), 0 \leq s \leq t\}$, and set $\mathcal{M} = \sigma\{Z(s),  s \geq 0\}$. Next, for each $n \geq 1$ and $F \subset \mathbb{R}^2$, denote by $C^n(F)$ the set of $n$-times continuously differentiable functions in some domain containing $F$ and let $C^n_b$ be the set of functions in $C^n(F)$ that have bounded partial derivatives up to and including order $n$ on $F$. Finally, define the differential operators $D_j = v_j \cdot \nabla$ for $j=1,2,$ and denote by $\triangle$ the Laplacian operator.

\begin{figure}[h]
\begin{center}
\scalebox{.75}{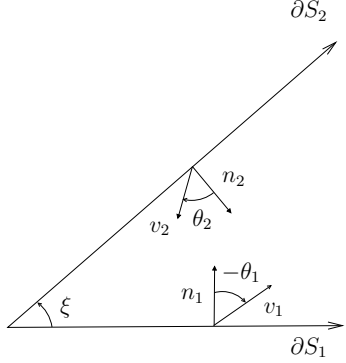~~~~~~~~}
\caption{The state space $S$ of RBM in a wedge.}
\label{Figure:State:Space:Of:RBM:Wedge}
\end{center}
\end{figure}

\begin{definition}\label{vw}[Varadhan and Williams \cite{varadhan1985brownian}] A family of probability measures $\{P^z, z \in S\}$ on $(C_S,\mathcal{M})$ is said to solve
the submartingale problem if for each $z \in S$, the following 3 conditions hold,
\begin{enumerate}
\item~$P^z(Z(0)=z)~=~1$, \label{Condition:1:Submartingale}

\item For each $f \in C^2_b(S)$, the process
\begin{eqnarray*}
\left\{ f(Z(t)) - \frac{1}{2} \int_0^t \triangle f(Z(s))ds,~~t \geq 0 \right\}
\end{eqnarray*}
is a submartingale on $(C_S,\mathcal{M},\mathcal{M}_t,P^z)$ whenever $f$ is constant in a neighborhood of the origin and satisfies $D_i f \geq 0$ on $\partial S_i$ for $i=1,2,$  \label{Condition:2:Submartingale}

\item
$$E^z \left[ \int_0^{\infty} 1\{Z(t)=0\}dt\right] ~=~0.
$$  \label{Condition:3:Submartingale}
\end{enumerate}
\end{definition}

Now define
\begin{eqnarray*}
\alpha &=& \frac{\theta_1+\theta_2}{\xi}.
\end{eqnarray*}

In \cite{varadhan1985brownian}, it was shown that if $\alpha < 2$, then there exists a unique solution $\{P^z, z \in S\}$ to the submartingale problem. Moreover \cite{varadhan1985brownian}, the family $\{P^z, z \in S\}$ possess the strong Markov property. If $\alpha \geq 2$, then there is no solution \cite{varadhan1985brownian} to the submartingale problem but there is a unique family of probability measures $\{P^z, z \in S\}$ satisfying Conditions \ref{Condition:1:Submartingale} and \ref{Condition:2:Submartingale} of Definition \ref{vw}. In this case, we may continue to refer to $\{P^z, z \in S\}$ as the solution to the submartingale problem, with the understanding that only Conditions \ref{Condition:1:Submartingale} and \ref{Condition:2:Submartingale} of Definition \ref{vw} are satisfied.

Now, as in Williams \cite{williams1985recurrence}, let $\mathcal{B}_S$ denote the Borel $\sigma$-algebra on $S$ and for each probability measure $\mu$ on $(S,\mathcal{B}_S)$, define the measure $P^{\mu}$ on $(C_S,\mathcal{M})$ by setting
\begin{eqnarray*}
P^{\mu}(A)&=& \int_S \mu(dz)P^z(A),~A \in \mathcal{M}.
\end{eqnarray*}
We then denote by $(C_S,\mathcal{M}^{\mu},P^{\mu})$ the completion of $(C_S,\mathcal{M},P^{\mu})$
and for each $t \geq 0$, we let $\mathcal{M}^{\mu}_t$ be the augmentation of $\mathcal{M}_t$ with respect to $(C_S,\mathcal{M}^{\mu},P^{\mu})$. If $\mu=\delta_{\{z\}}$ for some $z\in S$ (the Dirac measure centered at $z$), then we shall write ${\mathcal M}^z$ and  ${\mathcal M}_t^z$ instead of ${\mathcal M}^\mu$ and  ${\mathcal M}_t^\mu$.
Next, define  $\mathcal{F} = \bigcap_{\mu}{\mathcal M}^{\mu}$, where the intersection is taken over all $\mu$ which are probability measures on $(S,\mathcal{B}_S)$, and similarly set  $\mathcal{F}_t = \bigcap_{\mu}M^{\mu}_t$. It then follows \cite{williams1985recurrence} that the filtration $\mathcal{F}_t$ is right-continuous and, moreover, the process $\mathcal{Z}= (C_S,\mathcal{F},\mathcal{F}_t,Z(t),\theta_t,P^z) $ is a Hunt process with state space $(S,\mathcal{B}_S)$, where $\theta_t$ is the usual shift operator.

\subsection{Dirichlet Process Result}

In \cite{williams1985reflected}, Williams proved that for each $z \in S$, $Z$ is a semi-martingale on $(C_S,\mathcal{F},\mathcal{F}_t,P^z)$ if and only if $\alpha < 1$ or $\alpha \geq 2$ . More recently, Kang and Ramanan \cite{kang2010dirichlet} have proven  that in the case of $\alpha=1$, $Z$ is a Dirichlet process (see Definition \ref{Definition:Dirichlet:Process} below) on $(C_S,\mathcal{F},\mathcal{F}_t,P^z)$ for each $z \in S$. In Theorem \ref{Theorem:Main:Dirichlet:Process:Result} below, we complement the results of Williams \cite{williams1985reflected} and Kang and Ramanan \cite{kang2010dirichlet} by proving that in the case of $1 < \alpha < 2$, $Z$ is a Dirichlet process on $(C_S,\mathcal{F},\mathcal{F}_t,P^z)$ for each $z \in S$.

For each $E \subseteq \mathbb{R}_{+}$, let
\begin{eqnarray*}
\pi(E) &=&\{\pi=\{t_i,i=0,...,n\} \subseteq E : n \in \mathbb{N}_0, ~t_{i-1} \leq t_i~,i=1,...,n\}
\end{eqnarray*}
denote the set of all partitions of $E$, and define the mesh of a partition $\pi \in \pi(E)$ by setting
\begin{eqnarray*}
\| \pi \| &=& \max\{|t_i-t_{i-1}|:i=1,...,n\}.
\end{eqnarray*}
We then recall the definition of a continuous process with zero energy (see, for instance, Definition 3.2 of Kang and Ramanan \cite{kang2010dirichlet}).

\begin{definition}\label{Definition:Zero:Energy}
Let $z \in S$. A $2$-dimensional continuous process  $Y$ defined on $(C_S,\mathcal{F},\mathcal{F}_t,P^z) $ is said to be of zero energy if for each $T > 0$,
\begin{eqnarray*}
\sum_{t_i \in \pi^n}\|Y(t_i)-Y(t_{i-1})\|^2 &{\buildrel P \over \rightarrow} &0~\textrm{as}~n \rightarrow \infty,
\end{eqnarray*}
for any sequence $\{\pi^n, n \geq 1\}$ of partitions of $[0,T]$ with $\| \pi^n \| \rightarrow 0$ as $n \rightarrow \infty$.
\end{definition}

\noindent We now define a Dirichlet process as follows. (see Definition 2.4 of Coquet et al. \cite{coquet2006natural} or Definition 3.3 of \cite{kang2010dirichlet}).
\begin{definition}\label{Definition:Dirichlet:Process}
Let $z \in S$. The stochastic process $Z$ is said to be a Dirichlet process on $(C_S,\mathcal{F},\mathcal{F}_t,P^z)$ if we may write
\begin{eqnarray}
Z &=& X + Y,  \label{Display:Decomposition:Of:Z}
\end{eqnarray}
where $X$ is an $\mathcal{F}_t$-adapted local martingale and $Y$ is a continuous, $\mathcal{F}_t$-adapted zero energy process with $Y(0)=0$.
\end{definition}
\noindent The above definition of a Dirichlet process differs slightly from that of F\"{o}llmer \cite{follmer1981dirichlet}. However, it is sufficiently strong in order to enable one to obtain
formulas for performing certain operations on $Z$, such as a change-of-variables (see F\"{o}llmer \cite{follmer1981calcul}). Note also that if $Z$ is a semi-martingale, then it is also a Dirichlet process as defined above. However, the converse is not necessarily true. It may also be shown that the Doob-Meyer type decomposition of $Z$ given by \eqref{Display:Decomposition:Of:Z} is unique (see Remark 3.4 of \cite{kang2010dirichlet}).

\begin{theorem}\label{Theorem:Main:Dirichlet:Process:Result} Suppose that $1 < \alpha < 2$. Then, $Z$ has the decomposition
\begin{eqnarray}
Z&=&X+Y,\label{decomposition}
\end{eqnarray}
where  $(X,Y)$ is a pair of processes on $(C_S,\mathcal{F},\mathcal{F}_t)$
such that for each $z \in S$, $X$ is a standard Brownian motion started from $z$ and $Y$ is a process of zero energy on $(C_S,\mathcal{F},\mathcal{F}_t, P^z)$. In particular, for each $z\in S$, the process $Z$ is a Dirichlet process on $(C_S,\mathcal{F},\mathcal{F}_t,P^z)$.
\end{theorem}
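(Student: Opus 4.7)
The overall strategy is to construct a standard 2-dimensional Brownian motion $X$ globally on $(C_S, \mathcal{F}, \mathcal{F}_t)$ and verify that $Y := Z - X$ has zero energy. The candidate $X$ is forced by Williams \cite{williams1985reflected}: away from the vertex $Z$ is a semimartingale satisfying the standard Skorokhod problem, so for $\epsilon > 0$, setting $\tau_\epsilon = \inf\{t \ge 0 : |Z(t)| \le \epsilon\}$, on $[0, \tau_\epsilon]$ the decomposition $Z - Z(0) = X^{(\epsilon)} + Y^{(\epsilon)}$ holds with $X^{(\epsilon)}$ a Brownian motion and $Y^{(\epsilon)}$ of bounded variation supported on $\partial S_1 \cup \partial S_2$. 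These decompositions are consistent in $\epsilon$, and the strong Markov property of the Hunt process $\mathcal{Z}$ applied at successive entries into the $\epsilon$-ball lets us extend $X$ across the excursion structure of $Z$ from the origin. By Condition~\ref{Condition:3:Submartingale} of Definition~\ref{vw} the zero set $\{t : Z(t) = 0\}$ has Lebesgue measure zero, so the union of excursion intervals carries all the time mass; granting that the pasted $X$ extends continuously (which ultimately requires the quantitative control of excursions near $0$ discussed below), a L\'evy-type argument using that $X$ is a continuous local martingale with $[X_i, X_j]_t = \delta_{ij} t$ (inherited from the excursion pieces and the null zero set) identifies $X$ as a standard 2-dimensional Brownian motion starting from $z$ under $P^z$. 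Setting $Y := Z - X$ then gives continuity, $\mathcal{F}_t$-adaptedness and $Y(0) = 0$ for free.

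To verify zero energy, fix $T > 0$ and partitions $\pi^n$ of $[0,T]$ with $\|\pi^n\| \to 0$. For a cutoff $\epsilon > 0$, call $[t_{i-1}, t_i]$ \emph{far} if $\inf_{s \in [t_{i-1},t_i]} |Z(s)| > \epsilon$ and \emph{near} otherwise. On far intervals $Y$ coincides with the BV reflection process $Y^{(\epsilon)}$, hence
\[\sum_{i \text{ far}} \|Y(t_i) - Y(t_{i-1})\|^2 \;\le\; \|\pi^n\|\cdot V^{T,\epsilon}(Y) \longrightarrow 0 \qquad \text{as } n \to \infty,\]
where $V^{T,\epsilon}(Y)$ is the BV of $Y$ accumulated on $\{s \le T : |Z(s)| > \epsilon\}$, which is $P^z$-a.s.\ finite. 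For near intervals one uses the triangle bound
\[\sum_{i \text{ near}} \|Y(t_i) - Y(t_{i-1})\|^2 \;\le\; 2\sum_{i \text{ near}} \bigl(\|Z(t_i) - Z(t_{i-1})\|^2 + \|X(t_i) - X(t_{i-1})\|^2\bigr),\]
and shows that, by controlling the count of near intervals via the number of $\epsilon$-downcrossings of $|Z|$ on $[0,T]$, the near sum tends to $0$ in probability upon sending first $n \to \infty$ and then $\epsilon \downarrow 0$.

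The main obstacle is the near-vertex estimate. The reflection terms $Y^{(\epsilon)}$ have bounded variation whose mass on short excursions of $Z$ from $0$ blows up as $\epsilon \downarrow 0$, so there is no hope of absolute summability and one must extract cancellation from the squaring. The relevant scaling exponent is precisely $\alpha$: one expects, as the authors announce in the abstract, that the strong $p$-variation of $Y$ is finite for $p > \alpha$ and infinite for $p \le \alpha$. The key step in the plan is therefore to establish the finiteness of the strong $p$-variation of $Y$ for $p > \alpha$, which rests on the self-similarity of RBM at the vertex and sharp excursion-length estimates that depend delicately on $\alpha$. Once this is in hand, choosing any $p \in (\alpha, 2)$---which exists precisely because $\alpha < 2$---and using the standard fact that finite strong $p$-variation with $p < 2$ implies vanishing $2$-variation along every partition sequence with mesh tending to $0$, yields the zero energy of $Y$ and completes the proof.
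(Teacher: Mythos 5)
Your overall architecture matches the paper's: construct $X$ from the semimartingale structure of $Z$ away from the vertex, set $Y=Z-X$, reduce zero energy to finiteness of the strong $p$-variation of $Y$ for some $p\in(\alpha,2)$, and then conclude via the Chistyakov--Galkin reparametrization $Y=g\circ\varphi$ with $g$ H\"older-$1/p$ and $\varphi$ continuous nondecreasing (your closing "standard fact" is valid only because $Y$ is continuous, which lets one take $\varphi$ continuous; it fails for discontinuous functions, so you should say this). The problem is that the entire content of the theorem is concentrated in the step you defer: "establish the finiteness of the strong $p$-variation of $Y$ for $p>\alpha$, which rests on the self-similarity of RBM at the vertex and sharp excursion-length estimates." That sentence names the destination, not a route. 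Your earlier far/near partition argument does not substitute for it: the near sum is bounded by $Z$- and $X$-increments squared, but $\sum\|Z(t_i)-Z(t_{i-1})\|^2$ cannot be controlled without already knowing something like the $p$-variation of $Y$ (bounding it by $X$- and $Y$-increments is circular, since $Z$ is not a semimartingale), and counting near intervals by downcrossings does not give a bound that survives $n\to\infty$ at fixed $\epsilon$.

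What the paper actually does to fill this gap is the construction of the time change $\varphi_{p,q}$ in \eqref{Display:Definition:Of:Varphi}: on the zero set $\Lambda$ it equals the strong $q$-variation of the inverse local time $L^{-1}$, which is a stable subordinator of index $\alpha/2$ under $P^0$ and hence has finite $q$-variation for $q>\alpha/2$ (Bretagnolle/Simon), giving $\varphi_{p,q}(t)=\sum_{i:G_i<t}(D_i-G_i)^q$ on $\Lambda$; on each excursion interval it interpolates using the normalized $p$-variation of $Y$. One then shows $Y\circ\varphi_{p,q}^{-1}$ is H\"older-$1/p$: on $\varphi_{p,q}(\Lambda)$ this uses $Y=-X$ there together with the H\"older continuity of $X$ and the $1/q$-H\"older continuity of $\varphi_{p,q}^{-1}$ (choosing $q<\eta^\star p$ with $\eta^\star<1/2$, possible exactly because $\alpha/2<p/2$); on excursion intervals it uses the Skorokhod-problem structure ($R^{-1}Y$ has nondecreasing components there, so $V_p(Y,[G_k,D_k])\lesssim\|X(D_k)-X(G_k)\|^p\lesssim(D_k-G_k)^{\eta p}$, uniformly over excursions before a fixed time). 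None of this is routine, and it is precisely where the hypotheses $1<\alpha<2$ enter quantitatively. Separately, your construction of $X$ by "pasting via the strong Markov property" is looser than what is needed: the paper builds $X$ as an $L^2$-martingale limit of the stopped increment processes $W^\delta$ and invokes a measurable-version result to obtain a single process on $(C_S,\mathcal{F},\mathcal{F}_t)$ valid simultaneously for every $z$, which the theorem statement requires; a per-$z$ pasting does not obviously produce that. As it stands, the proposal is a correct outline of the paper's strategy with its central lemma missing.
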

\noindent
\noindent We emphasize that the pair of processes $(X,Y)$ appearing in Theorem \ref{Theorem:Main:Dirichlet:Process:Result} do not depend on $z\in S$. That is, there is a single pair of processes $(X,Y)$ on $(C_S,\mathcal{F},\mathcal{F}_t)$ such that the statement of the above theorem hold for each $z\in S$.

\subsection{Roughness of the Paths of $Y$ Result}

Theorem \ref{Theorem:Main:Dirichlet:Process:Result} implies that in the case of $1  < \alpha < 2$, the paths of $Y$ are of zero energy on $(C_S,\mathcal{F},\mathcal{F}_t,P^z)$ for each $z \in S$. We now turn our attention to providing a refinement of this result using the concept of strong $p$-variation.

\begin{definition}\label{Definition:Strong:P:Variation}
Let $E \subset \mathbb{R}_{+}$ and $p > 0$. The strong $p$-variation of a function $f: \mathbb{R}_{+} \mapsto \mathbb{R}^n$ on $E$ is defined by
\begin{eqnarray*}
V_p(f,E) &=&  \sup \left\{ \sum_{t_i \in \pi}\|f(t_i)-f(t_{i-1})\|^p  : \pi \in \pi(E)\right\}.
\end{eqnarray*}
\end{definition}

\begin{theorem}\label{Theorem:Converse:Dirichlet:Process:Result}Suppose that  $1 < \alpha < 2$. Then, for each $p >  \alpha $ and $z \in S$,
\begin{eqnarray}
P^z (V_p(Y,[0,T]) < + \infty) &=&1,~T \geq 0. \label{Display:Condition:Main:Dirichlet}
\end{eqnarray}
Furthermore, for each $0 < p \leq \alpha$,
\begin{eqnarray}
P^0 (V_p(Y,[0,T]) < + \infty) &=&0,~T \geq 0. \label{Display:Condition:Converse:Main:Dirichlet}
\end{eqnarray}
\end{theorem}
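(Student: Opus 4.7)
The strategy combines excursion theory of $Z$ at the origin with the Skorokhod problem structure away from it. Fix $T>0$, set $\mathcal Z_T=\{t\in[0,T]:Z(t)=0\}$, and let $\{(g_k,d_k)\}_{k\ge 1}$ enumerate the maximal excursion intervals of $Z$ from the origin meeting $[0,T]$. On each $(g_k,d_k)$ the abstract's assertion that $(Z,Y)$ satisfies the standard Skorokhod problem for $X$, together with the Lipschitz property of the Skorokhod map in an oblique wedge, supplies a constant $C$ depending only on the angles such that the total variation of $Y$ on $(g_k,d_k)$ is at most $Ch_k$, where $h_k:=\sup_{t\in[g_k,d_k]}|Z(t)|$. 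On the zero set $\mathcal Z_T$ the identity $Z=X+Y$ with $Z\equiv 0$ forces $Y=-X$, so the $Y$-increments along any partition contained in $\mathcal Z_T$ coincide with those of $-X$.

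Next I introduce a local time $L$ of the Hunt process $\mathcal Z$ at the vertex and its right-continuous inverse $\tau_s=\inf\{t\ge 0:L_t>s\}$. Using the Brownian self-similarity of $Z$ under $P^0$ and uniqueness of the submartingale problem, $\tau$ is a stable subordinator of index $\alpha/2$; its jumps are exactly the excursion lengths $d_k-g_k$, and Brownian scaling of excursions yields $h_k\le C'(d_k-g_k)^{1/2}$ in the tail sense needed below. To prove the finiteness statement for $p>\alpha$, I refine any partition of $[0,T]$ by inserting all excursion endpoints and split the resulting $p$-variation into a sum over excursion intervals and a sum along $\mathcal Z_T$. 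The first is bounded by
\[
\sum_k (Ch_k)^p \le C^p(C')^p \sum_k (d_k-g_k)^{p/2},
\]
which is almost surely finite since the Lévy measure of an $\alpha/2$-stable subordinator integrates $x^{p/2}$ near $0$ exactly when $p/2>\alpha/2$. The second is bounded by $V_p(X\circ\tau,[0,L_T])$, and $X\circ\tau$ has the regularity of a symmetric $\alpha$-stable process, so its $p$-variation is finite for $p>\alpha$.

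For the infiniteness half under $P^0$, I instead retain only partition points in $\mathcal Z_T$, write them as $\tau_{s_1}<\tau_{s_2}<\cdots$, and use $Y=-X$ on $\mathcal Z_T$ to obtain
\[
V_p(Y,[0,T]) \ge V_p(X\circ\tau,[0,L_T]).
\]
Since $L_T>0$ $P^0$-almost surely for $T>0$, and $X\circ\tau$ inherits the $p$-variation of a symmetric $\alpha$-stable process, the right-hand side is $+\infty$ whenever $p\le\alpha$, proving \eqref{Display:Condition:Converse:Main:Dirichlet}. A Blumenthal $0$--$1$ argument combined with the scaling $cY_{\cdot/c^2}\stackrel{d}{=}Y$ under $P^0$ upgrades any positive-probability divergence to an almost sure one.

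The hardest step is the identification of $\tau$ as an $\alpha/2$-stable subordinator and the corresponding control of $X\circ\tau$. Because $X$ and $\tau$ are not independent---both are built from the same RBM in the wedge---the classical subordination theorem cannot be applied directly; instead the stable index must be extracted from a scaling/self-similarity analysis of the Varadhan--Williams construction, and the lower bound on $V_p(X\circ\tau,[0,L_T])$ for $p\le\alpha$ must be established via direct moment estimates on excursion-endpoint increments of $X$. Once the stable structure of $\tau$ is in hand, the remaining arguments---in particular the reduction of partitions to excursion endpoints and the handover between the two contributions to $V_p(Y,[0,T])$---are essentially bookkeeping.
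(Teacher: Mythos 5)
Your plan has the right ingredients --- an excursion decomposition, the identity $Y=-X$ on the zero set, and the $\alpha/2$-stable structure of the inverse local time --- but it contains one false claim and leaves the central estimate unproven.

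The false claim is the pathwise bound $h_k \le C'(d_k-g_k)^{1/2}$. The maximum of a Brownian-type excursion scales like the square root of its length \emph{in distribution}, but there is no deterministic constant $C'$ making this hold simultaneously for all excursions: rare excursions have anomalously large height. Thus your bound $\sum_k (Ch_k)^p \le C^p(C')^p\sum_k(d_k-g_k)^{p/2}$ does not hold pathwise, and without it the excursion contribution to $V_p(Y,[0,T])$ is not controlled. The paper avoids this completely: instead of summing excursion maxima it constructs a single continuous increasing time change $\varphi_{p,q}$ (built from the $q$-variation of $L^{-1}$ on the zero set and renormalized $p$-variation of $Y$ inside excursions) and shows $Y = g_p\circ\varphi_{p,q}$ with $g_p$ locally $1/p$-H\"older. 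The H\"older constant of $g_p$ on excursion intervals is controlled by the \emph{local H\"older continuity of $X$ evaluated at the excursion endpoints} (via the Skorokhod identity $Y(D_k)-Y(G_k)=-(X(D_k)-X(G_k))$), not by excursion heights, so no pathwise bound of the form you propose is needed. Also note that the finiteness half of the theorem is actually established inside the proof of the Dirichlet-process theorem and is simply quoted; the dedicated proof of this theorem in the paper only concerns the converse.

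For the converse, you reach the same inequality $V_p(Y,[0,T]) \ge V_p(X\circ L^{-1},[0,L(T)])$ as the paper, and you correctly identify that the hardest step is showing $X\circ L^{-1}$ is $\alpha$-stable in spite of the dependence between $X$ and $L^{-1}$. But you stop there; this is asserted, not proven. The paper's Proposition~5.1 supplies it in two steps: (i) stationary independent increments follow from the strong Markov property combined with the additivity and shift-covariance of the measurable map $F_\gamma$ that recovers $X$ from the path of $Z$; (ii) the $\alpha$-stable scaling follows from Williams' submartingale decomposition of $\phi(Z(t))=r^\alpha\cos(\alpha\theta-\theta_1)$, the relation $L=cA$, the scaling law of $Z$ under $P^0$, and the scaling covariance $aF_\gamma(\omega)(\lambda\cdot)=F_{a\gamma}(a\omega(\lambda\cdot))$. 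Once that is in hand, the paper appeals to Bretagnolle's dichotomy for the $p$-variation of a L\'evy process (finite a.s. iff $\int_{|u|<1}|u|^p\,\nu(du)<\infty$), which gives the almost-sure infiniteness directly with no need for the Blumenthal $0$--$1$ and scaling argument you propose; your argument would also work, but the L\'evy-process dichotomy makes it moot.
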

\noindent

\noindent Since $ \alpha < 2$ in the statement of Theorem \ref{Theorem:Converse:Dirichlet:Process:Result}, it is straightforward to show using the relationship between functions of finite strong $p$-variation and $1/p$-H\"{o}lder continuous functions \cite{chistyakov1998maps} that \eqref{Display:Condition:Main:Dirichlet} automatically implies that $Y$ is of zero energy on $(C_S,\mathcal{F},\mathcal{F}_t,P^z)$ for each $z \in S$.

\subsection{Extended Skorokhod Map Result}
\label{Section:Extended:Skorokhod:Map:Results}

We now provide an additional connection between the pair of processes $(X,Y)$ appearing in Theorem  \ref{Theorem:Main:Dirichlet:Process:Result}
and the process $Z$ itself. Let $D(\mathbb{R}_{+},\mathbb{R}^2)$ denote the space of $\mathbb{R}^2$-valued functions, with domain $\mathbb{R}_{+}$, that are right-continuous with left limits. Also, let $D_S(\mathbb{R}_{+},\mathbb{R}^2)$ be the set of $f \in D(\mathbb{R}_{+},\mathbb{R}^2)$ such that $f(0) \in S$. Next, let $d(\cdot)$ be a set-valued mapping defined on $ S$ such that $d(z)$ is a closed convex cone in ${\mathbb R}^2$ for every $z\in S$. In particular, we define
\begin{eqnarray}
d(z) &=&\begin{cases}
\{a v_1, a \geq 0\}, &\text{for $z \in  \partial S_1\setminus\{0\}$,}\\
\{a v_2, a \geq 0\}, &\text{for $z \in  \partial S_2\setminus\{0\}$,}\\
V, &\text{for}\ z=0,\\
\{0\},&\text{for}\ z\in\text{int}(S),\\
\end{cases} \label{Display:Definition:Of:Direction:Of:Reflection}
\end{eqnarray}
where the selection of the closed convex cone $V\subset{\mathbb R}^2$    will be discussed in a moment. For a set $A \subseteq \mathbb{R}^2$, let $\overline{\textrm{co}}(A)$ denote the closure of the convex hull of $A$. We then recall from Ramanan \cite{ramanan2006reflected} the definition
of the extended Skorokhod problem (ESP).

\begin{definition}\label{Definition:Extended:Skorohod:Problem}
The pair of processes $(\phi,\eta) \in D_S(\mathbb{R}_{+},\mathbb{R}^2) \times D(\mathbb{R}_{+},\mathbb{R}^2)$ solve the ESP $(S,d(\cdot))$ for $\psi \in D_S(\mathbb{R}_{+},\mathbb{R}^2)$ if $\phi(0)=\psi(0)$, and if for all $t \in \mathbb{R}_{+}$,
the following properties hold,
\begin{enumerate}
\item~$\phi(t) ~=~ \psi(t) + \eta(t)$, \label{Definition:ESP:Item:One}
\item~$\phi(t) \in S$, \label{Definition:ESP:Item:Two}
\item For every $s \in [0,t]$, \label{Definition:ESP:Item:Three}
\begin{eqnarray*}
\eta(t) - \eta(s) &\in& \overline{\mathrm{co}}[\cup_{u \in (s,t]}d(\phi(u))],
\end{eqnarray*}
\item $\eta(t)-\eta(t-) \in  \overline{\mathrm{co}}[d(\phi(t))]$. \label{Definition:ESP:Item:Four}
\end{enumerate}
\end{definition}

\begin{theorem}\label{necessary}
Suppose that $1<\alpha<2$. Then, for each $z \in S$, the ESP $(S,d(\cdot))$ for the Brownian motion $X$ on $(C_S,\mathcal{F},\mathcal{F}_t,P^z)$
has a solution $P^z$-a.s. if and only if
\begin{equation}\overline{\mathrm{co}}(V\cup \{a v_1, a \geq 0\} \cup \{a v_2, a \geq 0\} )={\mathbb R}^2.\label{full:space}\end{equation}
In this case, $(Z,Y)$ solves the ESP $(S,d(\cdot))$ for $X$.
\end{theorem}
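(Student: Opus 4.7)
The plan is to take the pair $(Z,Y)$ from Theorem \ref{Theorem:Main:Dirichlet:Process:Result} as the candidate $(\phi,\eta)$ and verify the four conditions of Definition \ref{Definition:Extended:Skorohod:Problem} under \eqref{full:space}, then handle the converse by a separating-hyperplane argument. Conditions \ref{Definition:ESP:Item:One}, \ref{Definition:ESP:Item:Two}, and \ref{Definition:ESP:Item:Four} are essentially immediate: the first is the decomposition $Z=X+Y$; the second holds since $Z$ has state space $S$; the fourth holds because $Y$ is continuous, so $Y(t)-Y(t-)=0 \in d(Z(t))$, each cone $d(\cdot)$ containing $0$.

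The work for the ``if'' direction lies in Condition \ref{Definition:ESP:Item:Three}, which I would verify for fixed $0\le s<t$ by splitting on whether $Z$ visits the origin during $(s,t]$. If $Z(u)\neq 0$ for every $u\in(s,t]$, then the excursion-theoretic claim previewed in the abstract says $(Z,Y)$ solves the \emph{standard} Skorokhod problem on $(s,t]$; the resulting local-time representation writes $Y(t)-Y(s)$ as a non-negative combination of $v_1$ over times on $\partial S_1\setminus\{0\}$ and $v_2$ over times on $\partial S_2\setminus\{0\}$, which is already in $\overline{\mathrm{co}}[\cup_{u\in(s,t]}d(Z(u))]$. If instead $Z(u^*)=0$ for some $u^*\in(s,t]$, then $V=d(0)$ belongs to the union, and by path continuity together with the oscillatory boundary behaviour of $Z$ near origin-visits when $1<\alpha<2$, both rays $\{av_j, a\ge 0\}$ also lie in the closure of the union. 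Hypothesis \eqref{full:space} then upgrades the right-hand side of Condition \ref{Definition:ESP:Item:Three} to all of $\mathbb{R}^2$, and the condition holds trivially.

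For the ``only if'' direction, suppose an ESP solution $(\phi,\eta)$ exists $P^z$-a.s.\ but \eqref{full:space} fails. Set $C=\overline{\mathrm{co}}(V\cup\{av_1,a\ge 0\}\cup\{av_2,a\ge 0\})\subsetneq\mathbb{R}^2$ and pick, by Hahn--Banach, a unit vector $w$ with $\langle c,w\rangle\leq 0$ for every $c\in C$. Condition \ref{Definition:ESP:Item:Three} gives $\langle \eta(t)-\eta(s),w\rangle\leq 0$ for all $s\leq t$, so $\langle\eta(\cdot),w\rangle$ is non-increasing. Combining with $\phi=X+\eta$ and $\phi\in S$ yields $\langle X(t),w\rangle \geq \langle\phi(t),w\rangle \geq \inf_{x\in S}\langle x,w\rangle$. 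Arranging $w$ to lie in the polar cone $S^*$ of the wedge makes this infimum zero, and then $\langle X(t),w\rangle\geq 0$ for all $t$ contradicts the unbounded oscillation of Brownian motion in the direction $w$.

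The main obstacle I anticipate is the final geometric step: securing a nonzero $w\in C^*\cap S^*$. The idea is to use that $v_j\cdot n_j=1>0$ with $-\pi/2<\theta_j<\pi/2$ forces each $v_j$ strictly to the inner side of $\partial S_j$; together with the wedge geometry, this should show that any closed convex cone $C\supseteq\{av_1\}\cup\{av_2\}$ that omits some direction of $\mathbb{R}^2$ must omit a direction inside $S^*$ as well. If this direct argument breaks down in a degenerate sub-regime, my fallback is to invoke the uniqueness of the Doob--Meyer-type decomposition noted after Definition \ref{Definition:Dirichlet:Process} (forcing any ESP solution to satisfy $\eta=Y$) together with the infinite $p$-variation of $Y$ for $p\le\alpha$ from Theorem \ref{Theorem:Converse:Dirichlet:Process:Result}, to contradict directly the requirement that $\eta$ have all its increments in the proper cone $C$.
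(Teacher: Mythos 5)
Your overall architecture coincides with the paper's: sufficiency is proved by verifying Condition \ref{Definition:ESP:Item:Three} of Definition \ref{Definition:Extended:Skorohod:Problem} with a case split on whether the interval meets the zero set $\Lambda$, using Part \ref{Proposition:Skorohod:Problem:Part:2} of Theorem \ref{Proposition:Skorokhod:Problem} on intervals inside a single excursion; necessity is a cone argument. Your ``only if'' direction is sound and the geometric step you worry about does go through: since $\alpha>1$, the angle between $v_1$ and $v_2$ measured through $S$ exceeds $\pi$, so the convex cone generated by the two reflection rays already contains $-S$; hence $C:=\overline{\mathrm{co}}(V\cup\{av_1\}\cup\{av_2\})\supseteq -S$, so its polar cone is contained in the dual cone $S^{*}$ (nondegenerate because $\alpha>1$ forces $\xi<\pi$), and any nonzero separating $w$ automatically satisfies $\inf_{x\in S}\langle x,w\rangle=0$. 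This is a legitimate alternative to the paper's Proposition \ref{first:proposition}, which instead deduces $C\cap S=\{0\}$ and derives a contradiction from the event $\{X(t)\in C\setminus\{0\}\}$ forcing $\tilde Z(t)=0$ and hence $\pm X(t)\in C$. Your fallback (uniqueness of the Doob--Meyer decomposition plus Theorem \ref{Theorem:Converse:Dirichlet:Process:Result}) should be discarded: an ESP solution $\eta$ is not a priori of zero energy, so uniqueness does not force $\eta=Y$, and if $C$ is a half-plane, increments confined to $C$ do not contradict infinite $p$-variation.

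The genuine gap is in Case B of the ``if'' direction. You assert that ``the oscillatory boundary behaviour of $Z$ near origin-visits'' places both rays $\{av_1,a\ge0\}$ and $\{av_2,a\ge0\}$ in $\cup_{u\in(s,t]}d(Z(u))$, but this is the entire technical content of the sufficiency proof and cannot be taken for granted: you must exhibit $u_1,u_2\in(s,t]$ with $Z(u_j)\in\partial S_j\setminus\{0\}$ (note that $d$ assigns $V$, not the boundary rays, to the vertex itself, so visiting $0$ alone is not enough). The paper establishes this in two nontrivial steps: Lemma \ref{Proposition:Excursions:Hit:Both:Boundaries} shows $P^0(T_{\partial S_j}>0)=0$ via Brownian scaling, the Blumenthal $0$--$1$ law, and a positive-probability hitting estimate; Lemma \ref{endpoints:are:nice} then transfers this to every excursion right-endpoint simultaneously by applying the strong Markov property at the countably many stopping times $T_k^n$ (one cannot apply the Markov property at an arbitrary random time in $\Lambda$, which is why the reduction to these stopping times matters). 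Finally, to use Lemma \ref{endpoints:are:nice} one must actually locate an excursion endpoint strictly inside $(s,t)$, which the paper does by combining Corollary \ref{no:isolated} with Condition \ref{Condition:3:Submartingale} of the submartingale problem. Without some version of these arguments, Condition \ref{Definition:ESP:Item:Three} is unverified precisely on the intervals straddling $\Lambda$, which is exactly where hypothesis \eqref{full:space} is needed.
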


\noindent By trivially setting $V=\mathbb{R}^2$, it follows that one may always find a $V$ such that \eqref{full:space} holds. However, using the fact that $1 < \alpha < 2$, it is straightforward to verify that the smaller set $V=\{a  v_0, a \geq 0 \}$ for any $v_0$ in the interior of $S$ satisfies \eqref{full:space} as well. Note also that we stop short of claiming in Theorem \ref{necessary} that $(Z,Y)$ is the unique solution to the ESP $(S,d(\cdot))$ for $X$.

\subsection{Organization of the Remainder of the Paper}

For the remainder of the paper, we assume that
\begin{eqnarray*}
&1 < \alpha < 2.&
\end{eqnarray*}
In Section \ref{Section:Excursion:Theory:Background}, we provide some useful results related to the zero set of RBM in a wedge. Much of the material found in Section \ref{Section:Excursion:Theory:Background} is based on the results of Williams \cite{williams1987local}. In Section \ref{Section:Identification:of:X:and:Skorokhod:Problem}, we identify the Brownian motion $X$ which appears in the Doob-Meyer decomposition of $Z$ given by Theorem \ref{Theorem:Main:Dirichlet:Process:Result}. Our proofs of Theorems \ref{Theorem:Main:Dirichlet:Process:Result}, \ref{Theorem:Converse:Dirichlet:Process:Result} and \ref{necessary} may be found in Sections \ref{section:dirichlet:process}, \ref{section:converse:dirichlet} and \ref{section:esp}, respectively.

\section{Results Related to the Zero Set of $Z$}
\label{Section:Excursion:Theory:Background}

The following results are helpful as we proceed throughout the paper. Let $\Lambda = \{t \in \mathbb{R}_{+} : Z(t)=0\}$ denote the zero set of $Z$ and  note that since $Z$ is a continuous process for every $\omega
\in C_S$, it follows that $\Lambda$ is a closed set for every $\omega
\in C_S$. Moreover, since by Lemma 2.2 of Williams \cite{williams1987local} the origin is a regular point for $Z$, using standard arguments (see Theorem 2.9.6 of Karatzas and Shreve \cite{KaratzasShreve}) it may be shown that for each $z \in S$, the set $\Lambda$ contains no isolated points $P^z$-a.s. Next, let $\Lambda^C=\{t \in \mathbb{R}_{+} : Z(t) \neq 0\}$ denote the complement of $\Lambda$ in $\mathbb{R}_{+}$. It then follows that $\Lambda^C$ is an open set in $\mathbb{R}_{+}$, and in fact it may be written as
\begin{eqnarray}
\Lambda^C &=&[0,\tau_0) \cup  \left( \bigcup_{i=1}^{\infty}(G_i,D_i) \right), \label{display:complement:of:lambda}
\end{eqnarray}
where  $\tau_0$ is the first hitting time of the origin by $Z$, i.e.,
\begin{equation}\tau_0=\inf\{t>0: Z(t)=0\}.\label{tau:zero}\end{equation}

 Note that $\tau_0=0$, $P^0$-a.s., in which case we interpret $[0,0)$ as the empty set.
For each $i \geq 1$, we  refer to $(G_i,D_i)$ as an excursion interval of $Z$ away from the vertex and refer to the paths of $Z$ over such intervals as excursions from the vertex. It may easily be shown that the sequence of random times $\{(G_i,D_i), i \geq 1\}$ can be selected such that $(G_i,D_i):C_S \mapsto \mathbb{R}_{+}^2$ is a measurable function on $(C_S,\mathcal{F})$ for each $i \geq 1$. We assume for the remainder of the paper that such a selection has been made.

Closely related to the zero set $\Lambda$ is the concept of the local time of $Z$ at the origin. Since the origin is a regular  point for $Z$, it follows by the results of Blumenthal and Getoor \cite{blumenthal2007markov} that there exists a continuous, non-decreasing, perfect, adapted functional $L$ with support $\{0\}$. We refer to this process as the local time of $Z$ at the origin. Moreover, the results of \cite{blumenthal2007markov}  also guarantee that $L$ is uniquely determined up to a multiplicative constant.  As in \cite{williams1987local}, we assume for the remainder of the paper that the multiplicative constant is chosen such that
\begin{eqnarray*}
E^0 \left[ \int_0^{\infty}e^{-t}dL(t) \right] &=&1.
\end{eqnarray*}

Now note that since $1 < \alpha < 2$, it is straightforward to show using Theorem 2.2 of Varadhan and Williams \cite{varadhan1985brownian}, together with the strong Markov property \cite{varadhan1985brownian} of $Z$, that $L(t) \rightarrow \infty$ as $t \rightarrow \infty$, $P^z$-a.s. for each $z \in $S. Hence, we may $P^z$-a.s. define the inverse local time process
\begin{eqnarray}
L^{-1}(a)&=&\inf\{t \geq 0 : L(t) > a\},~~a \geq 0, \label{def:inverse:local:time}
\end{eqnarray}
and we set $L^{-1}=\{L^{-1}(a), a \geq 0\}$. By Corollary 2.7 of Williams \cite{williams1987local}, under $P^0$, $L^{-1}$ is a stable subordinator of index $\alpha/2$. In particular, $\ln E^0[e^{-\lambda L^{-1}(a)}]=-a\lambda^{\alpha/2}$ for $a, \lambda  \geq 0$. Hence, under $P^0$, $L^{-1}$ is a strictly increasing, right-continuous, pure jump process (see Bertoin \cite{bertoin1999subordinators}), and so we have that $P^0$-a.s.,
\begin{eqnarray*}
L^{-1}(a) &=& \sum_{0 \leq \kappa \leq a} \Delta L^{-1}(\kappa),~a \geq 0, \end{eqnarray*}
where
\begin{eqnarray*}
\Delta L^{-1}(\kappa) &=& L^{-1}(\kappa)-L^{-1}(\kappa -),~\kappa \geq 0,
\end{eqnarray*}
with the convention that $L^{-1}(0-)=0$. In particular, whenever $\Delta L^{-1}(\kappa)\not= 0$, then it is straightforward to show that $\Delta L^{-1}(\kappa)=D_i-G_i$ for the unique value of $i\ge 1$ such that $L(G_i)=L(D_i)=\kappa$. Thus, we also have that $P^0$-a.s.,
\begin{equation}
 L^{-1}(a)~=~\sum_{i:L(G_i)\le a} (D_i-G_i),~a \geq 0.\label{l:inverse}\end{equation}
Finally, we note that using arguments similar to the proof of Corollary 5.11 (page 411) of \c{C}inlar \cite{cinlar2011probability}, it may be shown that $P^0$-a.s., $L^{-1}(\mathbb{R}_{+}) \subset \Lambda $ with
\begin{eqnarray}
\Lambda \backslash  L^{-1}(\mathbb{R}_{+})&=& \bigcup_{i=1}^{\infty}\{G_i\}. \label{Display:Disjoint:Intervals}
\end{eqnarray}

\section{Identification of $X$ and The Standard Skorokhod Problem}
\label{Section:Identification:of:X:and:Skorokhod:Problem}

In this section, we prove the existence of a 2-dimensional process  $X$ on $(C_s, {\mathcal F}, {\mathcal F}_t)$ which is a standard 2-dimensional Brownian motion started from $z$ on $(C_S,\mathcal{F},\mathcal{F}_t,P^z)$ for each $z \in S$. Moreover, letting $Y=Z-X$, we show that over each excursion interval $[G_i,D_i]$ for $i\ge 1$ (and if $z\not= 0$, then also over the interval from zero to the first time that $Z$ hits the origin), the pair of processes $(Z,Y)$ solve the standard Skorokhod problem for $X$ given by  Definition \ref{Definition:Skorohod:Problem} below.
In Section \ref{section:dirichlet:process}, it is proven
that the process $Y$ is of zero energy on $(C_S,\mathcal{F},\mathcal{F}_t,P^z)$ for each $z \in S$, and hence $(X,Y)$
is the desired Doob-Meyer type decomposition of $Z$ given by Theorem \ref{Theorem:Main:Dirichlet:Process:Result}. We also note that our construction of the process $X$ is similar to that of Kang and Ramanan \cite{kang2014submartingale}, but differs in an important way  in order to allow us to prove our results related the Skorokhod problem.

We now recall the definition of the standard Skorokhod problem (SP) as given in Ramanan \cite{ramanan2006reflected}.
Recall first from Section \ref{Section:Extended:Skorokhod:Map:Results} the definition of $D(\mathbb{R}_{+},\mathbb{R}^2)$ as the space of $\mathbb{R}^2$-valued functions, with domain $\mathbb{R}_{+}$, that are right-continuous with left limits, and the definition of $D_S(\mathbb{R}_{+},\mathbb{R}^2)$ as the set of $f \in D(\mathbb{R}_{+},\mathbb{R}^2)$ such that $f(0) \in S$. Also, let $\mathcal{B}V(\mathbb{R}_{+},\mathbb{R}^2) \subset
D(\mathbb{R}_{+},\mathbb{R}^2)$ denote the subset of functions in $D(\mathbb{R}_{+},\mathbb{R}^2)$ which have finite variation on each bounded interval. For each $f \in \mathcal{B}V(\mathbb{R}_{+},\mathbb{R}^2)$
and $t \geq 0$, we denote the variation of $f$ on $[0,t]$ by $\bar f(t)$, which is a shorthand notation for $V_1(f,[0,t])$. Next, recall from \eqref{Display:Definition:Of:Direction:Of:Reflection} of Section \ref{Section:Extended:Skorokhod:Map:Results}, the definition of the set-valued mapping $d(\cdot)$ defined on $S$,
and, for each $z \in S$, let $d^1(z)$ denote the intersection of $d(z)$ with the unit sphere $S_1(0)$ in $\mathbb{R}^2$ which is centered at the origin. We then recall from Ramanan \cite{ramanan2006reflected} the definition
of the standard Skorokhod problem (SP).

\begin{definition}\label{Definition:Skorohod:Problem}
The pair of processes $(\phi,\eta) \in D_S(\mathbb{R}_{+},\mathbb{R}^2) \times D(\mathbb{R}_{+},\mathbb{R}^2)$ solve the SP $(S,d(\cdot))$ for $\psi \in D_S(\mathbb{R}_{+},\mathbb{R}^2)$ if $\phi(0)=\psi(0)$, and if for all $t \in \mathbb{R}_{+}$,
the following properties hold,
\begin{enumerate}
\item~$\phi(t) ~=~ \psi(t) + \eta(t)$,
\item~$\phi(t) \in S$,
\item ~$ \eta\in \mathcal{B}V(\mathbb{R}_{+},\mathbb{R}^2) $,
\item ~$    \bar\eta(t)=  \int_{[0,t]}1\{\phi(s) \in \partial S\}d\bar\eta(s)$  \label{Definition:SP:Item:Four}
\item There exists a measurable function $\gamma:\mathbb{R}_{+} \mapsto S_1(0)$ such that $d\bar\eta$-almost everywhere we have that $\gamma(t) \in d^1(\phi(t))$
and
\begin{eqnarray*}
\eta(t) &=& \int_0^t \gamma(s) d\bar\eta(s).
\end{eqnarray*}\label{Definition:SP:Item:Five}
\end{enumerate}
\end{definition}

Our main result of this section is the following.

\begin{theorem}\label{Proposition:Skorokhod:Problem}Suppose that $1 < \alpha < 2$.
\begin{enumerate}
\item \label{Proposition:Skorohod:Problem:Part:1}

There exists a 2-dimensional process $X$ on $(C_s, {\mathcal F}, {\mathcal F}_t)$ such that $X$ is a standard 2-dimensional Brownian motion started from $z$ on $(C_S,\mathcal{F},\mathcal{F}_t,P^z)$ for each $z \in S$.

\item \label{Proposition:Skorohod:Problem:Part:2}

Let $X$ be as above, $Y=Z-X$, and  let $V$ in \eqref{Display:Definition:Of:Direction:Of:Reflection} be an arbitrary, non-empty closed convex cone in ${\mathbb R}^2$.
Then, for each $z \in S$, $P^z$-a.s. the pair $(Z((G_i+\cdot)\wedge D_i),Y  ((G_i+\cdot)\wedge D_i)-Y(G_i) )$ solves the SP $(S,d(\cdot))$ for $X  ((G_i+\cdot)\wedge D_i) +Y(G_i)$ for each $i \geq 1$. In addition,    for each $z\in S\setminus\{0\}$, $P^z$-a.s. the pair  $(Z(\cdot\wedge \tau_0), Y(\cdot\wedge \tau_0))$ solves the SP $(S,d(\cdot))$ for $X  (\cdot\wedge \tau_0)$.

\end{enumerate}
\end{theorem}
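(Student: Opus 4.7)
The plan is to build the pair $(X,Y)$ by piecing together the semimartingale decompositions that Williams \cite{williams1985reflected} furnishes on intervals on which $Z$ is bounded away from the origin, and then to read off the Skorokhod problem on each such piece. Recall from \cite{williams1985reflected} that on any stopping-time interval $[\sigma_1,\sigma_2]$ with $\inf_{t\in[\sigma_1,\sigma_2]}|Z(t)|>0$, the process $Z$ is a continuous semimartingale whose decomposition $Z=X+Y$ has $X$ a two-dimensional Brownian motion and $Y$ a continuous bounded-variation process whose differential is supported on $\{t:Z(t)\in\partial S\}$ and whose direction of push on $\partial S_j$ belongs to $\{av_j:a\ge 0\}$. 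For $z\neq 0$, applying this on $[0,\tau_0]$ (via approximation by stopping times $\tau_0\wedge\inf\{t:|Z(t)|\le\epsilon\}$ and passing $\epsilon\downarrow 0$) produces adapted processes $X,Y$ realizing the decomposition there. For each excursion interval $(G_i,D_i)$, combining the strong Markov property of $Z$ at $G_i$ with Williams' result on compact subintervals $[G_i+\delta,D_i-\delta]$ and passing $\delta\downarrow 0$ produces continuous processes $X^{(i)},Y^{(i)}$ on $[G_i,D_i]$ with $X^{(i)}(G_i)=0=Y^{(i)}(G_i)$ and $Z-Z(G_i)=X^{(i)}+Y^{(i)}$.

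Next I would define $X$ globally by setting $X(0)=z$, taking $X$ on $[0,\tau_0]$ to be the Brownian part of the stopped decomposition (trivial if $z=0$), and for $t>\tau_0$ accumulating the excursion increments
\[
X(t)-X(\tau_0)=\sum_{i:D_i\le t}X^{(i)}(D_i)+\mathbf{1}\{t\in(G_{i(t)},D_{i(t)})\}X^{(i(t))}(t),
\]
where $i(t)$ denotes the index of the in-progress excursion. Put $Y=Z-X$. Since by Condition~\ref{Condition:3:Submartingale} of Definition~\ref{vw} the set $\Lambda$ has Lebesgue measure zero and $X^{(i)}$ vanishes at $G_i$ and $D_i$, the construction forces $X$ to be continuous across $\Lambda$ as well.

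The hard part is verifying that the globally defined $X$ is a genuine standard two-dimensional Brownian motion under each $P^z$. I would use L\'evy's characterization: continuity is already in hand, so the goal is to show $X$ is a continuous local martingale with cross-variation $\langle X^{(k)},X^{(l)}\rangle_t=\delta_{kl}\,t$. An approach is to truncate by an increasing sequence of stopping times $T_\epsilon^n$ that alternately leave and return to a ball of radius $\epsilon$ around the origin, so that between consecutive such times Williams' semimartingale decomposition applies directly; iterating and applying the strong Markov property at each $T_\epsilon^n$ identifies $X$ up to $T_\epsilon^n$ as a genuine Brownian increment. Letting $n\to\infty$ and then $\epsilon\downarrow 0$, one argues that the contribution to $X$ from small excursions (whose total Lebesgue length vanishes with $\epsilon$) and from $\Lambda$ is negligible, exploiting the excursion-theoretic Poisson structure carried by the stable subordinator $L^{-1}$ of index $\alpha/2$ recalled in Section~\ref{Section:Excursion:Theory:Background}. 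Independence of increments across distinct excursions then follows from the strong Markov property at the endpoints $G_i,D_i$.

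Once the Brownian identification is complete, assertion~\ref{Proposition:Skorohod:Problem:Part:2} is essentially immediate from the construction. For each $i\ge 1$, the pair $(Z((G_i+\cdot)\wedge D_i),\,Y((G_i+\cdot)\wedge D_i)-Y(G_i))$ satisfies conditions 1 and 2 of Definition~\ref{Definition:Skorohod:Problem} because $Z=X+Y$ and $Z\in S$, and it satisfies conditions 3, 4, and 5 because $Y^{(i)}$ is precisely the bounded-variation pushing process from Williams' decomposition, which has the required support-on-$\partial S$ and admissible-direction properties by construction. The analogous statement on $[0,\tau_0]$ for $z\neq 0$ holds by the same reasoning, completing the proof.
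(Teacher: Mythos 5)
Your outline founders at the point where you define $X$ globally by summing the excursion increments $\sum_{i:D_i\le t}X^{(i)}(D_i)$. The excursion lengths $D_i-G_i$ are the jumps of the inverse local time $L^{-1}$, which (Section \ref{Section:Excursion:Theory:Background}) is a stable subordinator of index $\alpha/2\in(1/2,1)$; consequently $\sum_i (D_i-G_i)^q<\infty$ only for $q>\alpha/2$, and in particular $\sum_i (D_i-G_i)^{1/2}=\infty$ a.s. when $\alpha>1$. Since the Brownian increment over an excursion of length $\ell$ has magnitude of order $\sqrt{\ell}$, your sum is not absolutely convergent, and because the excursions are densely interleaved in time there is no canonical conditionally convergent ordering either. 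For the same reason the assertion that ``the construction forces $X$ to be continuous across $\Lambda$'' does not follow: continuity of $X$ at an accumulation point of $\Lambda$ requires the tail sums of excursion increments to vanish, which is exactly the unproved convergence statement; the fact that $\Lambda$ has Lebesgue measure zero is irrelevant to it. This is not a technicality one can defer to the ``hard part'': without a convergent definition of $X$ there is nothing to which L\'evy's characterization can be applied. The paper's mechanism is genuinely different: it excises neighborhoods of the \emph{boundary} $\partial S$ (not of the origin), forming martingales $W^{\delta}$ from the pieces of $Z$ spent in the translated wedges $S_\delta$, proves these form a Cauchy sequence in the complete space $\Upsilon_2^{c,z}$ using $E^z\int_0^T 1\{Z(t)\in\partial S\}\,dt=0$, and identifies the $L^2$-limit as a Brownian motion via the martingale invariance principle; a further lemma of \c{C}inlar--Jacod--Protter--Sharpe is needed to produce a single version of $X$ on $(C_S,\mathcal F,\mathcal F_t)$ valid for every $P^z$ simultaneously, a point your construction also does not address.

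Your treatment of Part \ref{Proposition:Skorohod:Problem:Part:2} inherits the same difficulty in reverse. In the paper, $X$ is defined as an abstract $L^2$-limit, so the substantive work (the functional $F_\gamma$, Lemma \ref{sub}, Propositions \ref{williams}, \ref{p3} and \ref{main}) is to show that this global $X$ agrees pathwise, on $[0,\tau_0]$ and on each excursion interval, with Williams' local decomposition, using the strong Markov property at the shifted times $U_k^n+1/n$ and a measurability argument for the class $\mathcal A$. You build that agreement in by fiat, but only by pushing all of the difficulty into the undefined global sum. Even granting the identification, conditions \ref{Definition:SP:Item:Four} and \ref{Definition:SP:Item:Five} of Definition \ref{Definition:Skorohod:Problem} are not quite automatic: one still needs the identity $V_1(Y,[G_i,t])=\|v_1\|(u_1(t)-u_1(G_i))+\|v_2\|(u_2(t)-u_2(G_i))$, which the paper proves by a partition argument exploiting that $Z$ cannot visit both edges in a short subinterval of the open excursion; your sketch omits this.
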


\noindent The two parts of Theorem \ref{Proposition:Skorokhod:Problem} are proven respectively in Sections \ref{Subsection:Proof:That:X:Exists} and \ref{Subsection:Semimartingale:Decomposition:Of:X:On:Excursion:Intervals} below. Also note that $d(0)=V$ in the definition of $d(\cdot)$, and the selection of $V$ turns out to be important in Theorem \ref{necessary}.
 However, since $Z$ does not reach the origin on $(G_i,D_i)$, or on $[0,\tau_0)$ if $z\not= 0$, the selection of $V$ in the above theorem is irrelevant.

\subsection{Proof of Part \ref{Proposition:Skorohod:Problem:Part:1} of Theorem  \ref{Proposition:Skorokhod:Problem} }\label{Subsection:Proof:That:X:Exists}

For each $ \delta > 0$, let $S_\delta \subset S$
be the closed set defined in polar coordinates by $S_\delta = S + \delta e^{i \xi/2}$. Next, set $\tau_0^{\delta}=0$, and, for each $k \geq 1$, recursively define
\begin{eqnarray*}
\sigma^{\delta}_k &=& \inf\{t \geq \tau^{\delta}_{k-1} : Z(t) \in S_{2 \delta}\}~~\textrm{and}~~\tau^{\delta}_k ~=~ \inf\{t \geq \sigma^{\delta}_{k} : Z(t) \in \partial S_{\delta}\}.
\end{eqnarray*}
Since $Z(\cdot,\omega)$ is  continuous for every $\omega\in C_S$, we may equivalently set $\tau^{\delta}_k ~=~ \inf\{t \geq \sigma^{\delta}_{k} : Z(t) \in S_{\delta}^c \cap S^{o}\}$,
where $S_{\delta}^c$ denotes the complement of $S_{\delta}$ in $S$, and $S^{o}$ is the interior of $S$. It is immediate
by Proposition 2.1.5 of Ethier and Kurtz \cite{EK86} that $\sigma^{\delta}_k$
and $\tau^{\delta}_k$ are, for each $k \geq 1$, stopping times relative to the right-continuous filtration $\{\mathcal{F}_t, t \geq 0\}$.

Now, for each $k \geq 1$, let $W_{(k)}^{\delta}$ be the process defined by setting
\begin{eqnarray*}
W_{(k)}^{\delta}(t)&=& Z(t \wedge \tau_{k}^{\delta})-Z(t \wedge \sigma_{k}^{\delta}),~t \geq 0,
\end{eqnarray*}
and then define the process $W^{\delta}$ by setting
\begin{eqnarray*}
W^{\delta}(t) &=& \sum_{k=1}^{\infty}W_{(k)}^{\delta}(t),~t \geq 0.
\end{eqnarray*}

Recall next from Section \ref{Section:Introduction} the definition of the augmented sigma fields ${\mathcal M}^z$ and ${\mathcal M}_t^z$ for $z\in S$ and $t \geq 0$. For each $z \in S$, let $\Upsilon_2^{c,z}$ denote the space of continuous, square-integrable martingales $M$ with time horizon $\mathbb{R}_{+}$ on $(C_S,\mathcal{M}^z,\mathcal{M}^z_t,P^z)$, and such that $M(0)=0$.  It is well-known (see for instance Proposition
1.5.23 of Karatzas and Shreve \cite{KaratzasShreve})  that the space $\Upsilon_2^{c,z}$ is complete under the norm
\begin{eqnarray}
\| M \|_z &=& \sum_{n=1}^{\infty} \frac{1}{2^n}\left(\sqrt{E^z[M^2(n)]} \wedge 1\right),~ M \in \Upsilon_2^{c,z}. \label{norm:def}
\end{eqnarray}

Now, for each $\delta > 0$ and $k \geq 1$ and $i=1,2,$ denote the $i$th component process of $W_{(k)}^{\delta}$ by $W_{(k),i}^{\delta}$.  We begin our proof of Part \ref{Proposition:Skorohod:Problem:Part:1} of Theorem \ref{Proposition:Skorokhod:Problem} by recalling a result from Kang and Ramanan \cite{kang2014submartingale}.

\begin{lemma}\label{Lemma:Xk:Martingale}
For each $\delta > 0$, $k \geq 1,i=1,2$, and $z\in S$, the process $W_{(k),i}^{\delta}  \in \Upsilon_2^{c,z}$  with quadratic variation processes
\begin{eqnarray*}
\langle W_{(k),i}^\delta \rangle_t &=& (t \wedge \tau_k^\delta) - (t \wedge \sigma_k^\delta)~,t \geq 0,
\end{eqnarray*}
for $i=1,2,$ and $\langle W_{(k),1}^\delta,W_{(k),2}^\delta \rangle_t = 0, t \geq 0$.
\end{lemma}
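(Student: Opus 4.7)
The plan is to derive both the martingale property and the quadratic-variation identities directly from the submartingale characterization of $Z$ by plugging in suitable test functions that agree with the coordinate (and product-of-coordinate) functions on the region $S_\delta$ but vanish in a neighborhood of $\partial S\cup\{0\}$. The key structural observation is that on the random interval $[\sigma_k^\delta,\tau_k^\delta]$ the process $Z$ is confined to $S_\delta$, a region strictly separated from the two edges and from the vertex, so the reflection mechanism is inactive there and $Z$ should behave like unconstrained two-dimensional Brownian motion.

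First, choose a smooth cutoff $\chi\in C_b^\infty(\mathbb{R}^2)$ with $\chi\equiv 1$ on $S_\delta$ and with $\chi$ (hence also $\nabla\chi$) vanishing in an open neighborhood of $\partial S\cup\{0\}$. Then the functions $f_i(z)=\chi(z)z_i$, $g_i(z)=\chi(z)z_i^2$, and $h(z)=\chi(z)z_1z_2$ all lie in $C_b^2(S)$ (after an additional large-ball spatial truncation, see below), are constant ($=0$) in a neighborhood of the origin, and satisfy $D_jf_i=D_jg_i=D_jh=0$ on $\partial S_j\setminus\{0\}$ because both $\chi$ and $\nabla\chi$ vanish there. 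Applying Condition \ref{Condition:2:Submartingale} of Definition \ref{vw} both to each of $f_i,g_i,h$ and to their negatives promotes the submartingales to martingales: under $P^z$,
$$M_i(t):=f_i(Z(t))-\tfrac12\int_0^t\!\Delta f_i(Z(s))\,ds,\qquad N_i(t):=g_i(Z(t))-\tfrac12\int_0^t\!\Delta g_i(Z(s))\,ds,$$
and the analogous process built from $h$, are all martingales (with respect to the augmented filtration $\mathcal{M}_t^z$).

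Next, apply optional stopping at $\sigma_k^\delta$ and $\tau_k^\delta$. On $[\sigma_k^\delta,\tau_k^\delta]$ one has $\chi(Z)\equiv 1$, so $f_i(Z)=Z_i$, $g_i(Z)=Z_i^2$, $h(Z)=Z_1Z_2$, while $\Delta f_i\equiv 0$, $\Delta g_i\equiv 2$, $\Delta h\equiv 0$. The stopped differences $M_i(\cdot\wedge\tau_k^\delta)-M_i(\cdot\wedge\sigma_k^\delta)$ therefore equal $W_{(k),i}^\delta$, giving the continuous martingale property. Similarly, $N_i(\cdot\wedge\tau_k^\delta)-N_i(\cdot\wedge\sigma_k^\delta)$ reduces, after elementary algebra, to $(W_{(k),i}^\delta(t))^2-[(t\wedge\tau_k^\delta)-(t\wedge\sigma_k^\delta)]$, which identifies $\langle W_{(k),i}^\delta\rangle_t$. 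A polarization argument using $h$ gives that $W_{(k),1}^\delta W_{(k),2}^\delta$ is a martingale, so $\langle W_{(k),1}^\delta,W_{(k),2}^\delta\rangle_t=0$.

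The chief technical obstacle is twofold. First, $\chi$ must vanish in a whole \emph{neighborhood} of $\partial S\cup\{0\}$, not merely on it, in order for the directional derivatives to be identically zero (and thus for both $f$ and $-f$ to be admissible test functions in the submartingale problem). Second, the coordinate functions are unbounded, so to ensure the test functions lie in $C_b^2(S)$ and to secure square-integrability of $W_{(k),i}^\delta(t)$ for each fixed $t$, an additional spatial truncation must be introduced: work with $\chi_R$ supported in a ball of radius $R$ and agreeing with $\chi$ on $S_\delta\cap B_R$, then let $R\to\infty$ using continuity of $Z$ and dominated convergence. Beyond these bookkeeping points, the argument is a direct adaptation of the corresponding computation in Kang and Ramanan \cite{kang2014submartingale}, which may simply be cited to conclude.
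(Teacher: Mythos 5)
Your proposal is correct and follows essentially the same approach as the paper, which simply cites Lemma~5.1 of Kang and Ramanan \cite{kang2014submartingale}: your sketch faithfully reconstructs the test-function argument underlying that lemma, applying Condition~\ref{Condition:2:Submartingale} of Definition~\ref{vw} to cutoff-modified coordinate, squared-coordinate, and product functions and then optionally stopping at $\sigma_k^\delta,\tau_k^\delta$. The two bookkeeping points you flag (requiring $\chi$ to vanish on a full neighborhood of $\partial S\cup\{0\}$ so that both $\pm f$ are admissible, and introducing a spatial truncation $\chi_R$ to secure boundedness and square-integrability before passing $R\to\infty$) are exactly the issues that need care, and your handling of them is sound.
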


\begin{proof}
This follows from Lemma 5.1 in \cite{kang2014submartingale}.
\end{proof}

Next, we prove that $W_{(k)}^{\delta}$ and $W_{(\ell)}^{\delta}$ and orthogonal to one another for $k \neq \ell$.

\begin{lemma}\label{Lemma:Xk:Xl:Orthogonal}For each $\delta > 0$ and $k, \ell \geq 1$ with $k \neq \ell$, we have under $P^z$ for every $z\in S$ that
\begin{eqnarray*}
\langle W_{(k),i}^{\delta},W_{(\ell),j}^{\delta} \rangle_t &=& 0,~ t \geq 0,
\end{eqnarray*}
for $i,j=1,2$.
\end{lemma}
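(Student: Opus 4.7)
The plan is to assume without loss of generality that $k < \ell$ and exploit the disjoint support of the two martingales in time. By the recursive construction of the stopping times, we have $\sigma_k^\delta \le \tau_k^\delta \le \sigma_{k+1}^\delta \le \cdots \le \sigma_\ell^\delta \le \tau_\ell^\delta$. Consequently, the process $W_{(k),i}^\delta(t) = Z_i(t\wedge\tau_k^\delta) - Z_i(t\wedge\sigma_k^\delta)$ is identically zero on $[0,\sigma_k^\delta]$ and constant, equal to the $\mathcal{F}_{\tau_k^\delta}$-measurable random variable $W_{(k),i}^\delta(\tau_k^\delta)$, on $[\tau_k^\delta,\infty)$; symmetrically, $W_{(\ell),j}^\delta$ vanishes identically on $[0,\sigma_\ell^\delta]$ and is constant on $[\tau_\ell^\delta,\infty)$. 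In particular, whenever $W_{(k),i}^\delta$ is ``moving'' (on $(\sigma_k^\delta,\tau_k^\delta]$) the process $W_{(\ell),j}^\delta$ is still zero, and whenever $W_{(\ell),j}^\delta$ is ``moving'' (on $(\sigma_\ell^\delta,\tau_\ell^\delta]$) the process $W_{(k),i}^\delta$ has already frozen at $W_{(k),i}^\delta(\tau_k^\delta)$.

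Next, I would apply the integration-by-parts formula for continuous square-integrable martingales (both processes are in $\Upsilon_2^{c,z}$ by Lemma \ref{Lemma:Xk:Martingale}) to write
\begin{equation*}
W_{(k),i}^\delta(t)\,W_{(\ell),j}^\delta(t) \;=\; \int_0^t W_{(\ell),j}^\delta(s)\,dW_{(k),i}^\delta(s) \;+\; \int_0^t W_{(k),i}^\delta(s)\,dW_{(\ell),j}^\delta(s) \;+\; \langle W_{(k),i}^\delta, W_{(\ell),j}^\delta\rangle_t.
\end{equation*}
The first stochastic integral vanishes identically: by Lemma \ref{Lemma:Xk:Martingale} the measure $d\langle W_{(k),i}^\delta\rangle_s$ is supported on $[\sigma_k^\delta,\tau_k^\delta]$, and on this set the integrand $W_{(\ell),j}^\delta(s)$ is zero. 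The second integral can be evaluated by pulling the $\mathcal{F}_{\tau_k^\delta}$-measurable random variable $W_{(k),i}^\delta(\tau_k^\delta)$ out of the integral on $(\sigma_\ell^\delta,\tau_\ell^\delta]$ (where the integrand is constant and equal to this value), giving $W_{(k),i}^\delta(\tau_k^\delta)\,W_{(\ell),j}^\delta(t)$.

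Comparing with the left-hand side now closes the argument: for $t\ge\tau_k^\delta$ one has $W_{(k),i}^\delta(t)=W_{(k),i}^\delta(\tau_k^\delta)$, so the left-hand side equals $W_{(k),i}^\delta(\tau_k^\delta)W_{(\ell),j}^\delta(t)$ and therefore $\langle W_{(k),i}^\delta, W_{(\ell),j}^\delta\rangle_t = 0$; for $t<\tau_k^\delta\le \sigma_\ell^\delta$ both sides are zero because $W_{(\ell),j}^\delta(t)=0$, again yielding $\langle W_{(k),i}^\delta, W_{(\ell),j}^\delta\rangle_t = 0$. This proves the lemma.

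The only real obstacle is bookkeeping — making sure the two stochastic integrals are handled with the correct measurability and integrability, and that the ``constant'' integrand $W_{(k),i}^\delta(\tau_k^\delta)$ is legitimately pulled outside the integral against $dW_{(\ell),j}^\delta$. This is routine since $W_{(k),i}^\delta(\tau_k^\delta)\mathbf{1}_{(\tau_k^\delta,\infty)}$ is predictable and bounded by the $L^2$-norm from Lemma \ref{Lemma:Xk:Martingale}; alternatively, one could bypass integration by parts entirely by invoking the Kunita–Watanabe inequality, which forces $|d\langle W_{(k),i}^\delta,W_{(\ell),j}^\delta\rangle|$ to be supported on both $[\sigma_k^\delta,\tau_k^\delta]$ and $[\sigma_\ell^\delta,\tau_\ell^\delta]$ — hence on a $P^z$-null set — and conclude via continuity of the covariation together with $\langle W_{(k),i}^\delta,W_{(\ell),j}^\delta\rangle_0=0$.
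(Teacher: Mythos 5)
Your proof is correct, and it rests on exactly the same key observation as the paper's: $W_{(k),i}^{\delta}$ is flat outside $[\sigma_k^{\delta},\tau_k^{\delta}]$, $W_{(\ell),j}^{\delta}$ is flat outside $[\sigma_{\ell}^{\delta},\tau_{\ell}^{\delta}]$, and these intervals are disjoint (indeed separated by a gap, since $Z(\tau_k^{\delta})\in\partial S_{\delta}\not\subset S_{2\delta}$ forces $\sigma_{\ell}^{\delta}>\tau_k^{\delta}$). Where you differ is in how you convert that observation into the vanishing of the bracket. The paper takes the most elementary route: it realizes $\langle W_{(k),i}^{\delta},W_{(\ell),j}^{\delta}\rangle_t$ as the limit in probability of the discrete sums $\sum_m (W_{(k),i}^{\delta}(t^n_m)-W_{(k),i}^{\delta}(t^n_{m-1}))(W_{(\ell),j}^{\delta}(t^n_m)-W_{(\ell),j}^{\delta}(t^n_{m-1}))$ and notes that every term is zero as soon as $\|\pi_n\|<\sigma_{\ell}^{\delta}-\tau_k^{\delta}$, since then no partition cell can meet both active intervals. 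You instead run the argument through It\^{o} calculus: integration by parts plus the identification of the two stochastic integrals (the first killed because its quadratic variation $\int (W_{(\ell),j}^{\delta})^2\,d\langle W_{(k),i}^{\delta}\rangle$ vanishes, the second evaluated by pulling out the frozen $\mathcal{F}_{\tau_k^{\delta}}$-measurable constant), or alternatively the Kunita--Watanabe inequality, which shows the total variation measure of the bracket is dominated by measures with disjoint supports. All three arguments are valid; the paper's is shorter and needs nothing beyond the definition of the covariation, while your Kunita--Watanabe variant is arguably the cleanest conceptually (it makes no use of the gap between the intervals, only of their Lebesgue-a.e.\ disjointness) and your integration-by-parts variant carries the small bookkeeping burden you yourself flag, namely justifying the extraction of the random constant from the stochastic integral.
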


\begin{proof}Let $\delta > 0$ and assume without loss of generality that $\ell > k \geq 1$. The result now follows from the fact that $W_{(k)^,i}^\delta$ is flat outside of $(\sigma_k^\delta,\tau_k^\delta)$ for $i=1,2,$ and
$W_{(\ell),i}^\delta$ is flat outside of $(\sigma_{\ell}^\delta,\tau_{\ell}^\delta)$ for $i=1,2,$ together with the fact
that $(\sigma_k^\delta,\tau_k^\delta) \cap (\sigma_{\ell}^\delta,\tau_{\ell}^\delta) = \emptyset $.
Indeed, let $\{\pi_n=\{0=t^n_0 < t^n_1 < ... < t^n_{s(n)}=t\}, n \geq 1 \}$ be such that $\| \pi_n \| \rightarrow 0$ as $n \rightarrow \infty$,
and let
\begin{eqnarray*}
C_{k,i,\ell,j}(\pi_n) &=& \sum_{m=1}^{s(n)}(W_{(k),i}^\delta(t^n_m)-W_{(k),i}^\delta(t^n_{m-1}))(W_{(\ell),j}^\delta(t^n_m)-W_{(\ell),j}^\delta(t^n_{m-1}))
\end{eqnarray*}
for $i,j=1,2$. Then, $C_{k,i,\ell,j}(\pi_n)=0$ whenever $\| \pi_n \| < \sigma_l^\delta - \tau_k^\delta$. Thus, $P^z$-a.s. for each $z \in S$ we have $C_{k,i,\ell,j}(\pi_n) \rightarrow 0$
as $n \rightarrow \infty$.
\end{proof}

\begin{lemma}\label{Wdelta}
For each $z\in S$,\ $\delta>0$ and $i=1,2,$ we have that $W_i^\delta\in\Upsilon^{c,z}_2$ with quadratic and cross variations
\begin{equation}\langle W^{\delta}_i \rangle_t = \sum_{k=1}^{\infty}((t \wedge \tau_k^\delta)-(t \wedge \sigma_k^\delta)),~t \geq 0,\label{qv}\end{equation}
and $\langle W_1^\delta,X_2^\delta\rangle_t=0,t \geq 0$.
\end{lemma}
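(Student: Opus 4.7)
The plan is to realize $W_i^\delta$ as the limit in the complete Hilbert space $(\Upsilon_2^{c,z},\|\cdot\|_z)$ of its partial sums $W_i^{\delta,N}:=\sum_{k=1}^N W_{(k),i}^\delta$, and then read off the quadratic and cross variations by passing to the limit in the identities that hold at the level of partial sums.

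First, I would note that on any fixed interval $[0,t]$ only finitely many of the intervals $(\sigma_k^\delta,\tau_k^\delta)$ can meet $[0,t]$: each such excursion forces $Z$ to travel the positive distance between $S_{2\delta}$ and $\partial S_\delta$, which the continuous path $Z(\cdot,\omega)$ cannot do infinitely often in finite time. Hence $W^\delta(t)=\sum_k W_{(k)}^\delta(t)$ is in fact a finite sum for every $(t,\omega)$, and $W^\delta$ is a well-defined continuous process.

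Next, by Lemma \ref{Lemma:Xk:Martingale} each $W_{(k),i}^\delta\in\Upsilon_2^{c,z}$, so $W_i^{\delta,N}\in\Upsilon_2^{c,z}$ as a finite sum. Combining Lemma \ref{Lemma:Xk:Martingale} and Lemma \ref{Lemma:Xk:Xl:Orthogonal} via bilinearity of the quadratic variation yields
\begin{equation*}
\langle W_i^{\delta,N}\rangle_t\;=\;\sum_{k=1}^{N}\bigl((t\wedge\tau_k^\delta)-(t\wedge\sigma_k^\delta)\bigr),
\end{equation*}
which is non-decreasing in $N$ and, by disjointness of the intervals $(\sigma_k^\delta,\tau_k^\delta)$, bounded above by $t$. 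For $M>N$ and any $n\in\mathbb{N}$, the same bilinearity gives
\begin{equation*}
E^z\bigl[(W_i^{\delta,M}(n)-W_i^{\delta,N}(n))^2\bigr]\;=\;\sum_{k=N+1}^{M}E^z\bigl[(n\wedge\tau_k^\delta)-(n\wedge\sigma_k^\delta)\bigr],
\end{equation*}
and monotone convergence together with the bound by $n$ forces this tail to vanish as $N\to\infty$. Thus $\{W_i^{\delta,N}\}_{N\ge 1}$ is Cauchy in $(\Upsilon_2^{c,z},\|\cdot\|_z)$, and by completeness it converges to some $\widetilde W_i^\delta\in\Upsilon_2^{c,z}$. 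Passing to a subsequence along which the convergence is uniform on $[0,n]$ in $L^2(P^z)$ and using the pointwise identity $W_i^{\delta,N}(t)\to W_i^\delta(t)$ identifies $\widetilde W_i^\delta$ with $W_i^\delta$ up to $P^z$-indistinguishability. Formula \eqref{qv} then follows by letting $N\to\infty$ in the display for $\langle W_i^{\delta,N}\rangle_t$: the left-hand side converges in $L^1(P^z)$ (since $\langle\cdot\rangle$ is continuous on $\Upsilon_2^{c,z}$), while the right-hand side converges monotonically to $\sum_{k=1}^{\infty}((t\wedge\tau_k^\delta)-(t\wedge\sigma_k^\delta))$.

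For the cross variation, the same partial-sum device yields, by bilinearity,
\begin{equation*}
\langle W_1^{\delta,N},W_2^{\delta,N}\rangle_t\;=\;\sum_{k=1}^{N}\langle W_{(k),1}^\delta,W_{(k),2}^\delta\rangle_t+\sum_{\substack{k,\ell\le N\\ k\neq\ell}}\langle W_{(k),1}^\delta,W_{(\ell),2}^\delta\rangle_t,
\end{equation*}
and every term on the right is zero by Lemma \ref{Lemma:Xk:Martingale} (diagonal) and Lemma \ref{Lemma:Xk:Xl:Orthogonal} (off-diagonal). Passing to the $\Upsilon_2^{c,z}$-limit in each coordinate gives $\langle W_1^\delta,W_2^\delta\rangle_t=0$. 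The main technical step is the interchange of summation and quadratic variation in the limit, and the only reason this is clean is the a priori bound $\sum_k((t\wedge\tau_k^\delta)-(t\wedge\sigma_k^\delta))\le t$ coming from the disjointness of the excursion intervals, which supplies both the Cauchy property and the dominated/monotone convergence needed to identify the limiting quadratic variation.
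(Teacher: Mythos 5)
Your proof is correct and takes essentially the same route as the paper: the paper's localized process $W^\delta(\cdot\wedge\tau_n^\delta)$ equals your partial sum $W_i^{\delta,N}$ (with $N=n$), so the two arguments manipulate the same objects and rely on the same orthogonality computations and the same a priori bound $\sum_k((t\wedge\tau_k^\delta)-(t\wedge\sigma_k^\delta))\le t$. The only cosmetic difference is in the final step: the paper concludes via local-martingale theory together with the deterministic bound $V(t)\le t$, whereas you run a Cauchy argument in the complete space $\Upsilon_2^{c,z}$ and pass to the limit in the quadratic-variation identity.
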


\begin{proof}

Let $\delta > 0$ and $z \in S$. We first identify the quadratic variation processes of $W^{\delta}$. Note that for each $n \geq 1$ we may write
\begin{eqnarray*}
W^{\delta}(t \wedge \tau_n^\delta) &=& \sum_{k=1}^{n}W_{(k)}^\delta(t),~t \geq 0,
\end{eqnarray*}
which by Lemma \ref{Lemma:Xk:Martingale} implies that $W^{\delta}$ is a local martingale on $(C_S,{\mathcal M},{\mathcal M}_t,P^z)$. Also by Lemma \ref{Lemma:Xk:Martingale} it
follows that $W^{\delta}(\cdot \wedge \tau_n^\delta) \in \Upsilon_2^{c,z}$. For $n \geq 1$ let
\begin{eqnarray*}
V_n(t) &=& \sum_{k=1}^{n}((t \wedge \tau_k^\delta)-(t \wedge \sigma_k^\delta))~~\textrm{and}~~V(t) ~=~ \sum_{k=1}^{\infty}((t \wedge \tau_k^\delta)-(t \wedge \sigma_k^\delta)),~t \geq 0.
\end{eqnarray*}
By Lemmas \ref{Lemma:Xk:Martingale} and \ref{Lemma:Xk:Xl:Orthogonal} and the definitions of $V_n(t)$ and $V(t)$ it now follows that
\begin{eqnarray*}
\langle W^{\delta}_i(\cdot \wedge \tau_n^\delta) \rangle_t &=&\sum_{k=1}^{n} \langle W^{\delta}_{(k),i} \rangle_t ~=~V_n(t)~=~V(t \wedge \tau_n^\delta),~t \geq 0,
\end{eqnarray*}
for $i=1,2,$ and hence $\{(W^{\delta}_i(t \wedge \tau_n^\delta))^2-V(t \wedge \tau_n^\delta), t \geq 0 \}$ is a continuous martingale on  on $(C_S,{\mathcal M},{\mathcal M}_t,P^z)$
which implies \eqref{qv}. Also since $\langle W^{\delta}_i \rangle_t=V(t) \leq t$ we
have that $W^{\delta}_i \in \Upsilon_2^{c,z}$ for $i=1,2$.  {The proof that $\langle W^{\delta}_1,W^{\delta}_2 \rangle_t=0,~t \geq 0$,
follows in a similar manner.}
\end{proof}

Let $\Gamma$ be the set of sequences $\{a(n), n\ge 1\}$ such that $a(n)\downarrow 0$ as $n\to\infty$ and $2a(n+1)<a(n)$ for every $n\ge 1$.

\begin{lemma}
For every  sequence $\{\delta(n),n\ge 1\}\in\Gamma$ we have that  $\{W^{\delta(n)}_i, n\ge 1 \}$ is Cauchy in $\Upsilon^{c,z}_2$ for every $z\in S$ and $i=1,2$.
\end{lemma}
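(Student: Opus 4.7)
The plan is to show that $E^z[(W_i^{\delta(m)}(t) - W_i^{\delta(n)}(t))^2] \to 0$ as $m,n \to \infty$ for each fixed $t$, and then use dominated convergence in the definition \eqref{norm:def} of the norm to conclude. Fix $i\in\{1,2\}$, $z\in S$, and $\{\delta(n)\}\in\Gamma$, and assume without loss of generality that $m>n$, so $\delta(m)<\delta(n)$. Since $\Upsilon_2^{c,z}$ is a vector space, $W_i^{\delta(m)}-W_i^{\delta(n)}\in\Upsilon_2^{c,z}$, and by the It\^o isometry,
\[
E^z\!\left[\bigl(W_i^{\delta(m)}(t)-W_i^{\delta(n)}(t)\bigr)^2\right] = E^z\!\left[\langle W_i^{\delta(m)}-W_i^{\delta(n)}\rangle_t\right].
\]
Writing $A_\ell(t) = \bigcup_{k\ge 1}(\sigma_k^{\delta(\ell)},\tau_k^{\delta(\ell)})\cap [0,t]$, Lemma \ref{Wdelta} gives $\langle W_i^{\delta(\ell)}\rangle_t = |A_\ell(t)|$, the Lebesgue measure of $A_\ell(t)$.

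The first step is the geometric inclusion $A_n(t)\subseteq A_m(t)$ whenever $\delta(m)<\delta(n)$. Since $S_{\delta(m)}\supseteq S_{\delta(n)}$ and $S_{2\delta(m)}\supseteq S_{2\delta(n)}$, for $s\in(\sigma_\ell^{\delta(n)},\tau_\ell^{\delta(n)})$ we have $Z(\sigma_\ell^{\delta(n)})\in S_{2\delta(m)}$ and $Z(s)\in S_{\delta(m)}$. If $j$ is the largest index with $\sigma_j^{\delta(m)}\le \sigma_\ell^{\delta(n)}$, then the fact that $Z$ remains in $S_{\delta(m)}$ throughout $(\sigma_\ell^{\delta(n)},\tau_\ell^{\delta(n)})$ together with the recursive definition of $\sigma_{j+1}^{\delta(m)}$ (which requires $Z$ to exit $S_{\delta(m)}$ and then re-enter $S_{2\delta(m)}$ before $s$) forces $\tau_j^{\delta(m)}>s$, placing $s$ in $(\sigma_j^{\delta(m)},\tau_j^{\delta(m)})\subseteq A_m(t)$.

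The main obstacle is the cross-variation identity $\langle W_i^{\delta(m)},W_i^{\delta(n)}\rangle_t = |A_m(t)\cap A_n(t)| = |A_n(t)|$. I would mirror the strategy of the proof of Lemma \ref{Wdelta}: localize by stopping at $\tau_M^{\delta(m)}$ and $\tau_N^{\delta(n)}$ and use bilinearity of the cross-variation to get
\[
\Bigl\langle W_i^{\delta(m)}(\cdot\wedge\tau_M^{\delta(m)}),\,W_i^{\delta(n)}(\cdot\wedge\tau_N^{\delta(n)})\Bigr\rangle_t=\sum_{k=1}^{M}\sum_{\ell=1}^{N}\langle W_{(k),i}^{\delta(m)},W_{(\ell),i}^{\delta(n)}\rangle_t.
\]
By the inclusion above, each pair $(k,\ell)$ has either $(\sigma_k^{\delta(m)},\tau_k^{\delta(m)})\cap (\sigma_\ell^{\delta(n)},\tau_\ell^{\delta(n)})=\emptyset$, which yields a zero contribution by the same discrete-sum vanishing argument as in Lemma \ref{Lemma:Xk:Xl:Orthogonal}, or $(\sigma_\ell^{\delta(n)},\tau_\ell^{\delta(n)})\subseteq(\sigma_k^{\delta(m)},\tau_k^{\delta(m)})$, in which case on the inner interval both martingales track $Z_i$, a coordinate of a standard planar Brownian motion in the interior of $S$, so the computation in the proof of Lemma \ref{Lemma:Xk:Martingale} gives a contribution of $|(\sigma_\ell^{\delta(n)},\tau_\ell^{\delta(n)})\cap[0,t]|$. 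Summing over $\ell$ and passing to $M,N\to\infty$ (monotone convergence of the quadratic variations, justified as in Lemma \ref{Wdelta}) establishes the identity.

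Combining these,
\[
\langle W_i^{\delta(m)}-W_i^{\delta(n)}\rangle_t = |A_m(t)| + |A_n(t)| - 2|A_n(t)| = |A_m(t)|-|A_n(t)|.
\]
Because $|A_m(t)|$ is $P^z$-a.s.\ non-decreasing in $m$ and bounded above by $t$, it converges a.s.\ to a finite limit, and bounded convergence gives $E^z[|A_m(t)|-|A_n(t)|]\to 0$ as $m,n\to\infty$. A final application of dominated convergence in \eqref{norm:def}, whose summands are bounded by $2^{-k}$, yields $\|W_i^{\delta(m)}-W_i^{\delta(n)}\|_z\to 0$, proving that the sequence is Cauchy.
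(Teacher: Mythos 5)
Your proof is correct, and while it shares the paper's key geometric input, it finishes in a genuinely different way. Both arguments rest on the same nesting property: for $\delta'<\delta$ each interval $[\sigma_\ell^{\delta},\tau_\ell^{\delta}]$ sits inside a unique interval $[\sigma_k^{\delta'},\tau_k^{\delta'}]$, and on the inner interval the two martingales have identical increments. The paper exploits this by splitting $\langle W_i^{\delta(n)}-W_i^{\delta(m)}\rangle_T$ over the coarse intervals (where the difference is constant, so no quadratic variation accrues) and their complements (where it is bounded by the interval lengths), arriving at the bound \eqref{Display:Inequality:Using:Integral} by the occupation time of $S\setminus S_{2\delta(m)}$; it then needs an external input, Lemma 4.2 of Williams \cite{williams1985recurrence}, to show via \eqref{Display:Integral:Goes:To:Zero} that this occupation time has vanishing expectation. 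You instead expand the square, compute the cross-variation exactly as $\langle W_i^{\delta(m)},W_i^{\delta(n)}\rangle_t=|A_n(t)|$ (which is the polarized form of the same "identical increments on nested intervals" observation, handled pair-by-pair as in Lemmas \ref{Lemma:Xk:Martingale} and \ref{Lemma:Xk:Xl:Orthogonal}), obtain the exact identity $\langle W_i^{\delta(m)}-W_i^{\delta(n)}\rangle_t=|A_m(t)|-|A_n(t)|$, and conclude from the fact that the bounded monotone sequence $E^z[|A_m(t)|]$ is automatically Cauchy. What your route buys is self-containedness: the Cauchy property follows from monotonicity alone, with no appeal to the zero boundary occupation time of $Z$ (the paper still needs that fact later, to identify the limit as a Brownian motion, but not for Cauchyness). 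The only places needing slightly more care in a full write-up are the justification that disjoint inner/outer interval pairs are separated by positive gaps (so the discrete cross-sums vanish as in Lemma \ref{Lemma:Xk:Xl:Orthogonal}) and the passage to the infinite double sum, both of which localize exactly as in the proof of Lemma \ref{Wdelta}.
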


 \begin{proof} Let $\{\delta(n), n \geq 1\} \in\Gamma$.  Clearly it is sufficient to prove that $\{W_i^{\delta(n)}(T), n \geq 1\}$ is Cauchy
in $L^2(C_S,{\mathcal F}, P^z)$ for each $T \geq 0$. Suppose that for each $n > m \geq 1$ we  show that
\begin{equation}
 E^z[(W_i^{\delta(n)}(T)-W_i^{\delta(m)}(T))^2 ] \leq E^z \int_0^T 1\{Z(t) \in S \backslash S_{2 \delta (m)}\}dt. \label{Display:Inequality:Using:Integral}
\end{equation}
Then by the Dominated Convergence Theorem  and Lemma 4.2 of Williams \cite{williams1985recurrence} we have that
\begin{eqnarray}
 \lim_{m \rightarrow \infty} E^z \left[ \int_0^T 1\{Z(t) \in S \backslash S_{2 \delta (m)}\}dt \right] ~=~E^z \left[ \int_0^T 1\{Z(t) \in \partial S \} dt
 \right] &=&0, \label{Display:Integral:Goes:To:Zero}
\end{eqnarray}
and so $\{W_i^{\delta(n)}(T), n \geq 1\}$ is Cauchy in $L^2(C_S,{\mathcal F}, P^z)$ as desired.

Hence we need to show only \eqref{Display:Inequality:Using:Integral}, and that is what we shall do in the rest of this proof. We first claim that for each $k \geq 1$ and $n > m \geq 1$ there exists an $\ell \geq 1$ such that $[\sigma_k^{\delta(m)},\tau_k^{\delta(m)}] \subseteq  [\sigma_{\ell}^{\delta(n)},\tau_{\ell}^{\delta(n)}] $.
Suppose that $t \in [\sigma_k^{\delta(m)},\tau_k^{\delta(m)}] $. Then $Z(t) \in S_{\delta(m)}$. Moreover since $2 \delta(n) < \delta(m)$, if
$Z(t) \in S_{\delta(m)}$ then $t \in [\sigma_{\ell}^{\delta(n)},\tau_{\ell}^{\delta(n)}] $ for some $\ell \geq 1$. We
now claim that the choice of $\ell \geq 1$ does not depend on the choice of $t \in [\sigma_k^{\delta(m)},\tau_k^{\delta(m)}]$.
Suppose that $t_1, t_2 \in [\sigma_k^{\delta(m)},\tau_k^{\delta(m)}]$ with $t_1 \in [\sigma_{\ell}^{\delta(n)},\tau_{\ell}^{\delta(n)}]$
and $t_2 \in [\sigma_{p}^{\delta(n)},\tau_{p}^{\delta(n)}]$ for some $p > \ell$. Then $\sigma_p^{\delta(n)} \in [\sigma_k^{\delta(m)},\tau_k^{\delta(m)}]$ which is impossible since $Z(\sigma_p^{\delta(n)}) \in \partial S_{2 \delta(n)}$
and so the result follows.

Now write
\begin{equation*}
E^z[(W_i^{\delta(n)}(T)-W_i^{\delta(m)}(T))^2 ]
= E^z[\langle W_i^{\delta(n)}-W_i^{\delta(m)} \rangle_T ]\end{equation*}
\begin{eqnarray*}&=&E^z \left[ \sum_{k=1}^{\infty} \left[ \langle W_i^{\delta(n)}-W_i^{\delta(m)} \rangle_{\tau_k^{\delta(m)} \wedge T}- \langle W_i^{\delta(n)}-W_i^{\delta(m)} \rangle_{\sigma_k^{\delta(m)} \wedge T}   \right] \right]\\
&+&E^z \left[ \sum_{k=1}^{\infty} \left[ \langle W_i^{\delta(n)}-W_i^{\delta(m)} \rangle_{\sigma_k^{\delta(m)} \wedge T}- \langle W_i^{\delta(n)}-W_i^{\delta(m)} \rangle_{\tau_{k-1}^{\delta(m)} \wedge T}   \right] \right].
\end{eqnarray*}
We analyze each of the terms on the right hand side above separately. Regarding the first term recall that for each $k \geq 1$ and $n > m \geq 1$ there exists an $\ell \geq 1$ such that $[\sigma_k^{\delta(m)},\tau_k^{\delta(m)}] \subseteq  [\sigma_{\ell}^{\delta(n)},\tau_{\ell}^{\delta(n)}] $. It
then follows that for $k \geq 1$ and $n > m$,
\begin{eqnarray*}
 W_i^{\delta(m)}(T \wedge \tau_k^{\delta(m)})- W_i^{\delta(m)}(T \wedge \sigma_k^{\delta(m)})&=& W_i^{\delta(n)}(T \wedge \tau_k^{\delta(m)})- W_i^{\delta(n)}(T \wedge \sigma_k^{\delta(m)}),
\end{eqnarray*}
hence by Proposition IV.1.13 in \cite{revuzyor} the first term is zero.
Regarding the second term note that by \eqref{qv} for every $\delta>0$, $0\le s<t$, and $i=1,2$ we have
\begin{equation*}\langle W^{\delta}_i \rangle_t - \langle W^{\delta}_i \rangle_s \le t-s,
\end{equation*}
and on the interval $[\tau^{\delta(m)}_{k-1}, \sigma_k^{\delta(m)}]$ the process $W^{\delta(m)}$ is flat. Then it follows that
 for $i=1,2,$ we have
\begin{equation*}
 \sum_{k=1}^{\infty} \left[ \langle W_i^{\delta(n)}-W_i^{\delta(m)} \rangle_{\sigma_k^{\delta(m)} \wedge T}- \langle W_i^{\delta(n)}-W_i^{\delta(m)} \rangle_{\tau_{k-1}^{\delta(m)} \wedge T}   \right]
= \sum_{k=1}^{\infty} \left[ \langle W_i^{\delta(n)}\rangle_{\sigma_k^{\delta(m)} \wedge T}- \langle W_i^{\delta(n)}\rangle_{\tau_{k-1}^{\delta(m)} \wedge T}   \right] \end{equation*}
\begin{equation*}\le\sum_{k=1}^{\infty}  \left[ (\sigma_k^{\delta(m)} \wedge T) - (\tau_{k-1}^{\delta(m)} \wedge T)   \right].
\end{equation*}
Now note that if $t \in [\tau_{k-1}^{\delta(m)},\sigma_k^{\delta(m)}]$ for some $k \geq 1$ then $Z(t) \in S \backslash S_{2\delta(m)}$, thus
 \begin{equation}
 \sum_{k=1}^{\infty}  \left[ (\sigma_k^{\delta(m)} \wedge T) - (\tau_{k-1}^{\delta(m)} \wedge T)   \right]=\int_0^T  \sum_{k=1}^{\infty} 1\{t \in [\tau_{k-1}^{\delta(m)},\sigma_{k}^{\delta(m)} ]\} dt
 \leq \int_0^T 1\{Z(t) \in S \backslash S_{2 \delta (m)}\}dt,  \label{bound}
\end{equation}
and  \eqref{Display:Inequality:Using:Integral} follows.
\end{proof}

\begin{proposition}\label{Proposition:Brownian:Motion}
There exists a process $W$ on $(C_S,{\mathcal F}, {\mathcal F_t})$ such that for every sequence $\{\delta(n),n\ge 1\}\in\Gamma$ and every $z\in S$, the process $W$ is a standard 2-dimensional Brownian motion under $P^z$ started at zero, and
\begin{equation} \lim_{n\to\infty} W_i^{\delta(n)}=W_i\label{L2limit},~i=1,2,\end{equation}
in the norm topology given in \eqref{norm:def}.

\end{proposition}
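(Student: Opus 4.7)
The plan is as follows. Fix an arbitrary sequence $\{\delta(n),n\ge 1\}\in\Gamma$. By the preceding lemma, for every $z\in S$ and $i=1,2$, the sequence $\{W_i^{\delta(n)}\}_{n\ge 1}$ is Cauchy in $\Upsilon_2^{c,z}$. Since this space is complete under $\|\cdot\|_z$, there is a limit $\tilde W_i^z\in\Upsilon_2^{c,z}$ for each $z$. The main task is then to assemble these $z$-dependent limits into a single $\mathcal{F}$-measurable process $W$ defined on $(C_S,\mathcal{F},\mathcal{F}_t)$, and then to identify $W$ as a standard 2-dimensional Brownian motion via L\'evy's characterization.

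To produce such a universal $W$, I would extract a subsequence $\{n_k\}$ along which $\|W_i^{\delta(n_{k+1})}-W_i^{\delta(n_k)}\|_0\le 2^{-k}$; by Doob's $L^2$ maximal inequality and Borel--Cantelli, $W_i^{\delta(n_k)}$ then converges $P^0$-a.s., uniformly on compact time intervals, to some continuous process. Define $W_i$ pathwise as $\limsup_{k\to\infty}W_i^{\delta(n_k)}$ where this $\limsup$ yields a finite continuous function, and zero otherwise. This $W_i$ is $\mathcal{F}_t$-adapted and continuous, and coincides $P^0$-a.s.\ with $\tilde W_i^0$. For a general $z\in S$, the same subsequence $W_i^{\delta(n_k)}$ remains Cauchy (hence convergent) in $L^2(P^z)$ to $\tilde W_i^z$; extracting a further $P^z$-a.s.\ convergent subsequence, whose pointwise limit equals $\tilde W_i^z$, $P^z$-a.s., and comparing with the $\limsup$ definition identifies $W_i=\tilde W_i^z$, $P^z$-a.s. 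Hence $W_i^{\delta(n)}\to W_i$ in $\Upsilon_2^{c,z}$ for every $z$.

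Having constructed $W$, I would next verify the hypotheses of L\'evy's characterization. Continuity, the martingale property, and $W_i(0)=0$ pass to the $\Upsilon_2^{c,z}$-limit. For the covariations, I would use the fact that convergence in $\Upsilon_2^{c,z}$ combined with Kunita--Watanabe and Cauchy--Schwarz implies $\langle W_i^{\delta(n)},W_j^{\delta(n)}\rangle_T\to \langle W_i,W_j\rangle_T$ in $L^1(P^z)$ for each fixed $T$ and $i,j=1,2$. By Lemma \ref{Wdelta} the cross variation is identically zero, so $\langle W_1,W_2\rangle_T=0$, $P^z$-a.s. For the diagonal variation, the complement $[0,T]\setminus\bigcup_k[\sigma_k^\delta,\tau_k^\delta]$ is contained in $\{s\in[0,T]:Z(s)\in S\setminus S_{2\delta}\}$, so by \eqref{qv},
\begin{equation*}
T-\int_0^T 1\{Z(s)\in S\setminus S_{2\delta}\}\,ds\;\le\;\langle W_i^{\delta}\rangle_T\;\le\;T.
\end{equation*}
Dominated convergence together with Lemma 4.2 of Williams \cite{williams1985recurrence} (asserting that $Z$ spends no Lebesgue time on $\partial S$) forces the lower bound to tend to $T$ as $\delta\to 0$, whence $\langle W_i\rangle_T=T$, $P^z$-a.s. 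L\'evy's theorem then yields that $W$ is a standard 2-dimensional Brownian motion started at zero under each $P^z$. Independence of the limit from the particular sequence in $\Gamma$ follows by comparing any two sequences $\{\delta(n)\},\{\delta'(n)\}\in\Gamma$ via a sufficiently rapidly decreasing common refinement in $\Gamma$ and applying the uniqueness of the $L^2(P^z)$-limit.

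The main obstacle, in my view, is the measure-theoretic step of producing one $\mathcal{F}$-measurable $W$ that simultaneously represents every $\tilde W_i^z$, since $P^z$-a.s.\ convergence along a fixed subsequence cannot in general be secured for all $z$ at once; the $\limsup$-along-a-subsequence construction above circumvents this by reducing to a single $P^0$-a.s.\ convergent subsequence and then transferring the identification to arbitrary $z$ through a further subsequence and the uniqueness of $L^2(P^z)$-limits.
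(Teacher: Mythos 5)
Your overall skeleton matches the paper's: use the completeness of $\Upsilon_2^{c,z}$ to get a limit $\tilde W_i^z$ for each $z$, identify its law, and then glue the $z$-dependent limits into one process on $(C_S,\mathcal{F},\mathcal{F}_t)$. Your identification of the law via L\'evy's characterization (convergence of brackets in $L^1$ by Kunita--Watanabe, the sandwich $T-\int_0^T 1\{Z(s)\in S\setminus S_{2\delta}\}\,ds\le\langle W_i^{\delta}\rangle_T\le T$ from \eqref{qv}, and Lemma 4.2 of \cite{williams1985recurrence}) is a legitimate and arguably cleaner alternative to the paper's route, which instead deduces the finite-dimensional distributions from the Martingale Invariance Principle (Theorem 7.1.4 of \cite{EK86}) applied to $W^{\delta(n)}$ and matches them with $\tilde W^z$ along an a.s.\ convergent subsequence. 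That part of your argument is sound.

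The genuine gap is in the gluing step, which you correctly identify as the main obstacle but do not actually close. You define $W_i$ as $\limsup_k W_i^{\delta(n_k)}$ along a subsequence chosen so that convergence holds $P^0$-a.s., setting $W_i=0$ off the event where this $\limsup$ is a finite continuous function. For $z\neq 0$ you then extract a \emph{further} subsequence converging $P^z$-a.s.\ to $\tilde W_i^z$ and claim this identifies $W_i=\tilde W_i^z$, $P^z$-a.s. This does not follow: the further subsequence controls only its own limit, not the $\limsup$ over the \emph{full} subsequence $\{n_k\}$ that defines $W_i$. On the (possibly $P^z$-non-null) event where $\{W_i^{\delta(n_k)}\}$ fails to converge, $W_i$ is either set to $0$ by fiat or equals a $\limsup$ that need not agree with the subsequential limit, so $W_i=\tilde W_i^z$ $P^z$-a.s.\ is not established. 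One cannot repair this by choosing a single subsequence with a summable rate valid for all $z$ simultaneously, because the bound \eqref{Display:Inequality:Using:Integral} depends on $z$ and no uniformity in $z$ is available; and there are uncountably many $z$, so a diagonal argument is unavailable. This is precisely why the paper invokes Lemma 3.29 of \c{C}inlar, Jacod, Protter and Sharpe \cite{cinlar1980semimartingales}, which produces a single $\mathcal{F}$-measurable process indistinguishable from $\tilde W^z$ under every $P^z$ from convergence in probability under each $P^z$, using the Markovian structure (Hypothesis 3.1 and condition (iv) of Section 3a there, verified via Corollary 3.3 and Theorem 3.14 of \cite{varadhan1985brownian}). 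Some input of this kind is indispensable; without it the universal $W$ is not constructed.
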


\begin{proof}
 {Let $z \in S$ and for $i=1,2,$ let $W_i^z$ be the unique limit point of the Cauchy sequence $\{W_i^{\delta(n)}, n \geq 1\}$ in the complete space $\Upsilon_2^{c,z}$. We claim that $W^z=(W_1^z,W_2^z)$ is a 2-dimensional standard Brownian motion on $(C_S,{\mathcal M},{\mathcal M}_t,P^z)$.} Since $W^z \in {M}_2^{c,z}$ it suffices to characterize the finite-dimensional distributions of $W^z$.  By Lemma \ref{Wdelta}, \eqref{bound},
 \eqref{Display:Integral:Goes:To:Zero}, and the Martingale Invariance Principle (see Theorem 7.1.4 of \cite{EK86}) it follows that $W^{\delta(n)} \Rightarrow W^0$ as $n \rightarrow \infty$ under $P^z$, where $W^0$ is a standard 2-dimensional Brownian motion. This implies that for each $m \geq 1$ and
$0 \leq t_1 < t_2 < ... < t_m$, $(W^{\delta(n)}(t_1),...,W^{\delta(n)}(t_m)) \Rightarrow (W^{0}(t_1),...,W^{0}(t_m))$
as $n \rightarrow \infty$. On the other hand, for each $i=1,2,$ the sequence $\{W_i^{\delta(n)}, n \geq 1\}$ is Cauchy in $\Upsilon_2^{c,z}$
with unique limit point $W_i^z$ and so $W_i^{\delta(n)}(t_j) \rightarrow W_i^z(t_j)$ in $L^2(P^z)$ as $n \rightarrow \infty$
for each $j=1,...,m$. Thus there exists a subsequence $\{\delta(n(k)), k \geq 1\}$ such that $(W^{\delta(n(k))}(t_1),...,W^{\delta(n(k))}(t_m)) \rightarrow (W^z(t_1),...,W^z(t_m)), P^z$-a.s. as $k \rightarrow \infty$. But this then implies
that $W^z$ and $W^0$ have the same finite dimensional distributions, thus $W^z$ is indeed a standard Brownian motion under $P^z$. We still need to show the existence of a process $W$ on $(C_S,{\mathcal F}, {\mathcal F}_t)$  such that for every $z\in S$ the process $W^z$ is indistinguishable from $W$ under $P^z$. Notice that by \eqref{L2limit} for every $t\ge 0$ we have that $W^{\delta(n)}(t)\to W^z (t)$ in probability as $n\to \infty$, and the existence of such $W$ follows from Cinlar, Jacod, Protter, and Sharpe \cite{cinlar1980semimartingales}, Lemma 3.29. Note also that Hypothesis 3.1 and condition (iv) at the beginning of Section 3a in \cite{cinlar1980semimartingales}  are satisfied by \cite{varadhan1985brownian}, Corollary 3.3 and Theorem 3.14.
\end{proof}

\begin{proof}[Proof of Part \ref{Proposition:Skorohod:Problem:Part:1} of Theorem \ref{Proposition:Skorokhod:Problem}]
We define
\begin{equation} X(t,\omega)=W(t,\omega)+\omega(0),\ t\ge 0,\label{X:def}\end{equation}
and note that by Proposition \ref{Proposition:Brownian:Motion} for every $z\in S$ the 2-dimensional process $X$ is indeed a standard Brownian motion on $(C_S,{\mathcal F}, {\mathcal F_t}, P^z)$ started at $z$, exactly as required.
\end{proof}

\subsection{Proof of Part \ref{Proposition:Skorohod:Problem:Part:2} of Theorem \ref{Proposition:Skorokhod:Problem}}
\label{Subsection:Semimartingale:Decomposition:Of:X:On:Excursion:Intervals}

Let the set of continuous functions $\omega:[0,\infty)\mapsto{\mathbb R}^2$   be denoted by $C_{\mathbb{R}^2}$, which is an extension of $C_S$.
 On this space, for each $t\ge s \geq 0$, let $\tilde{\mathcal M}_t=\sigma\{\omega(s):0\le s\le t\}$, the sigma-field of subsets of $C_{{\mathbb R}^2}$ generated by the coordinate maps $\omega\mapsto\omega(s)$ for $0\le s\le t$. Similarly, define $\tilde{\mathcal M}=\sigma\{\omega(s): s\in[0,\infty) \}$. As explained in Williams \cite{williams1985reflected}, these $\sigma$-fields represent the  natural extensions of  ${\mathcal M}_t$ and ${\mathcal M}$ from $C_S$ to $C_{\mathbb{R}^2}$.
Let $\tilde X:[0,\infty)\times C_{\mathbb{R}^2}\mapsto\mathbb{R}^2$ be the coordinate-mapping process
\begin{eqnarray*}
\tilde X(t,\omega)&=&\omega(t),~~ t\ge 0,~w\in C_{\mathbb{R}^2}.
\end{eqnarray*}

For each $z\in S$, let $Q^z$ be a probability measure on $\tilde{\mathcal M}$ such that $\tilde X$ is a Brownian motion starting at $z$ under $Q^z$. Now let $R$ be the $2\times 2$ matrix whose column vectors are $v_1$ and $v_2$.
The set $\mathcal A$ defined below will play a central role in this subsection.

\begin{definition}\label{Definition:Of:The:Set:A}
Let ${\mathcal A}\subset C_S\times C_{\mathbb{R}^2}$ be the set that consists of all pairs $(z,y)\in C_S\times C_{\mathbb{R}^2}$ satisfying the following two properties:
\begin{enumerate}
\item There exists $u\in C_{\mathbb{R}^2}$ such that both components of $u$ are non-decreasing, and y(t)=Ru(t) for all $t\ge 0$, \label{Item:One:The:Set:A}
\item For $j=1,2,$ the $j^{ th  }$ component $u_j$ of $u$ increases only at times $t \geq 0$ when $z(t)\in\partial S_j$, i.e., $\int_0^\infty 1\left\{z(v)\in S\setminus\partial S_j\right\}du_j(v)=0$. \label{Item:Two:The:Set:A}
\end{enumerate}
\end{definition}
\noindent We then have the following result.

\begin{lemma}
The set ${\mathcal A}$ is measurable, that is, ${\mathcal A}\in{\mathcal M}\times\tilde{\mathcal M}$.
\end{lemma}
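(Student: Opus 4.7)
The plan is to express $\mathcal{A}$ as a countable Boolean combination of sets in $\mathcal{M}\times\tilde{\mathcal{M}}$ by testing both defining properties only at rational times. Under the standing assumption $1<\alpha<2$, a direct computation gives $v_1\times v_2=\sin(\xi(\alpha-1))/(\cos\theta_1\cos\theta_2)$, so $R$ is invertible outside the isolated case $\xi(\alpha-1)\in\pi\mathbb{Z}$. I would first assume $R$ is invertible, in which case $u=R^{-1}y$ is forced by property (\ref{Item:One:The:Set:A}), is automatically continuous, and the map $(z,y)\mapsto R^{-1}y$ is $\tilde{\mathcal{M}}$-measurable; the existential quantifier in (\ref{Item:One:The:Set:A}) then collapses to a verification of explicit conditions on $R^{-1}y$.

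Non-decreasingness of $u_j=(R^{-1}y)_j$ is equivalent, by continuity of $u_j$, to $u_j(s)\le u_j(t)$ for all rationals $0\le s<t$, giving the countable intersection
\begin{equation*}
\bigcap_{j=1,2}\bigcap_{\substack{s,t\in\mathbb{Q}_+\\ s<t}}\bigl\{(z,y):(R^{-1}y(s))_j\le(R^{-1}y(t))_j\bigr\},
\end{equation*}
each constituent of which lies in $\tilde{\mathcal{M}}$.

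The main obstacle is property (\ref{Item:Two:The:Set:A}), which is a priori a condition on the \emph{support} of the random measure $du_j$ and not directly expressible via countably many pointwise evaluations. I would replace it by the equivalent rational-time statement: $u_j(s)=u_j(t)$ for every pair of rationals $0\le s<t$ with $z([s,t])\cap\partial S_j=\emptyset$. The forward direction is immediate from $\int_s^t 1\{z(v)\notin\partial S_j\}\,du_j(v)=u_j(t)-u_j(s)$, and for the converse one uses that $\{v:z(v)\notin\partial S_j\}$ is open, hence a countable union of maximal open intervals, each exhausted from within by rational subintervals on which $u_j$ is constant; continuity of $u_j$ then propagates constancy to the entire interval and $du_j$ charges no mass off $\partial S_j$. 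The remaining measurability check is that $\{z:z([s,t])\cap\partial S_j=\emptyset\}\in\mathcal{M}$ for each rational $s<t$, which follows from
\begin{equation*}
\bigl\{z:z([s,t])\cap\partial S_j=\emptyset\bigr\}=\bigcup_{n\ge 1}\bigcap_{v\in\mathbb{Q}\cap[s,t]}\bigl\{z:\mathrm{dist}(z(v),\partial S_j)\ge 1/n\bigr\},
\end{equation*}
justified by continuity of $z$ and $\mathrm{dist}(\cdot,\partial S_j)$ together with compactness of $[s,t]$; each set on the right depends only on $z$ at a single rational time and so lies in $\mathcal{M}$.

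Combining the above, $\mathcal{A}$ is a countable intersection of sets in $\mathcal{M}\times\tilde{\mathcal{M}}$ when $R$ is invertible. The remaining degenerate case $v_1\parallel v_2$ requires only a brief modification: one imposes the measurable constraint that $y$ take values in the one-dimensional image of $R$, writes $y=fv_1$ for a continuous scalar $f$, and checks that the existence of a valid pair $u=(u_1,u_2)$ is equivalent to $f$ being non-decreasing with $df$ supported on $\{v:z(v)\in\partial S\}$; both conditions reduce to the same rational-time tests as above, yielding measurability in that case as well.
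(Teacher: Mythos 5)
Your proof is correct, but it takes a genuinely different route from the paper's. The paper first strips off the matrix $R$ (passing to the set $\tilde{\mathcal A}$ of pairs $(z,u)$ with $u$ componentwise non-decreasing and $du_j$ supported on $\{v: z(v)\in\partial S_j\}$), invokes the Stroock--Varadhan identification of ${\mathcal M}\times\tilde{\mathcal M}$ with the Borel $\sigma$-field of $C_S\times C_{{\mathbb R}^2}$ under local uniform convergence, and then shows $\tilde{\mathcal A}$ is \emph{closed} in that topology by a limit argument (monotonicity passes to uniform limits, and flatness of $u_{n,j}$ on intervals where $z$ stays away from $\partial S_j$ persists in the limit). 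You instead avoid topology altogether and exhibit ${\mathcal A}$ directly as a countable Boolean combination of finite-dimensional cylinder sets: you invert $R$ (checking that $\det R\neq 0$ except at $\xi(\alpha-1)=\pi$), reduce monotonicity to rational-time comparisons, and --- the real content --- replace the support condition on $du_j$ by the rational-time test ``$u_j(s)=u_j(t)$ whenever $z([s,t])\cap\partial S_j=\emptyset$,'' whose equivalence you justify by exhausting the maximal open components of $\{v:z(v)\notin\partial S_j\}$ and whose measurability you obtain via the compactness/density identity for $\{z:z([s,t])\cap\partial S_j=\emptyset\}$. Each approach buys something: the paper's is shorter and sidesteps any case analysis on $R$, but it leans on the Borel-set identification and leaves the passage from $\tilde{\mathcal A}$ to ${\mathcal A}={(z,Ru)}$ (a forward image) somewhat implicit when $R$ is singular; your argument is longer but entirely elementary, and your explicit treatment of the degenerate case $v_1\parallel v_2$ is actually more careful on exactly that point (one should note there that $v_2$ is a \emph{positive} multiple of $v_1$ when $\xi(\alpha-1)=\pi$, which is what makes the reduction to a single non-decreasing scalar $f$ with $df$ supported on $\{z\in\partial S\}$ valid).
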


\begin{proof} Let $\tilde {\mathcal A}\subset C_S\times C_{\mathbb{R}^2}$ be a set such that $(z,u)\in\tilde {\mathcal A}$ if both components of $u$ are non-decreasing, and part \ref{Item:Two:The:Set:A} of the above definition holds. We then have ${\mathcal A}=\{(z,Ru):(z,u)\in\tilde {\mathcal A}\}$ and thus it is sufficient to show that $\tilde {\mathcal A}$ is a measurable set.
{It is shown in Stroock and Varadhan \cite{stroock2007multidimensional}}, page 30, that ${\mathcal M}\times\tilde {\mathcal M}$ is exactly the class of Borel sets in $C_S\times C_{\mathbb{R}^2}$ under the topology of uniform convergence on compact sets. It is therefore sufficient to show that $\tilde {\mathcal A}$ is closed under the above topology. In order to show this,  let $\{(z_n,u_n),n\ge 1\}\subset \tilde {\mathcal A}$ be such that
$(z_n,u_n)\to (z,u)$ uniformly on compact sets. Since both components of $u_n$ are non-decreasing for each $n
\geq 1$, both components of $u$ must be non-decreasing as well. Next we show that $(z,u)$ satisfies Condition \ref{Item:Two:The:Set:A} of Definition \ref{Definition:Of:The:Set:A}. Let $t > 0$ such that $z(t)\notin \partial S_j$, where $j$ is either 1 or 2. {We need to show that there exists a neighborhood $(a,b)$ of $t$ such that $u_j$ is flat on $(a,b)$, where $u_j$ is the $j^{\rm th}$ component of $u$. From $z(t)\notin \partial S_j$ follows that there exists a closed ball $B$ centered around $z(t)$ such that $B\cap \partial S_j=\emptyset$. By the continuity of $z$  there exists a neighborhood $(a,b)$ of $t$ such that $z(q)\in B$ for all $q\in(a,b)$. Then by the uniform convergence of $z_n$ to $z$ there exists another closed ball $C\supset B$ centered also around $z(t)$ and $N\in{\mathbb N}_+$such that $C\cap \partial S_j=\emptyset$,   and for all $n\ge N$ we have $z_n(q)\in C$ for all $q\in(a,b)$. But $(z_n,u_n)\in \tilde {\mathcal A}$ implies that $u_{n,j}$ (the $j^{\rm th}$ component of $u_n$) must be flat on $(a,b)$ for $n\ge N$, hence $u_j$ is also flat on $(a,b)$.} Finally, if $t=0$ and $z(t)\notin \partial S_j$, then we already saw that there exists $b>0$ such that $u_j$ is flat on $(0,b)$. But then the continuity of $u_j$ implies that it is not increasing in $t=0$.
\end{proof}

The next proposition is a restatement   of the results on page 163 and of Theorem 1 of Williams \cite{williams1985reflected}.

\begin{proposition}\label{williams}
There exists a unique pair $(\tilde Z, \tilde Y)$ of continuous, adapted processes on $(C_{\mathbb{R}^2},\tilde{\mathcal M},\tilde{\mathcal M}_t)$
(both $\tilde Z$ and $\tilde Y$ are two-dimensional) such that\\
\begin{equation} \tilde Y(0)=0,\ \tilde Z(t)\in S\ \hbox{for each}\ t\ge 0,\label{A4}\end{equation}
\begin{equation}\tilde Y(t\vee\tilde\tau_0)=\tilde Y(\tilde\tau_0),\quad t\ge 0,\label{A4.1}\end{equation}
\begin{equation}
\tilde Z(t)=\begin{cases} \tilde X(t)+\tilde Y(t) &\mbox{if }t\le\tilde\tau_0,\\
0 &\mbox{if } t\ge \tilde\tau_0,
\end{cases}\label{A5}
\end{equation}
where $\tilde \tau_0=\inf\{t\ge 0:\tilde Z(t)=0\}$,
\begin{equation*} (\tilde Z,\tilde Y)\in {\mathcal A}.
\end{equation*}
Furthermore, for every $z\in S$, $Z(\cdot\wedge\tau_0)$ has the same law under $P^z$ as $\tilde Z(\cdot\wedge\tilde \tau_0)=\tilde Z(\cdot)$ has under $Q^z$, i.e., for any $B\in{\mathcal M}$
\begin{equation*}P^z\left(Z(\cdot\wedge\tau_0)\in
 B\right) = Q^z\left(\tilde Z(\cdot\wedge\tilde\tau_0)\in B\right).
 \end{equation*}
\end{proposition}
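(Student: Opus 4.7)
The plan is to obtain Proposition \ref{williams} as a repackaging of the pathwise Skorokhod construction and Theorem 1 of Williams \cite{williams1985reflected}, with a trivial extension of the process past the first hit of the origin. I would split the argument into three steps: pathwise existence, uniqueness, and the identification in law.

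\textbf{Existence.} Working on $(C_{\mathbb{R}^2},\tilde{\mathcal M},\tilde{\mathcal M}_t)$ with the Brownian motion $\tilde X$ under $Q^z$, I would invoke the pathwise Skorokhod problem for the wedge on $S\setminus\{0\}$ with the two constant oblique directions $v_1,v_2$. Since $v_j\cdot n_j = 1$ (non-tangential) and the faces $\partial S_1$ and $\partial S_2$ are disjoint away from the vertex, a Lipschitz Skorokhod map is available on each bounded set bounded away from the origin; applying it pathwise up to $\tilde\tau_0=\inf\{t:\tilde Z(t)=0\}$ produces a continuous, $\tilde{\mathcal M}_t$-adapted pair $(\tilde Z,\tilde Y)$ with $\tilde Y=R\tilde U$ for a continuous, coordinatewise non-decreasing $\tilde U$ whose $j^{\rm th}$ component grows only on $\partial S_j$. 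This manifestly places $(\tilde Z,\tilde Y)\in{\mathcal A}$ and gives \eqref{A4}. Freezing $\tilde Y$ at $\tilde Y(\tilde \tau_0)$ and setting $\tilde Z\equiv 0$ on $[\tilde\tau_0,\infty)$ yields \eqref{A4.1} and \eqref{A5}.

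\textbf{Uniqueness.} Suppose $(\tilde Z',\tilde Y')$ is another pair satisfying \eqref{A4}--\eqref{A5} and lying in $\mathcal A$, with hitting time $\tilde\tau_0'$. On every compact subinterval $[0,T]$ and inside any closed subregion of $S\setminus\{0\}$, both pairs solve the same pathwise oblique Skorokhod problem driven by $\tilde X$. Localizing via an increasing sequence of stopping times $\sigma_n=\inf\{t:\mathrm{dist}(\tilde Z(t),0)\le 1/n\}$ (and the analogue for $\tilde Z'$), pathwise uniqueness of the Skorokhod map on each localized region forces $(\tilde Z,\tilde Y)=(\tilde Z',\tilde Y')$ on $[0,\sigma_n\wedge\sigma_n']$; letting $n\to\infty$ gives agreement on $[0,\tilde\tau_0\wedge\tilde\tau_0')$ and hence $\tilde\tau_0=\tilde\tau_0'$. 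Conditions \eqref{A4.1}--\eqref{A5} then pin down both processes on $[\tilde\tau_0,\infty)$.

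\textbf{Identification in law.} This is exactly the content of Theorem 1 and the discussion on page 163 of Williams \cite{williams1985reflected}: before the first hit of the origin, the solution $Z$ of the Varadhan--Williams submartingale problem has the same law as the semimartingale reflected Brownian motion obtained by the pathwise construction above. Translating to the canonical Brownian space, this yields the stated equality of laws for $Z(\cdot\wedge\tau_0)$ under $P^z$ and $\tilde Z(\cdot\wedge\tilde\tau_0)$ under $Q^z$.

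The main obstacle is the bookkeeping in the uniqueness step, where one has to argue that $(\tilde Z,\tilde Y)\in\mathcal A$ together with \eqref{A4}--\eqref{A5} really does force both processes to coincide with the pathwise Skorokhod solution on the complement of $\{\tilde Z=0\}$; everything else is a direct appeal to Williams's existing results, with the additional extension past $\tilde\tau_0$ being trivial by construction.
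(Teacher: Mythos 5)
Your proposal matches the paper's treatment: the paper gives no independent proof, stating only that the proposition is a restatement of the results on page 163 and of Theorem 1 of Williams \cite{williams1985reflected}, which is exactly the reduction you make (your identification-in-law step, and your acknowledgment that existence and uniqueness are Williams's results). Your supplementary sketch of the localized pathwise Skorokhod construction away from the vertex, with $\tilde Y=R\tilde U$ extending continuously to $\tilde\tau_0$ by monotonicity of the components of $\tilde U$, is consistent with how Williams's argument proceeds and introduces no error.
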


Recall now the definitions of the stopping times $\tau_k^{\delta},\sigma_k^\delta$ from the beginning of Section \ref{Section:Identification:of:X:and:Skorokhod:Problem}. Since $Z$ is the coordinate-mapping process, these definitions could be cast in the form $\sigma_k^\delta(w)=\inf\{t\ge \tau_{k-1}^\delta: w(t)\in S_{2\delta}\}$ and $\tau_k^\delta(w)=\inf\{t\ge \sigma_k^\delta: w(t)\in \partial S_{\delta}\}$ for $w\in C_S$. We can also write $\tau_0=\inf\{t>0: w(t)=0\}$ for $w\in C_S$. In principle, some of these stopping times may be infinity for some $w\in S$, but by the continuity of $\omega$ we have
$\lim_{k\to\infty}\sigma_k^\delta(\omega) = \lim_{k\to\infty}\tau_k^\delta(\omega) = \infty$ for all $\omega\in C_S$.
For $\delta>0$,  we define the measurable mapping $I_{\delta} : C_S\mapsto C_{\mathbb{R}^2}$ as
\begin{equation*} I_{\delta}(\omega)(\cdot)=\sum_{k=1}^\infty \left[ \omega\left(\cdot\wedge\tau_k^\delta(\omega)\right) -
\omega\left(\cdot\wedge\sigma_k^\delta(\omega)\right)\right].
\end{equation*}
By the continuity of $\omega$, for any $t\ge 0$, in the infinite sum defining $I_{\delta}(\omega)(t)$  all terms except finitely many are zero.
We also note that
\begin{equation}I_{\delta}(\omega)(\cdot)=W^\delta (\cdot,\omega).\label{I}\end{equation}

Now let  $\underline 0$ be the zero function in $C_{\mathbb{R}^2}$.

\begin{definition} For any given sequence $\{\gamma(n),n\ge 1\} \in\Gamma$,     we define the mapping $F_{\gamma}:C_S\mapsto C_{\mathbb{R}^2}$ in the following way: if there exists a function $\tilde \omega \in C_{\mathbb{R}^2}$ such that for each $m\in{\mathbb N}_+$,
\begin{equation*}\sup_{t\le m}\|I_{\gamma(n)}(\omega)(t)-\tilde \omega(t)\|^2 \to 0 \hbox{ as } n\to\infty,
\end{equation*}
then $F_{\gamma}(\omega)=\tilde\omega$. If such $\tilde\omega$ does not exist, then let $F_{\gamma}(\omega)$ be $\underline 0$.
\end{definition}

Notice that for $\delta>0$, $T\ge 0$, and $\omega\in C_S$,
\begin{equation*} I_{\delta}(\omega)(\cdot\wedge T)=I_{\delta}(\omega(\cdot\wedge T)),
\end{equation*}
hence  for all $\omega\in C_S$ such that   $F_\gamma(\omega)\not=\underline 0$, we have
\begin{equation}F_\gamma(\omega)(\cdot\wedge T)=F_\gamma(\omega(\cdot\wedge T)).\label{stopping}\end{equation}

\begin{lemma}
 $F_{\gamma}$ is ${\mathcal M}/\tilde{\mathcal M}$ measurable.

\end{lemma}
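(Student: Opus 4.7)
The plan is to exhibit $F_\gamma$ as the pointwise ucc-limit of the measurable maps $I_{\gamma(n)}$ on a measurable ``good'' subset $A\subset C_S$, and as the constant function $\underline 0$ off $A$; then $F_\gamma$ is measurable because each ingredient is.

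First, I would verify that every $I_{\gamma(n)}:C_S\to C_{\mathbb R^2}$ is $\mathcal M/\tilde{\mathcal M}$-measurable. Since $\tilde{\mathcal M}$ is generated by the coordinate maps, it suffices to check that for every $t\ge 0$ the map $\omega\mapsto I_{\gamma(n)}(\omega)(t)\in{\mathbb R}^2$ is $\mathcal M$-measurable. This reduces to the measurability of the stopping times $\sigma_k^{\gamma(n)}$ and $\tau_k^{\gamma(n)}$, which are $\mathcal F_t$- (hence $\mathcal M$-)measurable as noted at the start of Section \ref{Subsection:Proof:That:X:Exists}, together with the elementary fact that $\omega\mapsto \omega(s\wedge\tau_k^{\gamma(n)}(\omega))$ is $\mathcal M$-measurable for each $s$.

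Second, I would invoke the identification of $\tilde{\mathcal M}$ with the Borel $\sigma$-field on $C_{\mathbb R^2}$ equipped with the topology of uniform convergence on compact sets (the same identification used in the preceding lemma via Stroock and Varadhan). Under this topology $C_{\mathbb R^2}$ is Polish, and for each $m\in\mathbb N_+$ the seminorm $f\mapsto \|f\|_{\infty,m}:=\sup_{t\le m}\|f(t)\|$ is ucc-continuous, hence $\tilde{\mathcal M}$-measurable. Consequently, for every $n,n',m\in\mathbb N_+$ the composition $\omega\mapsto \|I_{\gamma(n)}(\omega)-I_{\gamma(n')}(\omega)\|_{\infty,m}$ is $\mathcal M$-measurable.

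Third, I define the Cauchy set
\begin{equation*}
A\;=\;\bigcap_{m,k\in\mathbb N_+}\,\bigcup_{N\in\mathbb N_+}\,\bigcap_{n,n'\ge N}\bigl\{\omega\in C_S:\|I_{\gamma(n)}(\omega)-I_{\gamma(n')}(\omega)\|_{\infty,m}<1/k\bigr\}\;\in\;\mathcal M.
\end{equation*}
By completeness of $(C_{\mathbb R^2},\text{ucc})$, $A$ is exactly the set of $\omega$ on which $\{I_{\gamma(n)}(\omega)\}_{n\ge 1}$ converges uniformly on compacts, i.e., the set on which the function $\tilde\omega$ in the definition of $F_\gamma$ exists. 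Set
\begin{equation*}
G(\omega)\;=\;\begin{cases}\lim_{n\to\infty} I_{\gamma(n)}(\omega), & \omega\in A,\\ \underline 0, & \omega\notin A.\end{cases}
\end{equation*}
On $A$, $G$ is a pointwise ucc-limit of $\tilde{\mathcal M}$-measurable maps into the Polish space $C_{\mathbb R^2}$, hence $\tilde{\mathcal M}$-measurable on $A$; on $A^c$ it is constant. Since $A\in\mathcal M$, gluing yields $\mathcal M/\tilde{\mathcal M}$-measurability of $G$ on all of $C_S$, and by construction $G=F_\gamma$.

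The only step that demands genuine care is the measurability of $I_{\gamma(n)}$ itself; once that is in hand, everything else is a standard Cauchy-set construction exploiting the Polish/Borel structure of $(C_{\mathbb R^2},\text{ucc})$. I therefore expect that subtle step—sorting out measurability of $\omega\mapsto I_{\gamma(n)}(\omega)(t)$ through the random stopping times—to be the main (and only) obstacle in the argument.
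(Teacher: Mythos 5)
Your argument is correct and is essentially the same as the paper's, whose entire proof is a one-line citation to a standard measure-theoretic fact (that the convergence set of a sequence of measurable maps into a complete metric space is measurable and the limit function, extended by a constant off that set, is measurable). Your write-up simply supplies the details behind that citation, including the $\mathcal M$-measurability of each $I_{\gamma(n)}$ and the identification of $\tilde{\mathcal M}$ with the Borel sets of the ucc topology, which the paper leaves implicit.
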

\begin{proof}This follows immediately from \cite{brown:pearcy:operator:theory}, page 111, point R.\end{proof}

\begin{lemma}\label{shift:I} Let $\delta>0$, $T\ge 0$, $\omega\in C_S$ such that
\begin{equation}\omega(T)\in S_{2\delta}^0\cup(S\setminus S_\delta),\label{condition}\end{equation}
where $S_{2\delta}^0$ is the interior of $S_{2\delta}$.
Then
\begin{equation} I_\delta(\omega(T+\cdot))(t)= I_\delta(\omega)(T+t)-I_\delta(\omega)(T),\ t\ge 0.\label{I:shifted}\end{equation}
\end{lemma}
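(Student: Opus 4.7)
The mapping $I_\delta$ is an accumulation of path-increments across the excursion intervals $[\sigma_k^\delta(\omega),\tau_k^\delta(\omega)]$. The claim asserts that this accumulation splits additively at time $T$, and the hypothesis \eqref{condition} is exactly what is needed to ensure that the excursion-index sequence of $\omega(T+\cdot)$ aligns cleanly with a tail of the excursion-index sequence of $\omega$; the excluded annulus $S_\delta\setminus S_{2\delta}^0$ is precisely the transition region where such alignment would fail. The plan is to set $K:=\max\{k\ge 0:\tau_k^\delta(\omega)\le T\}$ with the convention $\tau_0^\delta:=0$, so that $\tau_K^\delta\le T<\tau_{K+1}^\delta$, and then to treat the two cases of \eqref{condition} separately.

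In the first case, $\omega(T)\in S\setminus S_\delta$. Since $\omega$ stays in $S_\delta$ throughout $[\sigma_{K+1}^\delta,\tau_{K+1}^\delta)$ (by the alternative characterization of $\tau_{K+1}^\delta$ via $S_\delta^c\cap S^o$), the assumption $\omega(T)\notin S_\delta$ forces $T<\sigma_{K+1}^\delta$. Unwinding the definitions, I then get $\sigma_k^\delta(\omega(T+\cdot))=\sigma_{K+k}^\delta(\omega)-T$ and $\tau_k^\delta(\omega(T+\cdot))=\tau_{K+k}^\delta(\omega)-T$ for every $k\ge 1$. In the second case, $\omega(T)\in S_{2\delta}^0$. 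The geometric fact that $S_{2\delta}\subset S_\delta^o$ (immediate from $S_\delta=S+\delta e^{i\xi/2}$) yields $\omega(T)\notin\partial S_\delta$ and hence $T\ne \tau_{K+1}^\delta$, while $\omega(T)\in S_{2\delta}$ forces $\sigma_{K+1}^\delta\le T$. Thus $\sigma_1^\delta(\omega(T+\cdot))=0$ and $\tau_1^\delta(\omega(T+\cdot))=\tau_{K+1}^\delta(\omega)-T$, with the shift relations for $k\ge 2$ as in the first case.

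With these stopping-time identifications in hand, the verification of \eqref{I:shifted} reduces to a term-by-term comparison after expanding both sides using the elementary identity $T+(t\wedge a)=(T+t)\wedge(T+a)$ for $t,a\ge 0$. In the expansion of $I_\delta(\omega)(T+t)-I_\delta(\omega)(T)$, the $j$-th summand vanishes for $j\le K$ (both copies of $\omega$ are frozen at $\tau_j^\delta$ and $\sigma_j^\delta$) and reduces to $\omega((T+t)\wedge\tau_j^\delta)-\omega((T+t)\wedge\sigma_j^\delta)$ for $j\ge K+2$ (the $I_\delta(\omega)(T)$ contribution vanishes there since $T<\sigma_j^\delta$). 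These match the $k\ge 1$ summands on the left-hand side in the first case and the $k\ge 2$ summands in the second case. In the second case, the residual $j=K+1$ contribution collapses to $\omega((T+t)\wedge\tau_{K+1}^\delta)-\omega(T)$, which is exactly the $k=1$ summand $\omega(T+(t\wedge(\tau_{K+1}^\delta-T)))-\omega(T)$ on the left-hand side.

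The principal subtlety, and thus the main obstacle, is the second case: one must correctly argue that the shifted path's first excursion starts immediately at time zero and terminates at $\tau_{K+1}^\delta(\omega)-T$, rather than at some later $\tau$ of the shifted path. This requires combining the geometric inclusion $S_{2\delta}\subset S_\delta^o$ with the continuity of $\omega$ to rule out the degenerate possibility that $T$ coincides with a hitting time of $\partial S_\delta$; the first case, by contrast, is essentially a routine re-indexing.
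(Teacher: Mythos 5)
Your proof is correct and follows essentially the same route as the paper: identify the stopping times of the shifted path $\omega(T+\cdot)$ with a tail $\{\sigma_{K+k}^\delta,\tau_{K+k}^\delta\}$ of the original ones by treating the two cases $\omega(T)\in S\setminus S_\delta$ and $\omega(T)\in S_{2\delta}^0$ separately, then compare the sums term by term. Your version merely spells out in more detail the index alignment and the collapse of the $j=K+1$ term that the paper's proof asserts tersely.
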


\begin{proof} Let $\tau_0^{\delta,T}(\omega)=T$, $\sigma_k^{\delta,T}(\omega)=\inf\{t\ge \tau_{k-1}^{\delta,T}(\omega):\omega(t)\in S_{2\delta}\}$ and
$\tau_k^{\delta,T}(\omega)=\inf\{t\ge \sigma_{k}^{\delta,T}(\omega):\omega(t)\in \partial S_{\delta}\}$ for $k\ge 1$.
Condition \eqref{condition} implies that there exists $n\in{\mathbb N}_+$ such that $\sigma_n^\delta(\omega)\le \sigma_1^{\delta,T}(\omega)$, $\tau_n^\delta(\omega)=\tau_1^{\delta,T}(\omega)$, and $[\sigma_{n+k}^\delta(\omega),\tau_{n+k}^\delta(\omega)]=[\sigma_{k+1}^{\delta,T}(\omega),\tau_{k+1}^{\delta,T}(\omega)]$ for $k\ge 1$ (this can be checked by looking at the cases $\omega(T)\in S_{2\delta}^0$ and $\omega(T)\in S\setminus S_\delta$ separately). Then clearly both sides of \eqref{I:shifted} are equal to
$$\sum_{k=n+1}^\infty \left\{\omega(\tau_k^\delta(\omega)\wedge t) - \omega(\sigma_k^\delta(\omega)\wedge t)\right\} + \omega(\tau_n^\delta(\omega)\wedge t) -\omega(T\vee\sigma_n^\delta).$$

\end{proof}

\begin{lemma} For every sequence $\gamma\in\Gamma$, $T\in[0,\infty)$, and $\omega\in C_S$ such that $F_\gamma(\omega)\not=\underline 0$, we have
\begin{equation}F_\gamma(\omega(T+\cdot))=F_\gamma(\omega)(T+\cdot) - F_\gamma(\omega)(T).\label{F:shifted}\end{equation}
\end{lemma}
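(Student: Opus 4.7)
The plan is to invoke the pointwise shift identity of Lemma \ref{shift:I} along the sequence $\delta = \gamma(n)$ and then pass to the limit $n \to \infty$. Because $F_\gamma(\omega) \neq \underline{0}$, the definition of $F_\gamma$ guarantees that $I_{\gamma(n)}(\omega) \to F_\gamma(\omega)$ uniformly on every compact interval.

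The key technical step is to check that condition \eqref{condition} in Lemma \ref{shift:I} holds for $\delta = \gamma(n)$ once $n$ is sufficiently large. I would split into cases based on the location of $\omega(T) \in S$. Since $S_\delta = S + \delta e^{i\xi/2}$ is the wedge $S$ translated by $\delta$ along its interior bisector, one checks directly that $S_\delta \subset S^{o}$, and in particular $S_\delta \cap \partial S = \emptyset$ and $0 \notin S_\delta$ for every $\delta > 0$. Consequently, if $\omega(T) \in \partial S$ (including the case $\omega(T) = 0$), then $\omega(T) \in S \setminus S_\delta$ for every $\delta > 0$ and \eqref{condition} holds trivially. On the other hand, if $\omega(T) \in S^o$, then since $S_{2\delta}^0 \uparrow S^o$ as $\delta \downarrow 0$, there exists $\delta_0 > 0$ with $\omega(T) \in S_{2\delta}^0$ for all $\delta \leq \delta_0$; combined with $\gamma(n) \downarrow 0$, this yields $N \in \mathbb{N}_+$ such that \eqref{condition} holds for $\delta = \gamma(n)$ whenever $n \geq N$.

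For $n \geq N$, Lemma \ref{shift:I} gives
\[
I_{\gamma(n)}(\omega(T+\cdot))(t) = I_{\gamma(n)}(\omega)(T+t) - I_{\gamma(n)}(\omega)(T), \qquad t \geq 0.
\]
Writing $\tilde\omega(t) := F_\gamma(\omega)(T+t) - F_\gamma(\omega)(T)$, the triangle inequality yields, for each $m \in \mathbb{N}_+$,
\[
\sup_{t \leq m} \| I_{\gamma(n)}(\omega(T+\cdot))(t) - \tilde\omega(t) \| \leq 2 \sup_{s \leq T+m} \|I_{\gamma(n)}(\omega)(s) - F_\gamma(\omega)(s)\|,
\]
and the right-hand side tends to zero by the uniform convergence of $I_{\gamma(n)}(\omega)$ to $F_\gamma(\omega)$ on the compact interval $[0, T+m]$. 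Squaring, $\tilde\omega$ satisfies the defining condition of $F_\gamma$ applied to $\omega(T+\cdot)$, so $F_\gamma(\omega(T+\cdot)) = \tilde\omega$, which is exactly \eqref{F:shifted}. The only real obstacle is the geometric case analysis underlying \eqref{condition}; once that is settled, the remainder is a direct uniform-convergence argument.
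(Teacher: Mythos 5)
Your proof is correct and follows the same strategy as the paper: apply Lemma \ref{shift:I} with $\delta=\gamma(n)$ for large $n$ and then pass to the limit using uniform convergence on compacts. You simply spell out more carefully the case analysis showing that condition \eqref{condition} holds eventually (distinguishing $\omega(T)\in\partial S$ from $\omega(T)\in S^o$), which the paper compresses into the phrase ``for sufficiently large $n$.''
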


\begin{proof}
For sufficiently large $n$ we have that $\omega(T)\in S_{2\delta(n)}^0\cup(S\setminus S_\delta)$, hence by Lemma \ref{shift:I} for these values of $n$ we have
\begin{equation} I_{\gamma(n)}(\omega(T+\cdot))= I_{\gamma(n)}(\omega)(T+\cdot)-I_{\gamma(n)}(\omega)(T).\label{shifted:n}
\end{equation}
By our assumption that $F_\gamma(\omega)\not=\underline 0$ we have that $I_{\gamma(n)}(\omega)(T+\cdot)\to F_\gamma(\omega)(T+\cdot)$ as $n\to\infty$, in the topology of uniform convergence on compacts. Also, $I_{\gamma(n)}(\omega)(T)\to F_\gamma(\omega)(T)$ as $n\to\infty$. Then by \eqref{shifted:n} $I_{\gamma(n)}(\omega(T+\cdot))$ also converges as $n\to \infty$ in the topology of uniform convergence on compact sets, and by the definition of $F_\gamma$, the limit must be $F_\gamma(\omega(T+\cdot))$.
\end{proof}

The relationship \eqref{F:shifted} can be cast in the form
\begin{equation}
F_\gamma(\omega)(t)-F_\gamma(\omega)(s)=F_\gamma(\omega(s+\cdot))(t-s),\ 0\le s \le t.\label{F:additive}\end{equation}
That is, $F_\gamma$ is additive on $\{\omega\in C_S: F_\gamma(\omega\not=\underline 0\}$. The combination of \eqref{F:shifted} and \eqref{stopping} yields that for $0\le T_1\le T_2$, $\omega\in C_S$, and $\{\gamma(n),n\ge 1\}\in\Gamma$ such that $F_\gamma(\omega)\not=\underline 0$, we have
\begin{equation}F_\gamma(\omega((T_1+\cdot)\wedge T_2))(t)=F_\gamma(\omega)((T_1+t)\wedge T_2) - F_\gamma(\omega)(T_1).\label{shiftedandstopped}\end{equation}

\begin{lemma} Let $a,\lambda>0$, $\{\gamma(n),n\ge 1\}\in\Gamma$ and $\omega\in C_S$ be such that $F_\gamma(\omega)\not=\underline 0$. Then
\begin{equation}aF_\gamma(\omega)(\lambda\cdot)
=F_{a\gamma}(a\omega(\lambda\cdot)),\label{linear}\end{equation}
where $a\gamma = \{a\gamma(n),n\ge 1\}$.

\end{lemma}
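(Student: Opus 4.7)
The plan is to reduce the scaling identity for $F_\gamma$ to a corresponding scaling identity for $I_\delta$, and then pass to the limit. The crucial structural fact is that the wedge $S$ is a \emph{cone}, and the shifted regions $S_\delta = S + \delta e^{i\xi/2}$ transform covariantly: since $aS=S$ for any $a>0$, we have $aS_\delta = S + a\delta e^{i\xi/2} = S_{a\delta}$, and similarly $aS_{2\delta}=S_{2a\delta}$ and $a\partial S_\delta = \partial S_{a\delta}$. In particular, $a\omega(\lambda\cdot)\in C_S$ whenever $\omega\in C_S$, and $a\gamma\in\Gamma$ whenever $\gamma\in\Gamma$, so both sides of \eqref{linear} make sense.

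First, I would establish the scaling of the stopping times. Writing $\omega'=a\omega(\lambda\cdot)$, the defining relations $\sigma_k^{a\delta}(\omega')=\inf\{t\ge \tau_{k-1}^{a\delta}(\omega'):\omega'(t)\in S_{2a\delta}\}$ and $\tau_k^{a\delta}(\omega')=\inf\{t\ge \sigma_k^{a\delta}(\omega'):\omega'(t)\in\partial S_{a\delta}\}$ translate, via $\omega'(t)\in S_{2a\delta}\Leftrightarrow \omega(\lambda t)\in S_{2\delta}$ (and similarly for $\partial S_{a\delta}$), into the identities
\begin{equation*}
\sigma_k^{a\delta}(\omega')=\tfrac{1}{\lambda}\sigma_k^{\delta}(\omega),\qquad \tau_k^{a\delta}(\omega')=\tfrac{1}{\lambda}\tau_k^{\delta}(\omega),\qquad k\ge 1,
\end{equation*}
which one proves by induction on $k$.

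Next, I would substitute these into the definition of $I_{a\delta}$. Using $\omega'(s)=a\omega(\lambda s)$, a direct computation gives, for every $t\ge 0$,
\begin{equation*}
I_{a\delta}(\omega')(t)=\sum_{k=1}^\infty\bigl[\omega'(t\wedge\tau_k^{a\delta}(\omega'))-\omega'(t\wedge\sigma_k^{a\delta}(\omega'))\bigr]
= a\sum_{k=1}^\infty\bigl[\omega(\lambda t\wedge\tau_k^{\delta}(\omega))-\omega(\lambda t\wedge\sigma_k^{\delta}(\omega))\bigr]
= a\,I_\delta(\omega)(\lambda t).
\end{equation*}
Specializing $\delta=\gamma(n)$ (so $a\delta=(a\gamma)(n)$) yields $I_{(a\gamma)(n)}(a\omega(\lambda\cdot))(t)=a\,I_{\gamma(n)}(\omega)(\lambda t)$ for all $t\ge 0$ and $n\ge 1$.

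Finally, I would pass to the limit $n\to\infty$. By hypothesis $F_\gamma(\omega)\not=\underline 0$, so $I_{\gamma(n)}(\omega)\to F_\gamma(\omega)$ uniformly on compacts. A simple change of variable shows $I_{\gamma(n)}(\omega)(\lambda\cdot)\to F_\gamma(\omega)(\lambda\cdot)$ uniformly on any compact interval $[0,m]$ (convergence on $[0,\lambda m]$ for $I_{\gamma(n)}(\omega)$ gives it). Multiplying by $a$, the identity above shows $I_{(a\gamma)(n)}(a\omega(\lambda\cdot))\to a F_\gamma(\omega)(\lambda\cdot)$ uniformly on compacts, which by the definition of $F_{a\gamma}$ forces $F_{a\gamma}(a\omega(\lambda\cdot))=a F_\gamma(\omega)(\lambda\cdot)$, proving \eqref{linear}. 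There is no serious obstacle here — the only point requiring care is the inductive verification of the stopping-time scaling, which in turn hinges on $S$ being a cone so that $aS_\delta=S_{a\delta}$.
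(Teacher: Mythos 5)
Your proof is correct and takes essentially the same approach as the paper: establish the prelimit identity $aI_{\gamma(n)}(\omega)(\lambda\cdot)=I_{a\gamma(n)}(a\omega(\lambda\cdot))$ and then pass to the limit using $F_\gamma(\omega)\neq\underline 0$. The paper states that identity with ``one can easily see that''; you merely supply the omitted verification via the cone property $aS_\delta=S_{a\delta}$ and the inductive scaling of the stopping times.
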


\begin{proof}
One can easily see that
\begin{equation*}aI_{\gamma(n)}(\omega)(\lambda\cdot)=I_{a\gamma(n)}(a\omega(\lambda\cdot)),\ n\ge 1.\end{equation*}
But the assumption $F_\gamma(\omega)\not=\underline 0$ implies that the expression on left-hand side converges to $aF_\gamma(\omega)(\lambda\cdot)$ as $n\to\infty$ in the topology of uniform convergence on compacts. Then the expression on the right-hand side also converges, and by the definition of $F_{a\gamma}$ the limit must be $F_{a\gamma}(a\omega(\lambda\cdot))$.
\end{proof}

Now recall the process $W$ from Proposition  \ref{Proposition:Brownian:Motion}  which also appears implicitly in \eqref{decomposition} via \eqref{X:def}. The following lemma is instrumental for the developments in this section. It specifies the exact form of the dependence of $W(\cdot,\omega)$ on $\omega$.

\begin{lemma}\label{sub} For every $z\in S$  and $\{\delta(n),n\ge 1\}\in\Gamma$ there exists a sequence $\{\gamma^z(n),n\ge 1\}\subset \{\delta(n),n\ge 1\}$ such that
\begin{equation}P^z\left(F_{\gamma^z}(\omega)=W(\cdot,\omega)\right)=1.\label{gamma:z}\end{equation}
In addition, the following three statements hold:\\
~\\
(a) If for some $z\in S$ and $\{\gamma(n), n\ge 1\}\in \Gamma$ we have $P^z(F_\gamma(\omega)=\underline 0)=0$ then
\begin{equation} P^z(F_\gamma(\omega)=W(\cdot,\omega))=1;\label{more}\end{equation}
(b) For every stopping time $\tau$ on $(C_s,{\mathcal F},{\mathcal F}_t)$ and $z\in S$,
\begin{equation}E^z\left[P^{Z(\tau)}\left(F_{\gamma^z}(\omega)=W(\cdot,\omega)\right)\right]=1;\label{stoppingtime}\end{equation}
(c) For every $z\in S$,
\begin{equation} P^0\left(F_{\gamma^z}(\omega)=W(\cdot,\omega)\right)=1.\label{z:is:zero}\end{equation}

\end{lemma}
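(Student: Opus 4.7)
The plan is to construct the subsequence $\gamma^z$ by a diagonal extraction from the $\Upsilon^{c,z}_2$-convergence of $W^{\delta(n)}$ to $W$, and then to derive (a)--(c) in sequence. The role of (a) is as a uniqueness-of-limits step that identifies $F_\gamma$ with $W$ whenever $F_\gamma\ne\underline 0$ $P^z$-a.s.; (b) uses the strong Markov property together with the shift identity of Lemma \ref{shift:I} to transfer the statement $F_{\gamma^z}\ne\underline 0$ from $P^z$ to $P^{Z(\tau)}$ and then invokes (a); and (c) is (b) applied to $\tau=\tau_0$.

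To construct $\gamma^z$ and establish \eqref{gamma:z}, note that $W^{\delta(n)}_i-W_i$ is a continuous $L^2$-martingale null at zero, so Doob's maximal inequality and Proposition \ref{Proposition:Brownian:Motion} give, for each $m\in\mathbb{N}_+$ and $i=1,2$,
\[
E^z\!\left[\sup_{t\le m}(W^{\delta(n)}_i(t)-W_i(t))^2\right]\le 4\,E^z\!\left[(W^{\delta(n)}_i(m)-W_i(m))^2\right]\longrightarrow 0 \quad\text{as } n\to\infty.
\]
Extracting an a.s.\ convergent sub-subsequence for $m=1$, then $m=2$, and so on, and diagonalizing yields a subsequence $\gamma^z$ of $\delta$ (which, being a subsequence of an element of $\Gamma$, automatically belongs to $\Gamma$) such that $\sup_{t\le m}\|I_{\gamma^z(n)}(\omega)(t)-W(t,\omega)\|^2\to 0$ $P^z$-a.s.\ for every $m$. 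By the definition of $F_{\gamma^z}$, \eqref{gamma:z} follows.

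For (a), Proposition \ref{Proposition:Brownian:Motion} applied to $\gamma\in\Gamma$ yields $I_{\gamma(n)}(\omega)(t)\to W(t,\omega)$ in $L^2(P^z)$ (hence in probability) for each $t\ge 0$. Under the hypothesis $P^z(F_\gamma=\underline 0)=0$, we also have $I_{\gamma(n)}(\omega)(t)\to F_\gamma(\omega)(t)$ $P^z$-a.s.\ for each $t$, so uniqueness of limits forces $F_\gamma(\omega)(t)=W(t,\omega)$ $P^z$-a.s.\ at each fixed $t$; path continuity then upgrades this to indistinguishability, giving \eqref{more}. For (b), fix a $P^z$-a.s.\ finite stopping time $\tau$. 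Since the excluded region $S_{\gamma^z(n)}\setminus S_{2\gamma^z(n)}^0$ shrinks to $\partial S$ as $n\to\infty$, while $\partial S\subset S\setminus S_{\gamma^z(n)}$ for every $n$, we have $Z(\tau,\omega)\in S_{2\gamma^z(n)}^0\cup(S\setminus S_{\gamma^z(n)})$ for all $n$ sufficiently large (depending on $\omega$). Lemma \ref{shift:I} then yields, eventually in $n$,
\[
I_{\gamma^z(n)}(\omega(\tau+\cdot))(t)=I_{\gamma^z(n)}(\omega)(\tau+t)-I_{\gamma^z(n)}(\omega)(\tau),\quad t\ge 0.
\]
On the $P^z$-full-measure event $\{F_{\gamma^z}(\omega)=W(\cdot,\omega)\}$, the right-hand side converges uniformly on compact $t$-intervals to $W(\tau+\cdot,\omega)-W(\tau,\omega)$, and hence $F_{\gamma^z}(\omega(\tau+\cdot))\ne\underline 0$ $P^z$-a.s. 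The strong Markov property gives $E^z[P^{Z(\tau)}(F_{\gamma^z}\ne\underline 0)]=P^z(F_{\gamma^z}(\omega(\tau+\cdot))\ne\underline 0)=1$, whereupon (a) identifies the limit as $W$ for $P^z$-a.e.\ realization of $Z(\tau)$, and \eqref{stoppingtime} follows. Finally, (c) is (b) with $\tau=\tau_0$: since $L(t)\to\infty$ $P^z$-a.s.\ for $1<\alpha<2$, $\tau_0<\infty$ and $Z(\tau_0)=0$ $P^z$-a.s., so \eqref{z:is:zero} reduces to $P^0(F_{\gamma^z}=W)=E^z[P^{Z(\tau_0)}(F_{\gamma^z}=W)]=1$.

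The main obstacle is the passage from the identity $F_{\gamma^z}=W$ $P^z$-a.s.\ to the corresponding identity under $P^{Z(\tau)}$. Strong Markov transfers measurable \emph{path events} on $C_S$, but $W$ is defined only up to indistinguishability and is not a priori a deterministic functional of the path, so one cannot directly move the identity itself. The resolution, embedded in (b), is to first use the shift identity of Lemma \ref{shift:I} to extract the genuinely measurable path event $\{F_{\gamma^z}\ne\underline 0\}$, transfer that via the strong Markov property, and only then apply (a) to identify the limit as $W$.
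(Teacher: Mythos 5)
Your proposal is correct and follows essentially the same route as the paper: extract an a.s.-uniformly-convergent subsequence from the $L^2$ convergence via Doob's maximal inequality (your diagonal extraction versus the paper's single weighted-sum extraction is an immaterial difference), prove (a) by uniqueness of limits, prove (b) by combining the shift identity with the strong Markov property applied to the measurable event $\{F_{\gamma^z}\ne\underline 0\}$ before invoking (a), and obtain (c) from (b) with $\tau=\tau_0$. Your closing remark about why one must transfer $\{F_{\gamma^z}\ne\underline 0\}$ rather than the identity $F_{\gamma^z}=W$ itself is precisely the point of the paper's argument.
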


\begin{proof}
Let $\{\delta(n),n\ge 1\}\in\Gamma$ be arbitrary.
By the concavity of the function $x\mapsto \sqrt{x}\wedge 1$, \eqref{I},   Doob's Maximal Inequality,  and \eqref{L2limit}
\begin{equation}
\begin{aligned}
&\ \lim_{n\to \infty}\sum_{m=1}^\infty {1\over 2^m} E^z\left[\sup_{t\le m}\left\{\|I_{\delta(n)}(\omega)(t)-W(t,\omega)\|\wedge 1\right\}\right]\\
&\le \lim_{n\to \infty}\sum_{m=1}^\infty {1\over 2^m}\left\{\left( E^z\left[\sup_{t\le m}\left\{\|I_{\delta(n)}(\omega)(t)-W(t,\omega)\|^2\right\}\right]\right)^{1/2}\wedge 1\right\}\\
&\le \lim_{n\to \infty}\sum_{m=1}^\infty {1\over 2^m}\left\{\left( E^z\left[\sup_{t\le m}\left\{\|W^{\delta(n)}(t,\omega)-W(t,\omega)\|^2\right\}\right]\right)^{1/2}\wedge 1\right\}\\
&\le \lim_{n\to \infty}\sum_{m=1}^\infty {1\over 2^m}\left\{\left( 4\sum_{i=1,2}E^z\left[\left(W_i^{\delta(n)}(m)-W_i(m)\right)^2\right]\right)^{1/2}\wedge 1\right\}\\
&=0.\label{long}
\end{aligned}
\end{equation}
Then there exists a subsequence $\{\gamma^z(n),n\ge 1\}\subset \{\delta(n),n\ge 1\}$ such that
$$\lim_{n\to 0}\sum_{m=1}^\infty {1\over 2^m} \sup_{t\le m}\left\{\|I_{\gamma^z(n)}(\omega)(t)-W(t,\omega)\|\wedge 1\right\}= 0,$$
 $P^z$-a.s., which implies \eqref{gamma:z}. Next we are going to show the statement in (a). From our assumption that $P^z(F_\gamma(\omega)=\underline 0)=0$, it follows that $\lim_{n\to\infty}I_{\gamma(n)}(\omega)= F_\gamma(\omega)$ in the topology of uniform convergence on compacts, $P^z$-a.s.
On the other hand, similarly to \eqref{long} one can show that
$$\lim_{n\to 0}\sum_{m=1}^\infty {1\over 2^m} E^z\left[\sup_{t\le m}\left\{\|I_{\gamma(n)}(\omega)(t)-W(t,\omega)\|\wedge 1\right\}\right]=0,$$
which implies \eqref{more}.
 Next we are going to show part (b). By \eqref{F:shifted} and \eqref{gamma:z} we have $P^z$-a.s.
\begin{equation}F_{\gamma^z}(\omega(\tau+\cdot))
=F_{\gamma^z}(\omega)(\tau+\cdot)-F_{\gamma^z}(\omega)(\tau)=W(\tau+\cdot)-W(\tau)\not=\underline 0,\label{notzero}\end{equation}
thus by the strong Markov property
$$1=P^z\left(F_{\gamma^z}(\omega(\tau+\cdot))\not=\underline 0\right)=
P^z\left(F_{\gamma^z}(Z(\tau+\cdot))\not=\underline 0\right)=
E^z\left[P^{Z(\tau)}\left(F_{\gamma^z}(Z(\cdot))\not=\underline 0\right)\right].$$
Therefore there exists a set $S^\tau\in {\mathcal B}_S$ such that $P^z(Z(\tau)\in S^\tau)=1$ and
$$P^{z'}\left(F_{\gamma^z}(Z(\cdot))\not=\underline 0\right)=1 \ \hbox{for all}\ z'\in S^\tau$$
 ($S^\tau$ is a Borel set by Varadhan and Williams \cite{varadhan1985brownian}, Corollary 3.3). Then by part (a) we have $P^{z'}(F_{\gamma^z}(\omega)=W(\cdot,\omega))=1$ for all $z'\in S^\tau$, which implies \eqref{stoppingtime}.  Finally we show part (c). By part (a) all we need to show that $P^0(F_{\gamma^z}(\omega)=\underline 0)=0$. By the strong Markov property the left-hand side is equal to $P^z(F_{\gamma^z}(Z(\tau_0+\cdot))=\underline 0)$, which is indeed 0 by \eqref{notzero}.
\end{proof}

For every $z\in S$, we fix a sequence $\{\gamma^z(n),n\ge 1\}$ such that \eqref{gamma:z} holds. Recall the class ${\mathcal A}$ from Definition \ref{Definition:Of:The:Set:A}.

\begin{proposition}\label{p3} For each $z\in S$, we have $P^z$-a.s. that $\left(Z(\cdot\wedge \tau_0),Y(\cdot\wedge\tau_0)\right)\in {\mathcal A}$.
\end{proposition}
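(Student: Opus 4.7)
The plan is to transfer the claim through the law equivalence of Proposition \ref{williams} to $(C_{\mathbb R^2},Q^z)$, where the pair $(\tilde Z,\tilde Y)$ already belongs to $\mathcal A$ by Proposition \ref{williams}. The bridge between the two sides is the measurable functional $F_{\gamma^z}$ supplied by Lemma \ref{sub}. Using \eqref{X:def}, \eqref{gamma:z}, and \eqref{stopping}, $P^z$-a.s.\ one has
\begin{equation*}
Y(\cdot\wedge\tau_0)=Z(\cdot\wedge\tau_0)-F_{\gamma^z}(Z(\cdot\wedge\tau_0))-z.
\end{equation*}
Since the map $\omega\mapsto(\omega,\omega-F_{\gamma^z}(\omega)-\omega(0))$ is measurable from $(C_S,\mathcal M)$ to $(C_S\times C_{\mathbb R^2},\mathcal M\times\tilde{\mathcal M})$, Proposition \ref{williams} yields
\begin{equation*}
P^z\!\left((Z(\cdot\wedge\tau_0),Y(\cdot\wedge\tau_0))\in\mathcal A\right)=Q^z\!\left((\tilde Z,\tilde Z-F_{\gamma^z}(\tilde Z)-z)\in\mathcal A\right),
\end{equation*}
so the task reduces to proving that $\tilde Z-F_{\gamma^z}(\tilde Z)-z=\tilde Y$ $Q^z$-a.s.; together with $(\tilde Z,\tilde Y)\in\mathcal A$ this would close the argument. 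The degenerate case $z=0$ is trivial, since $\tau_0=0$ $P^0$-a.s., both stopped processes vanish, and $(\underline 0,\underline 0)\in\mathcal A$ via $u\equiv 0$.

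The core step is to identify $F_{\gamma^z}(\tilde Z)$ on $[0,\tilde\tau_0]$ with $\tilde X-z$. Because $\tilde Z=\tilde X+\tilde Y$ on $[0,\tilde\tau_0]$ and, by Condition \ref{Item:Two:The:Set:A} of Definition \ref{Definition:Of:The:Set:A}, $\tilde Y$ is flat on every interior interval $[\sigma_k^{\gamma^z(n)}(\tilde Z),\tau_k^{\gamma^z(n)}(\tilde Z)]$ (since there $\tilde Z\in S_{\gamma^z(n)}\subset S^o$), a telescoping argument gives
\begin{equation*}
I_{\gamma^z(n)}(\tilde Z)(t)=\tilde X(t)-z-R_n(t),\qquad t\le\tilde\tau_0,
\end{equation*}
where $R_n$ is assembled from $\tilde X$-increments over the complementary boundary intervals and hence is a continuous two-dimensional $Q^z$-martingale whose per-component quadratic variation is dominated by $\int_0^{t\wedge\tilde\tau_0}\mathbf 1\{\tilde Z(s)\in S\setminus S_{2\gamma^z(n)}\}\,ds$. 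As $\gamma^z(n)\downarrow 0$, dominated convergence together with Lemma 4.2 of \cite{williams1985recurrence}, transported from $P^z$ to $Q^z$ via Proposition \ref{williams}, sends this expectation to $0$; Doob's maximal inequality then extracts a subsequence $n_k$ along which $\sup_{s\le T}|R_{n_k}(s\wedge\tilde\tau_0)|\to 0$ $Q^z$-a.s.\ for every $T>0$. In parallel, the $P^z$-a.s.\ uniform-on-compacts convergence of $I_{\gamma^z(n)}(Z(\cdot\wedge\tau_0))$ to $W(\cdot\wedge\tau_0)$ is a path-measurable event for $Z(\cdot\wedge\tau_0)$, so Proposition \ref{williams} transfers it to $Q^z$-a.s.\ UCC convergence of $I_{\gamma^z(n)}(\tilde Z)$ to a limit which, by the definition of $F_{\gamma^z}$, must be $F_{\gamma^z}(\tilde Z)$. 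Matching the two limits along the common subsequence $n_k$ pins down $F_{\gamma^z}(\tilde Z)(s)=\tilde X(s)-z$ for all $s\le\tilde\tau_0$, $Q^z$-a.s.

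To extend past $\tilde\tau_0$, note that $\tilde Z(\tilde\tau_0+\cdot)\equiv 0$ forces $I_\delta(\tilde Z(\tilde\tau_0+\cdot))\equiv 0$ for every $\delta>0$, so $F_{\gamma^z}(\tilde Z(\tilde\tau_0+\cdot))=\underline 0$; the shift identity \eqref{F:shifted} then gives $F_{\gamma^z}(\tilde Z)(t)=F_{\gamma^z}(\tilde Z)(\tilde\tau_0)=\tilde X(\tilde\tau_0)-z$ for all $t\ge\tilde\tau_0$. Combined with \eqref{A4.1} and $\tilde Z(\tilde\tau_0)=0$, this yields $\tilde Z-F_{\gamma^z}(\tilde Z)-z=\tilde Y$ on all of $[0,\infty)$, completing the reduction from the first paragraph. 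I expect the main technical hurdle to be the martingale/occupation-time step of the second paragraph: one must carefully import Williams's zero-boundary-occupation property from $P^z$ to $Q^z$ through Proposition \ref{williams}, and then reconcile the subsequential $L^2$ vanishing of $R_n$ with the a~priori $Q^z$-a.s.\ UCC limit of $I_{\gamma^z(n)}(\tilde Z)$ in order to pin that limit down to $\tilde X-z$.
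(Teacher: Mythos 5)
Your proposal is correct and follows essentially the same route as the paper: reduce via the law identity of Proposition \ref{williams} to showing $F_{\gamma^z}(\tilde Z)=\tilde X(\cdot\wedge\tilde\tau_0)-z$ under $Q^z$, then establish this by writing $I_{\gamma^z(n)}(\tilde Z)$ as a stochastic integral of $\tilde X$ over the interior intervals (using flatness of $\tilde Y$ there), bounding the error's quadratic variation by the occupation time of $S\setminus S_{2\gamma^z(n)}$, invoking Lemma 4.2 of Williams transported to $Q^z$, and applying Doob's inequality to extract an a.s.\ convergent subsequence. The only cosmetic differences are that you additionally transfer the full-sequence UCC convergence to $Q^z$ in order to match limits (the paper simply works with the extracted subsequence $\alpha$, for which $F_\alpha=W$ still holds $P^z$-a.s.) and that you handle $t\ge\tilde\tau_0$ via the shift identity rather than by noting the stopping at $\tilde\tau_0$ is automatic in $I_\delta(\tilde Z)$.
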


\begin{proof} \eqref{gamma:z} implies that for any subsequence $\{\alpha(n), n\ge 1\}\subset \{\gamma^z(n), n\ge 1\}$ we have $F_\alpha(\omega)=W(\cdot,\omega)$, $P^z$-a.s., and by \eqref{stopping} and by $Z(\cdot,\omega)=\omega$ we have
$$F_\alpha(Z(\cdot\wedge\tau_0))=W(\cdot\wedge\tau_0)=X(\cdot\wedge\tau_0)-z,$$
$P^z$-a.s. Then it is sufficient to show that there exists a sequence $\{\alpha(n),n\ge 1\}\subset \{\gamma^z(n),n\ge 1\}$ such that
\begin{equation}F_\alpha(\tilde Z(\cdot))=\tilde X(\cdot\wedge\tilde\tau_0)-z,\ Q^z\hbox{-a.s.}\label{sufficient}\end{equation}
Indeed, suppose that \eqref{sufficient} holds, then
\begin{eqnarray*} (\tilde Z(\cdot),\tilde Y(\cdot))=(\tilde Z(\cdot), \tilde Z(\cdot)-\tilde X(\cdot\wedge\tilde\tau_0))&=&
(\tilde Z(\cdot), \tilde Z(\cdot)-F_\alpha(\tilde Z(\cdot))-z)
\buildrel \rm d\over =\\
( Z(\cdot\wedge\tau_0),  Z(\cdot\wedge\tau_0)-F_\alpha(Z(\cdot\wedge\tau_0))-z) &=&
( Z(\cdot\wedge\tau_0), Y(\cdot\wedge\tau_0)),
\end{eqnarray*}
where the sign $\buildrel\rm d\over =$ means that the law of the expression on the left-hand side under $Q^z$ agrees with the law of the expression on the right-hand side under $P^z$. Then the statement of the present proposition follows from Proposition \ref{williams}. For the rest of this proof we shall prove \eqref{sufficient} which amounts to showing that for some $\{\alpha(n),n\ge 1\}\subset \{\gamma^z(n),n\ge 1\}$ and every $m\in{\mathbb N}_+$ we have
\begin{equation}\sup_{t\le m}\left\|I_{\alpha(n)}(\tilde Z(\cdot))(t) -\left(\tilde X(t\wedge\tilde\tau_0)-z\right)\right\|^2\to 0\label{also:sufficient}\end{equation}
as $n\to\infty$, $Q^z$-a.s.

Analog to $\tau_0^\delta, \tau_1^\delta,\sigma_1^\delta,\tau_2^\delta,\sigma_2^\delta \dots$ for $\delta>0$ we define the stopping times  $\tilde\tau_0^\delta=0$,
$\tilde \sigma_k^\delta=\inf\{t\ge\tilde \tau_{k-1}^\delta: \tilde Z(t)\in S_{2\delta}\}$ and $\tilde\tau_k^\delta=\inf\{t\ge \tilde\sigma_k^\delta: \tilde Z(t)\in \partial S_{\delta}\}, k\ge 1$. In the rest of the proof  we shall drop the superscript $z$ from $\gamma^z(n)$ whenever it leads to double superscript, and   write $\tilde\sigma^{\gamma(n)}$ and $\tilde\tau^{\gamma(n)}$ instead.

By the fact that $\tilde Z$ is flat on $[\tilde \tau_0,\infty)$, we have
\begin{eqnarray*}
I_{\gamma^z(n)}(\tilde Z(\cdot))(t\wedge m)&=&\sum_{k=1}^\infty\left[\tilde Z(t\wedge\tilde\tau_0\wedge\tilde\tau_k^{\gamma(n)}\wedge m) - \tilde Z(t\wedge\tilde\tau_0 \wedge\tilde\sigma_k^{\gamma(n)}\wedge m) \right].
\end{eqnarray*}
Moreover, since $\tilde Y$ is flat on $[\tilde\sigma_k^\delta\wedge m,\tilde\tau_k^\delta\wedge m]$,  we can cast this in the form
\begin{eqnarray*}
I_{\gamma^z(n)}(\tilde Z(\cdot))(t\wedge m)&=&\sum_{k=1}^\infty\left[\tilde X(t\wedge \tilde\tau_0\wedge\tilde\tau_k^{\gamma(n)}\wedge m) - \tilde X(t\wedge\tilde\tau_0\wedge\tilde\sigma_k^{\gamma(n)}\wedge m) \right] \nonumber \nonumber \\
&=&\int_{0}^t \sum_{k=1}^\infty 1\{u\in[ \tilde\sigma_k^{\gamma(n)}\wedge\tilde\tau_0\wedge m,  \tilde\tau_k^{\gamma(n)}\wedge\tilde\tau_0\wedge m]\}  d\tilde X(u).
\end{eqnarray*}
Hence by the the Dominated Convergence Theorem, the fact that $Z(\cdot\wedge\tau_0)$ has the same law as $\tilde Z(\cdot\wedge\tilde\tau_0)$, and by Williams \cite{williams1985recurrence}, Lemma 4.2, it follows that
\begin{eqnarray*}
&&\lim_{n\to\infty} E^z_Q\left[ \left\|I_{\gamma(n)}(\tilde Z(\cdot))(\tilde\tau_0\wedge m)-\left(\tilde X(\tilde\tau_0\wedge m)-z\right)\right\|^2\right] \nonumber\\
&=& 2\lim_{n\to\infty}E_Q^z\left[\sum_{k=0}^\infty \int_0^{\tilde\tau_0\wedge m} 1\{u\in [\tilde\tau_k^{\gamma(n)}, \tilde\sigma_{k+1}^{\gamma(n)}] \}du\right]  \nonumber\\
&\leq& 2\lim_{n\to\infty}E_Q^z\left[ \int_0^{\tilde\tau_0\wedge m} 1\{\tilde Z(u)\in S\setminus S_{2\gamma^z(n)}\}du\right]  \nonumber\\
&=& 2E_Q^z \left[\int_0^{\tilde\tau_0\wedge m} 1\{\tilde Z(u)\in\partial S\}du \right]  \nonumber\\
& =&
2 E^z \left[\int_0^{\tau_0\wedge m} 1\{ Z(u)\in\partial S\}du \right] \nonumber \\
&=&0.
\end{eqnarray*}
Then Doob's inequality applied to the square-integrable martingale $I_{\gamma(n)}(\tilde Z\wedge m)(\cdot)-(\tilde X(\cdot\wedge\tilde\tau_0\wedge m)-z)$ gives
\begin{eqnarray*}
&&E_Q^z\left[\sup_{t\le m}\left\{\left\|I_{\gamma(n)}(\tilde Z(\cdot))(t\wedge\tilde\tau_0)-\left(\tilde X(t\wedge\tilde\tau_0)-z\right)\right\|^2\right\}\right]\\
&\leq&4E_Q^z\left[\left\|I_{\gamma(n)}(\tilde Z(\cdot))(\tilde\tau_0\wedge m )-\left(\tilde X(\tilde\tau_0\wedge m)-z\right)\right\|^2\right]\to 0, \hbox{ as }n\to\infty,
\end{eqnarray*}
and so the existence of a subsequence
$\{\alpha(n),n\geq 1\}\subset\{\gamma^z(n),n\geq 1\}$ follows such that \eqref{also:sufficient} holds.
\end{proof}

\begin{proposition}\label{main}
For each $z\in S$ , and each  excursion interval $[G_i,D_i]$ of $Z$ away from the origin, we have $P^z$-a.s. that
\begin{equation*} \left(Z\left(\left(G_i+\cdot\right)\wedge D_i\right),  Y\left(\left(G_i+\cdot\right)\wedge D_i\right)\right)\in{\mathcal A}.\end{equation*}
\end{proposition}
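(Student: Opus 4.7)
The plan is to reduce Proposition \ref{main} to Proposition \ref{p3} by applying the strong Markov property of $Z$ at a sequence of stopping times that approximate $G_i$ from above, and then to pass to the limit using the closedness of $\mathcal{A}$ under uniform convergence on compact sets (established earlier in this section). The principal obstacle is that $G_i$ is a last-exit time from the origin rather than an $\mathcal{F}_t$-stopping time, so no direct strong Markov argument is available at $G_i$; the approximating stopping times circumvent this, while continuity of $Y$ and closedness of $\mathcal{A}$ handle the passage to the limit.

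For each $\delta>0$, I would set $\rho_0^\delta:=0$ and define recursively the $\mathcal{F}_t$-stopping times
\begin{equation*}
\eta_k^\delta:=\inf\{t\ge \rho_{k-1}^\delta:|Z(t)|\ge\delta\},\qquad \rho_k^\delta:=\inf\{t\ge \eta_k^\delta:Z(t)=0\},\qquad k\ge 1.
\end{equation*}
For each excursion interval $(G_i,D_i)$ whose maximum height exceeds $\delta$, exactly one $k$ satisfies $(\eta_k^\delta,\rho_k^\delta)\subset (G_i,D_i]$; for that $k$, $\rho_k^\delta=D_i$ while $\eta_k^\delta=\inf\{t>G_i:|Z(t)|\ge\delta\}$ decreases to $G_i$ as $\delta\downarrow 0$. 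Moreover $|Z(\eta_k^\delta)|=\delta>0$, so Proposition \ref{p3} will be invoked at a starting point away from the origin.

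Next, by the strong Markov property of $Z$ at $\eta_k^\delta$, combined with Lemma \ref{sub}(b) and the shift identity \eqref{F:shifted} (which together ensure that the role of $Y$ under the shifted measure $P^{Z(\eta_k^\delta)}$ is played by $Y(\eta_k^\delta+\cdot)-Y(\eta_k^\delta)$ under $P^z$), Proposition \ref{p3} would yield, $P^z$-a.s.\ for every $k\ge 1$,
\begin{equation*}
\bigl(Z((\eta_k^\delta+\cdot)\wedge \rho_k^\delta),\ Y((\eta_k^\delta+\cdot)\wedge \rho_k^\delta)-Y(\eta_k^\delta)\bigr)\in\mathcal{A}.
\end{equation*}
Fixing a sequence $\delta_n\downarrow 0$ and intersecting the countably many $P^z$-null sets indexed by $(n,k)$, this relation holds simultaneously for all $n,k\ge 1$ on a single $P^z$-full event.

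On that event, for each excursion $(G_i,D_i)$ and each $n$ large enough, let $\eta_n:=\eta_{k(n,i)}^{\delta_n}$, so that $\eta_n\downarrow G_i$ and $\rho_{k(n,i)}^{\delta_n}=D_i$. Continuity of $Y$ then gives uniform convergence on compact sets to $(Z((G_i+\cdot)\wedge D_i),\,Y((G_i+\cdot)\wedge D_i)-Y(G_i))$, which therefore lies in $\mathcal{A}$ by closedness. Finally, to remove the shift by $Y(G_i)$, I would use that the matrix $R=[v_1\,|\,v_2]$ is invertible throughout the regime $1<\alpha<2$: a direct computation gives $\det R=\sin((\alpha-1)\xi)/(\cos\theta_1\cos\theta_2)$, and the constraints $|\theta_j|<\pi/2$ together with $\alpha>1$ force $\theta_1+\theta_2<\pi$, hence $\xi<\pi$ and $0<(\alpha-1)\xi<\pi$. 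Writing the limiting $y$-coordinate as $Ru'$ with $u'$ non-decreasing and satisfying the support condition, the function $u:=u'+R^{-1}Y(G_i)$ then satisfies $Y((G_i+\cdot)\wedge D_i)=Ru$ with the same monotonicity and support properties, yielding $(Z((G_i+\cdot)\wedge D_i),\,Y((G_i+\cdot)\wedge D_i))\in\mathcal{A}$ as required.
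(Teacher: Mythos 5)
Your proposal is correct and follows essentially the same route as the paper: the paper likewise approximates $G_i$ from above by stopping times (it uses $V_k^n=U_k^n+1/n$ for excursions of length $>1/n$ rather than your first-hitting times of $\{|z|\ge\delta\}$), applies the strong Markov property together with Lemma \ref{sub}(b), the shift identity \eqref{shiftedandstopped} and Proposition \ref{p3}, and absorbs the additive constant $Y(G_i)$ into $u$ via the invertibility of $R$. Your explicit passage to the limit through the closedness of $\mathcal{A}$ under uniform convergence on compacts is a slightly more detailed rendering of the step the paper dispatches with ``clearly it is sufficient,'' but the argument is the same in substance.
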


\begin{proof}

Let $\{(U^n_k,T^n_k), k\ge 1\}$ be the sequence of excursion intervals of $Z$ away from zero with length strictly larger than $1/n$, i.e., $T^n_k-U^n_k>1/n$ for every $n\in{\mathbb N}_+$, and let $V_k^n=U_k^n+1/n$.
 Clearly it is sufficient to show that for all $k,n\in{\mathbb N}_+$ and $z \in S$,
\begin{equation*}P^z\left(\left(Z\left(\left(V_k^n+\cdot\right)\wedge T_k^n\right), Y\left(\left(V_k^n+\cdot\right)\wedge T_k^n\right)\right)\in {\mathcal A}\right)=1,
\end{equation*}
which is the same as
\begin{equation}P^z\left(\left(Z\left(\left(V_k^n+\cdot\right)\wedge T_k^n\right), Z\left(\left(V_k^n+\cdot\right)\wedge T_k^n\right)
-X\left(\left(V_k^n+\cdot\right)\wedge T_k^n\right)
\right)\in {\mathcal A}\right)=1.\label{new}\end{equation}
By \eqref{gamma:z} and \eqref{shiftedandstopped} we have $P^z$-a.s. that
$$X\left(\left(V_k^n+t\right)\wedge T_k^n\right)= F_{\gamma^z}(Z(\cdot)) \left(\left(V_k^n+t\right)\wedge T_k^n\right)+z =
F_{\gamma^z} \left(Z \left(\left(V_k^n+\cdot\right)\wedge T_k^n\right)\right)(t) +F_{\gamma^z}\left(Z(\cdot)\right)(V_k^n)+z,$$
hence we can replace the $X\left(\left(V_k^n+\cdot\right)\wedge T_k^n\right)$ term in \eqref{new} by $F_{\gamma^z} \left(Z \left(\left(V_k^n+\cdot\right)\wedge T_k^n\right)\right)$, since the terms $F_{\gamma^z}\left(Z(\cdot)\right)(V_k^n)+z$ have no impact on the membership in ${\mathcal A}$. Thus  the left-hand side of \eqref{new} is equal to
\begin{equation*}P^z\left(\left(Z\left(\left(V_k^n+\cdot\right)\wedge T_k^n\right), Z\left(\left(V_k^n+\cdot\right)\wedge T_k^n\right)- F_{\gamma^z} \left(Z \left(\left(V_k^n+\cdot\right)\wedge T_k^n\right)\right)\in {\mathcal A}\right)\right),\end{equation*}
and by the strong Markov property and \eqref{stopping} this is equal to
$$E^z\left[P^{Z(V_k^n)}\left(\left(Z(\cdot\wedge\tau_0), Z(\cdot\wedge\tau_0)- F_{\gamma^z}\left(Z(\cdot\wedge\tau_0)\right)\right)\in {\mathcal A}\right)\right]$$
\begin{equation}=E^z\left[P^{Z(V_k^n)}\left(\left(Z(\cdot\wedge\tau_0), Z(\cdot\wedge\tau_0)- F_{\gamma^z}\left(Z(\cdot)\right)(t\wedge\tau_0)\right)\in {\mathcal A}\right)\right].\label{step}\end{equation}
When applying \eqref{stopping} above, we needed the condition $E^z\left[P^{Z(V_k^n)}\left(F_{\gamma^z}\left(Z(\cdot)\right)\not=\underline 0\right)\right]$, but this follows from \eqref{stoppingtime}. Applying  \eqref{stoppingtime} and Proposition \ref{p3}, we get that \eqref{step} is equal to
 $$E^z\left[P^{Z(V_k^n)}\left(\left(Z(\cdot\wedge\tau_0), Z(\cdot\wedge\tau_0)-X(\cdot\wedge\tau_0)\right)\in {\mathcal A}\right)\right]=1,$$
as desired.
\end{proof}

Now we have the following lemma.

\begin{lemma}Suppose that $(z,y) \in \mathcal{A}$ and $T > 0$ are such that $ z(s) \neq 0$ for $s \in (0,T)$.
Then, letting $u=(u_1,u_2)$ be the non-decreasing function from Definition \ref{Definition:Of:The:Set:A}, we have that for $t\in [0,T]$,
\begin{eqnarray}
V_1(y,[0,t]) &=& \| v_1 \|(u_1(t)-u_1(0)) + \|v_2\|(u_2(t)-u_2(0)). \label{display:variation:of:y:over:excursion}
\end{eqnarray}
\end{lemma}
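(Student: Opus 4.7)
The plan is to show two inequalities. Write $y(s)=v_1 u_1(s)+v_2 u_2(s)$ throughout, using the decomposition guaranteed by Definition \ref{Definition:Of:The:Set:A}. For the upper bound, for any partition $\{t_i\}$ of $[0,t]$ the triangle inequality yields
\begin{equation*}
\sum_i\|y(t_i)-y(t_{i-1})\|\le \|v_1\|\sum_i (u_1(t_i)-u_1(t_{i-1}))+\|v_2\|\sum_i (u_2(t_i)-u_2(t_{i-1})),
\end{equation*}
and the telescoping sums on the right collapse to $\|v_1\|(u_1(t)-u_1(0))+\|v_2\|(u_2(t)-u_2(0))$. Taking the supremum over partitions gives the $\le$ direction of \eqref{display:variation:of:y:over:excursion}.

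For the reverse inequality, the idea is to produce partitions that are fine enough so that on each subinterval only one of $u_1,u_2$ moves. Fix $0<\varepsilon<t'< \min(t,T)$. Since $z(s)\ne 0$ on $(0,T)$ and $\partial S_1\cap\partial S_2=\{0\}$, for every $s\in[\varepsilon,t']$ we have $z(s)\notin\partial S_1$ or $z(s)\notin\partial S_2$. Thus the two open sets $U_j=\{s:z(s)\notin\partial S_j\}$ for $j=1,2$ form an open cover of the compact interval $[\varepsilon,t']$. By the Lebesgue number lemma there exists $\eta>0$ such that any partition $\pi=\{\varepsilon=t_0<\cdots<t_n=t'\}$ with mesh below $\eta$ has the property that each subinterval $[t_{i-1},t_i]$ lies entirely in $U_1$ or entirely in $U_2$.

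For such a partition, assign each index $i$ a label $j(i)\in\{1,2\}$ with $[t_{i-1},t_i]\subset U_{3-j(i)}$. By Condition \ref{Item:Two:The:Set:A} of Definition \ref{Definition:Of:The:Set:A}, $du_{3-j(i)}([t_{i-1},t_i])=0$, i.e., $u_{3-j(i)}$ is constant on $[t_{i-1},t_i]$. Consequently $y(t_i)-y(t_{i-1})=v_{j(i)}(u_{j(i)}(t_i)-u_{j(i)}(t_{i-1}))$, and
\begin{equation*}
\sum_{i=1}^n\|y(t_i)-y(t_{i-1})\|=\|v_1\|\!\!\sum_{i:j(i)=1}\!\!(u_1(t_i)-u_1(t_{i-1}))+\|v_2\|\!\!\sum_{i:j(i)=2}\!\!(u_2(t_i)-u_2(t_{i-1})).
\end{equation*}
The terms with $j(i)=2$ already give zero contribution to $u_1$ (since $u_1$ is constant there), so the first sum on the right equals $u_1(t')-u_1(\varepsilon)$, and similarly the second equals $u_2(t')-u_2(\varepsilon)$. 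Hence $V_1(y,[\varepsilon,t'])\ge \|v_1\|(u_1(t')-u_1(\varepsilon))+\|v_2\|(u_2(t')-u_2(\varepsilon))$, and combined with the upper bound this is an equality.

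Finally, since $u$ is continuous and non-decreasing, letting $\varepsilon\downarrow 0$ and $t'\uparrow t$ gives, via monotonicity of $V_1(y,\cdot)$ in the interval,
\begin{equation*}
V_1(y,[0,t])\ge \|v_1\|(u_1(t)-u_1(0))+\|v_2\|(u_2(t)-u_2(0)),
\end{equation*}
which together with the upper bound proves \eqref{display:variation:of:y:over:excursion}. The main technical point is the Lebesgue-number argument, which requires the hypothesis $z(s)\ne 0$ on $(0,T)$ in an essential way: this is what keeps $\partial S_1$ and $\partial S_2$ disjoint on the interior of the interval, so that $U_1\cup U_2$ actually covers $(0,T)$. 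The passage $\varepsilon\downarrow 0$, $t'\uparrow t$ is only a continuity argument handling the potentially singular endpoints at which $z$ may touch the origin.
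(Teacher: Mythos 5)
Your proof is correct and follows essentially the same route as the paper: the triangle inequality gives the upper bound, a sufficiently fine partition of a compact subinterval $[\varepsilon,t']\subset(0,T)$ (on which each subinterval meets at most one of $\partial S_1,\partial S_2$, so the complementary component of $u$ is frozen) gives the matching lower bound, and a limiting argument handles the endpoints where $z$ may vanish. The only cosmetic differences are that you make the compactness step explicit via the Lebesgue number lemma and pass to the limit using continuity of $u$, where the paper instead cites property P6 of Chistyakov--Galkin together with continuity of $y$.
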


\begin{proof} The main idea of this proof is that on a closed interval included in $(0,T)$ the process $Z$ can hit the boundaries $\partial S_1$ and $\partial S_2$ only finitely many times. This may not be true on the interval $[0,T]$, because $z(0)$ or $z(T)$ may possibly be equal to zero. In this spirit let $0 < r < t < T$. We first show that
\begin{eqnarray}
V_1(y,[r,t]) &=& \| v_1 \|(u_1(t)-u_1(r)) + \|v_2\|(u_2(t)-u_2(r)). \label{display:to:use:triangle:inequality}
\end{eqnarray}
A straightforward application of the triangle inequality guarantees that the left-hand side of \eqref{display:to:use:triangle:inequality} is dominated by the right-hand side. From the continuity of $z$ and by the assumption $z(s) \neq 0$ for $s \in [r,t]$, it follows that there exits a finite partition $\pi = \{r=t_0 < t_1 < ... < t_n=t\}$ of $[r,t]$ such that for each $i=0,...,n-1$, within the interval $[t_i,t_{i+1}]$, $z$ does not touch both boundaries $\partial S_1$ and $\partial S_2$. We can assume without loss of generality that $z$ does not reach $\partial S_1$ in $[t_i,t_{i+1}]$ if $i$ is odd, and
does not reach $\partial S_2$ in $[t_i,t_{i+1}]$ if $i$ is even. Then
\begin{eqnarray*}
\sum_{i=0}^{n-1}\|y(t_{i+1})-y(t_i)\| &=&\sum_{i=0}^{n-1} \| v_1 u_1(t_{i+1})-v_1u_1(t_i) + v_2u_2(t_{i+1})-v_2u_2(t_i)\| \\
&=&\sum_{i~ \textrm{even}} \| v_1 u_1(t_{i+1})-v_1u_1(t_i)\| +\sum_{i ~ \textrm{odd}} \|  v_2 u_2(t_{i+1})-v_2u_2(t_i)\| \\
&=&\|v_1\|(u_1(t)-u_1(r))+\|v_2\|(u_2(t)-u_2(r)),
\end{eqnarray*}
which indeed proves \eqref{display:to:use:triangle:inequality}.

Now, from statement P6 in Section 2.2 of Chistyakov and Galkin \cite{chistyakov1998maps}, it follows that
\begin{eqnarray*}
V_1(y,(0,t]) &=& \| v_1 \|(u_1(t)-u_1(0)) + \|v_2\|(u_2(t)-u_2(0)), \end{eqnarray*}
for $0 < t < T$, and
\begin{eqnarray*}
V_1(y,(0,T)) &=& \| v_1 \|(u_1(T)-u_1(0)) + \|v_2\|(u_2(T)-u_2(0)). \end{eqnarray*}
However, based on the continuity of $y$ one can easily show that
$V_1(y,[0,t])=V_1(y,(0,t])$ and $V_1(y,[0,T])=V_1(y,(0,T))$, and hence \eqref{display:variation:of:y:over:excursion} follows.
\end{proof}

We now present the proof of Part \ref{Proposition:Skorohod:Problem:Part:2} of  Theorem \ref{Proposition:Skorokhod:Problem}.

\begin{proof}[Proof of Part \ref{Proposition:Skorohod:Problem:Part:2} of Theorem \ref{Proposition:Skorokhod:Problem}:]
Let $[G_i,D_i]$ be an arbitrary excursion interval of $Z(\cdot, \omega)$
away from zero. By Proposition \ref{main} there exist two non-decreasing functions $U_1,U_2: \mathbb{R}_+ \mapsto \mathbb{R}$, such that $Y((G_i+t) \wedge D_i)=v_1 U_1(t) + v_2 U_2(t)$ for $t \geq 0$, and
\begin{eqnarray*}
\int_0^{\infty}1\{Z((G_i+p)\wedge D_i) \notin \partial S_j\}dU_j(p) &=& 0,
\end{eqnarray*}
for $j=1,2$. Then, by the previous lemma,
\begin{eqnarray}
V_1(Y((G_i+\cdot) \wedge D_i),[0,t])&=& \| v_1 \|(U_1(t)-U_1(0)) + \|v_2\|(U_2(t)-U_2(0)),~~t \geq 0. \label{display:to:be:used:with:item:4}
\end{eqnarray}
The first three requirements of Definition \ref{Definition:Skorohod:Problem} are obviously satisfied. Item \ref{Definition:SP:Item:Four}
of Definition \ref{Definition:Skorohod:Problem} follows from \eqref{display:to:be:used:with:item:4}. Moreover, from \eqref{display:to:be:used:with:item:4} it follows that
\begin{eqnarray*}
Y((G_i+t) \wedge D_i) -Y(G_i)&=&\int_0^t \gamma(p)d( \| v_1 \|U_1(p) +\|v_2\|U_2(p)),
\end{eqnarray*}
where
\begin{eqnarray*}
\gamma(p) &=& \frac{v_1}{\|v_1\|}1\{Z((G_i+p)\wedge D_i) \in \partial S_1\}
+ \frac{v_2}{\|v_2\|}1\{Z((G_i+p)\wedge D_i) \in \partial S_2\},
\end{eqnarray*}
and so Item \ref{Definition:SP:Item:Five} of Definition \ref{Definition:Skorohod:Problem} follows. Clearly the same proof applies if instead of $[G_i,D_i]$ we consider the interval $[0,\tau_0]$, whenever $\tau_0>0$; the only difference is that we have to use Proposition \ref{p3} instead of Proposition \ref{main}.
\end{proof}

\section{Proof of Theorem \ref{Theorem:Main:Dirichlet:Process:Result}}
\label{section:dirichlet:process}

Let the process $X$ be as in Theorem \ref{Proposition:Skorokhod:Problem}, so
 for each $z \in S$ the process $X$ is a Brownian motion started from $z$ on $(C_S,\mathcal{F},\mathcal{F}_t,P^z)$, and let $Y$ be defined by $Z=X+Y$. In particular, $X$ is a local martingale on $(C_S,\mathcal{F},\mathcal{F}_t,P^z)$. Hence, by Definition \ref{Definition:Dirichlet:Process} of Section \ref{Section:Introduction}, in order to prove Theorem \ref{Theorem:Main:Dirichlet:Process:Result}
it suffices to prove that for each $z \in S$, $Y$ is a continuous, zero-energy process (see Definition \ref{Definition:Zero:Energy}) on $(C_S,\mathcal{F},\mathcal{F}_t,P^z)$ with $P^z(Y(0)=0)=1$. The
fact that $Y$ is continuous with $P^z(Y(0)=0)=1$ is immediate since $Y=Z-X$,
and so the proof of Theorem \ref{Theorem:Main:Dirichlet:Process:Result} is reduced to proving that $Y$ is of zero-energy on $(C_S,\mathcal{F},\mathcal{F}_t,P^z)$.

In order to prove that $Y$ is of zero-energy on $(C_S,\mathcal{F},\mathcal{F}_t,P^z)$ for each $z \in S$, we will rely on the relationship between functions of finite $p$-variation and $1/p$-H\"{o}lder continuous functions. In particular, let $E \subseteq \mathbb{R}_{+}$ and $f:E \mapsto \mathbb{R}^d$. Then, for each $\gamma > 0$ the function $f$ is said to be H\"{o}lder continuous
with exponent $\gamma$ and H\"{o}lder constant $H(f) \in \mathbb{R}_{+}$, if
\begin{eqnarray*}
\| f(t)-f(s)\| &\leq H(f) |t-s |^{\gamma}&~\textrm{for}~s,t \in E.
\end{eqnarray*}
In addition, we say that $f$ is locally H\"{o}lder continuous on $\mathbb{R}_{+}$ if it is H\"{o}lder continuous on compact $E \subset \mathbb{R}_{+}$. The following result now relates functions of finite $p$-variation to H\"{o}lder continuous functions with exponent $1/p$, see for instance Theorem 3.1 of Chistyakov and Galkin \cite{chistyakov1998maps} where it is proven for $p > 1$. The fact that it is true for $p > 0$ may be verified by mimicking their proofs.

\begin{proposition}\label{Proposition:Finite:Variation:Holder}
Let $E \subseteq \mathbb{R}_{+}$ and $f:E \mapsto \mathbb{R}^d$. Then for each $p > 0$, we have that $V_p(f,E) < \infty$
if and only if there exists a bounded, nondecreasing function $\varphi : E \mapsto \mathbb{R}$ and a map $g:\varphi(E) \mapsto \mathbb{R}^d$
which is H\"{o}lder continuous with exponent $1/p$ and H\"{o}lder constant $H(g) \leq 1$, such that $f = g \circ \varphi$ on $E$.
\end{proposition}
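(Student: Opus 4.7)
The plan is to adapt the classical argument from Chistyakov and Galkin verbatim, observing that the key structural property---superadditivity of $V_p$ with respect to concatenation of intervals---holds for every $p>0$, not only for $p\ge 1$.

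First I would prove the easy direction. Suppose $f=g\circ \varphi$ on $E$ with $\varphi$ nondecreasing and bounded and with $g$ being $1/p$-H\"older with constant at most $1$ on $\varphi(E)$. For any partition $\pi=\{t_0\le\cdots\le t_n\}\subset E$,
\begin{equation*}
\sum_{i=1}^n \|f(t_i)-f(t_{i-1})\|^p \;=\; \sum_{i=1}^n \|g(\varphi(t_i))-g(\varphi(t_{i-1}))\|^p \;\le\; \sum_{i=1}^n |\varphi(t_i)-\varphi(t_{i-1})| \;=\; \varphi(t_n)-\varphi(t_0),
\end{equation*}
where the last equality uses monotonicity of $\varphi$. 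Taking the supremum over $\pi$ and using boundedness of $\varphi$ gives $V_p(f,E)<\infty$.

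For the harder direction, assume $V_p(f,E)<\infty$ and define
\begin{equation*}
\varphi(t) \;=\; V_p\bigl(f, E\cap(-\infty,t]\bigr),\qquad t\in E.
\end{equation*}
Clearly $\varphi$ is nondecreasing and bounded above by $V_p(f,E)$. The crucial step is to show the superadditivity
\begin{equation*}
V_p\bigl(f, E\cap[s,t]\bigr) \;\le\; \varphi(t)-\varphi(s),\qquad s\le t,\ s,t\in E,
\end{equation*}
which for any $p>0$ follows by concatenating an arbitrary partition of $E\cap(-\infty,s]$ with an arbitrary partition of $E\cap[s,t]$ and noting that the two resulting $p$-variation sums add on a common partition of $E\cap(-\infty,t]$. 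In particular, $\|f(t)-f(s)\|^p\le V_p(f,E\cap[s,t])\le \varphi(t)-\varphi(s)$, so whenever $\varphi(s)=\varphi(t)$ we automatically have $f(s)=f(t)$. This means the prescription $g(\varphi(t)):=f(t)$ unambiguously defines a map $g:\varphi(E)\mapsto \mathbb{R}^d$, and the inequality
\begin{equation*}
\|g(\varphi(t))-g(\varphi(s))\| \;\le\; \bigl(\varphi(t)-\varphi(s)\bigr)^{1/p}
\end{equation*}
gives exactly the $1/p$-H\"older continuity with constant $H(g)\le 1$, and $f=g\circ\varphi$ by construction.

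The only place where the restriction $p>1$ enters the Chistyakov--Galkin argument is in auxiliary estimates comparing $V_p$ to Minkowski-type seminorms, and those are not used here. The step that requires a little care, and that I would expect to be the main obstacle, is simply verifying the superadditivity of $V_p$ for $p\in(0,1]$: the sup-definition must be used directly, because for $p<1$ refining a partition only increases $p$-variation sums (the opposite of the $p>1$ regime), so one cannot argue via refinements as is customary. Fortunately the proof via concatenation of partitions sketched above avoids this issue and applies uniformly in $p>0$, which is exactly the observation the authors allude to when they remark that mimicking the $p>1$ proof suffices.
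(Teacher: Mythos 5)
Your proof is correct and takes essentially the same approach as the paper: the paper defers to Theorem~3.1 of Chistyakov and Galkin and then, in the ensuing discussion, records exactly the choice $\varphi(t)=V_p(f,E_t^-)$ with $g$ the induced map on $\varphi(E)$, which is what you construct. Your explicit verification that the concatenation argument for the superadditivity inequality $V_p(f,E\cap[s,t])\le\varphi(t)-\varphi(s)$ works uniformly for all $p>0$ is precisely what makes the paper's terse remark ``may be verified by mimicking their proofs'' honest, so the proposal is a sound filling-in rather than a different route.
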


\noindent In the above, the quantity $\varphi(E) \subseteq \mathbb{R}$ is the range of the function $\varphi : E \mapsto \mathbb{R}$.  The requirement that the H\"{o}lder constant $H(g) \leq 1$ is unnecessary in the if portion of Proposition \ref{Proposition:Finite:Variation:Holder}. Indeed, $V_p(f,E) < \infty$ if $f = g \circ \varphi$ on $E$ for a bounded, nondecreasing function $\varphi : E \mapsto \mathbb{R}$ and any $g:\varphi(E) \mapsto \mathbb{R}^d$
which is H\"{o}lder continuous with exponent $1/p$, regardless of its constant. We also note that given a $f$ with $V_p(f,E) < \infty$, one particular choice of $\varphi$ and $g$ satisfying the properties of Proposition \ref{Proposition:Finite:Variation:Holder} is as follows (see the proof of Theorem 3.1 of \cite{chistyakov1998maps} for details). For each $t \in E$, let $E_t^{-}=\{s \in E : s \leq t\}$. Also, for future reference, set $E_t^{+}=\{s \in E : t \leq s\}$, and for $a,b \in E$ with $a \leq b$, let $E^b_a=E^{+}_a \cap E^{-}_b$. Now set $\varphi(t)=V_p(f,E_t^{-})$ for $t \in E$. Next, for $\tau \in \varphi(E)$, define the multi-valued inverse function $\varphi^{-1}(\tau)=\{t \in E : \varphi(t)=\tau\}$. We may then define $g: \varphi(E) \mapsto \mathbb{R}$ by $g(\tau)=f(t)$ for any point $t \in \varphi^{-1}(\tau)$. The fact that $g$ is well-defined is proven in \cite{chistyakov1998maps}. Finally, we note that
in the event that $E=[0,T]$ for some $T > 0$ and the function $f$ is continuous, then we may take the function $\varphi$ to be continuous as well (see Proposition 5.14 of Friz and Victoir \cite{friz2010multidimensional}).

\begin{proposition}\label{Proposition:Variation:Time:Chanaged:Holder}
Suppose that $\alpha < p < 2$. Then, for $P^0$-a.e. $\omega \in C_S$, there exists a continuous, nondecreasing function $\varphi_p : \mathbb{R}_{+} \mapsto \mathbb{R}$ and a map $g_p:\varphi_p(E) \mapsto \mathbb{R}^2$, which is locally H\"{o}lder continuous with exponent $1/p$, such that $Y(\cdot,\omega) = g_p \circ \varphi_p$ on $\mathbb{R}_{+}$.
\end{proposition}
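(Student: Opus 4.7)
The plan is to reduce the proposition to the assertion that $V_p(Y,[0,N])<\infty$ $P^0$-a.s.\ for each integer $N\ge 1$ and each $p\in(\alpha,2)$. Granted this, I would set $\varphi_p(t):=V_p(Y,[0,t])$; by continuity of $Y$ and Proposition 5.14 of Friz and Victoir \cite{friz2010multidimensional}, $\varphi_p$ is continuous and nondecreasing on $\mathbb{R}_+$. The construction described in the text after Proposition \ref{Proposition:Finite:Variation:Holder}, applied on each compact $[0,N]$, then yields a well-defined $g_p:\varphi_p(\mathbb{R}_+)\to\mathbb{R}^2$ with $g_p\circ\varphi_p=Y$ and with $g_p$ H\"older of exponent $1/p$ and constant at most $1$ on each $\varphi_p([0,N])$, hence locally $(1/p)$-H\"older. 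The local pieces are automatically consistent because they share the same $\varphi_p$.

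To establish the $p$-variation bound on $[0,N]$, fix a partition $\pi=\{0=t_0<\cdots<t_n=N\}$ and classify each $[t_{k-1},t_k]$ as of type (I) if $[t_{k-1},t_k]\subset[G_i,D_i]$ for some excursion interval, and of type (II) if $[t_{k-1},t_k]\cap\Lambda\neq\emptyset$. By Part \ref{Proposition:Skorohod:Problem:Part:2} of Theorem \ref{Proposition:Skorokhod:Problem}, $Y$ is of bounded variation on $[G_i,D_i]$ with $K_i:=V_1(Y,[G_i,D_i])$, so the type-(I) increments sitting inside a common $[G_i,D_i]$ have total $p$-contribution at most $K_i^p$ (using $\sum|a_j|^p\le(\sum|a_j|)^p$ for $p\ge 1$). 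For a type-(II) interval, setting $s_1^k:=\inf([t_{k-1},t_k]\cap\Lambda)$ and $s_2^k:=\sup([t_{k-1},t_k]\cap\Lambda)$ and using $Y=-X$ on $\Lambda$, one obtains
\begin{equation*}
|Y(t_k)-Y(t_{k-1})|^p\le 3^{p-1}\bigl(|Y(t_{k-1})-Y(s_1^k)|^p+|X(s_1^k)-X(s_2^k)|^p+|Y(s_2^k)-Y(t_k)|^p\bigr),
\end{equation*}
with the first and third terms bounded by $K_j^p$ for the (at most two) adjacent excursions. Summing gives
\begin{equation*}
V_p(Y,[0,N])\le C_p\sum_{i:G_i\le N}K_i^p+C_p\,V_p^\Lambda(X,[0,N]),
\end{equation*}
where $V_p^\Lambda(X,[0,N])$ denotes the supremum of $\sum_k|X(u_k)-X(u_{k-1})|^p$ over partitions of $[0,N]$ whose points lie in $\Lambda$. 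To handle the excursion sum I would combine the reflected-diffusion estimate $K_i\le c_0\sup_{[G_i,D_i]}\|Z\|$ (valid because $(Z,Y)$ solves the SP on $[G_i,D_i]$ with non-degenerate reflection directions), Brownian scaling (giving oscillation of order $(D_i-G_i)^{1/2}$ on each excursion), and a Campbell-type computation against the L\'evy measure $c\,\ell^{-1-\alpha/2}d\ell$ of the $\alpha/2$-stable subordinator $L^{-1}$ (Corollary 2.7 of Williams \cite{williams1987local}), yielding $E^0\bigl[\sum_{i:G_i\le N}K_i^p\bigr]<\infty$ exactly when $p>\alpha$.

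The main obstacle is the zero-set term $V_p^\Lambda(X,[0,N])$, since $X$ and $L$ are not independent. The plan is to parametrize $\Lambda$ by local time: by \eqref{Display:Disjoint:Intervals}, $\Lambda\setminus\{G_i\}_i=L^{-1}(\mathbb{R}_+)$, and the countable set $\{G_i\}$ does not affect the $p$-variation by continuity of $X$. Any partition of $[0,N]$ inside $\Lambda$ then has the form $u_k=L^{-1}(a_k)$, and I would identify the time-changed process $\hat X(a):=X(L^{-1}(a))$, using the excursion theory of Williams \cite{williams1987local} and the strong Markov property, as a symmetric $\alpha$-stable L\'evy process on the local-time scale. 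Classical $p$-variation results for stable L\'evy processes then give $V_p^\Lambda(X,[0,N])=V_p(\hat X,[0,L(N)])<\infty$ $P^0$-a.s.\ for $p>\alpha$, which combined with the excursion sum yields the desired finite-$p$-variation bound for $Y$.
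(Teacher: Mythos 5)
Your route is genuinely different from the paper's. The paper never proves $V_p(Y,[0,T])<\infty$ directly; instead it builds a bespoke time change $\varphi_{p,q}$ that equals $V_q(L^{-1},[0,L(\cdot)])$ on the zero set and a normalized $p$-variation of $Y$ on each excursion, and then verifies by hand that $Y\circ\varphi_{p,q}^{-1}$ is $(1/p)$-H\"older (splitting into the cases $\tau,\tau'\in\varphi_{p,q}(\Lambda)$, one in and one out, etc.); the finite $p$-variation statement of Theorem \ref{Theorem:Converse:Dirichlet:Process:Result} is then a corollary via Proposition \ref{Proposition:Finite:Variation:Holder}. You invert this logic: prove $V_p(Y,[0,N])<\infty$ first by a partition-splitting argument and then take $\varphi_p=V_p(Y,[0,\cdot])$. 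This is a legitimate and arguably cleaner organization, and your two basic inputs are the right ones: the excursion contributions are controlled by $\sum_i V_1(Y,[G_i,D_i])^p$, and the zero-set contributions reduce, via the change-of-variable property (P4) of \cite{chistyakov1998maps}, to the $p$-variation of the time-changed process $X\circ L^{-1}$, which is finite for $p>\alpha$ by Proposition \ref{Lemma:Y:Composed:With:L:Inverse:Is:Levy} together with \cite{bretagnolle1972p} (this is exactly the mechanism the paper uses, in the opposite direction, to prove the divergence half of Theorem \ref{Theorem:Converse:Dirichlet:Process:Result}).

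Two steps need repair. First, the estimate $K_i\le c_0\sup_{[G_i,D_i]}\|Z\|$ is not justified: a bound on the total variation of the pushing term by the oscillation of the reflected path is a Lipschitz-type property of the Skorokhod map that is not available for this wedge in the regime $1<\alpha<2$, and ``Brownian scaling'' only gives $\sup\|Z\|\sim(D_i-G_i)^{1/2}$ in distribution, not an almost surely summable bound over all excursions. The correct argument — the one the paper uses in the last part of its proof of Proposition \ref{Proposition:Variation:Time:Chanaged:Holder} — is to note that $R^{-1}Y$ has nondecreasing components on $[G_i,D_i]$ (Part \ref{Proposition:Skorohod:Problem:Part:2} of Theorem \ref{Proposition:Skorokhod:Problem}), so $K_i$ is comparable to $\|Y(D_i)-Y(G_i)\|=\|X(D_i)-X(G_i)\|$ (since $Z(G_i)=Z(D_i)=0$), and then to use the a.s.\ local $\eta$-H\"older continuity of $X$ for $\eta<1/2$ together with $\sum_{i:L(G_i)\le a}(D_i-G_i)^{\eta p}<\infty$ for $\eta p>\alpha/2$ (finite $q$-variation of the $\alpha/2$-stable subordinator, \cite{simon2004small}); choosing $\eta$ close enough to $1/2$ covers all $p>\alpha$. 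Your Campbell computation as stated also does not give a finite expectation (the large-jump part of $\int\ell^{p/2}\ell^{-1-\alpha/2}d\ell$ diverges); the a.s.\ statement is what you need and is what holds. Second, the identification of $X\circ L^{-1}$ as an $\alpha$-stable L\'evy process is far from a routine consequence of ``excursion theory and the strong Markov property'': the self-similarity requires the joint scaling of $(X,L)$, which in this paper rests on the uniqueness of the decomposition of $\phi(Z)$ from \cite{williams1987local} and on the whole $F_\gamma$ machinery of Section 3. It is true and is proved as Proposition \ref{Lemma:Y:Composed:With:L:Inverse:Is:Levy} without circular reliance on the present proposition, so you may cite it, but you should not present it as a light step. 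With these two repairs your argument goes through.
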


In the following two subsections, we provide the proof of Proposition \ref{Proposition:Variation:Time:Chanaged:Holder}. We close this subsection with the proof of Theorem \ref{Theorem:Main:Dirichlet:Process:Result} which follows as a result of Propositions \ref{Proposition:Finite:Variation:Holder} and \ref{Proposition:Variation:Time:Chanaged:Holder}, together with the results of Section \ref{Section:Identification:of:X:and:Skorokhod:Problem}.

\begin{proof}[Proof of Theorem \ref{Theorem:Main:Dirichlet:Process:Result}]We first treat the case of $z=0$. By the decomposition $Z=X+Y$, where $X$ is a Brownian motion started from $0$ on $(C_S,\mathcal{F},\mathcal{F}_t,P^0)$, in order to show that $Z$ is a Dirichlet process on $(C_S,\mathcal{F},\mathcal{F}_t,P^0)$, it suffices by Definition \ref{Definition:Dirichlet:Process} to prove that $Y$ is a process of zero energy on $(C_S,\mathcal{F},\mathcal{F}_t,P^0)$. Let $\alpha < p < 2$ and recall by Proposition \ref{Proposition:Variation:Time:Chanaged:Holder} that for $P^0$-a.e. $\omega \in C_S$, $Y(\cdot,\omega) = g_p \circ \varphi_p$ on $\mathbb{R}_{+}$, where $\varphi_p : \mathbb{R}_{+} \mapsto \mathbb{R}$ is a  continuous, nondecreasing function  and $g_p:\varphi_p(E) \mapsto \mathbb{R}^2$ is locally H\"{o}lder continuous on $\mathbb{R}_{+}$ with exponent $1/p$.

Now let $T \geq 0$ and let $H_{\varphi_p(T)}(g_p)$
denote the H\"{o}lder constant of $g_p$ on $[0,\varphi_p(T)]$. Then, for any partition $\pi$ of $[0,T]$,
\begin{eqnarray*}
\sum_{t_i \in \pi} \|Y(t_i)-Y(t_{i-1})  \|^2 &=&\sum_{t_i \in \pi} \|g_p \circ \varphi_p(t_i)-g_p \circ \varphi_p(t_{i-1})  \|^2 \\
&\leq& (H_{\varphi_p(T)}(g_p))^2\sum_{t_i \in \pi} |\varphi_p(t_i)-\varphi_p(t_{i-1})  |^{2/p} \\
&\leq&(H_{\varphi_p(T)}(g_p))^2(\varphi_p(T)-\varphi_p(0)) \max_{t_i \in \pi} |\varphi_p(t_i)-\varphi_p(t_{i-1})  |^{2/p-1}.
\end{eqnarray*}
Next let $\{\pi^n, n \geq 1\}$ be a sequence of partitions of $[0,T]$ with $\|\pi^n \| \rightarrow 0$ as $n \rightarrow \infty$, and note that since $ \alpha < p < 2$, $2/p-1 > 0$. Then, since $\varphi_p$
is $P^0$-a.s. continuous, it follows that $\max_{t_i \in \pi^n} |\varphi_p(t_i)-\varphi_p(t_{i-1})  |^{2/p-1} \rightarrow 0$ as $n \rightarrow \infty,~P^0$-a.s., and so by the above
\begin{eqnarray*}
\sum_{t_i \in \pi} \|Y(t_i)-Y(t_{i-1})  \|^2 &\rightarrow&0~~\textrm{as}~~n \rightarrow \infty,~P^0\textrm{-a.s.},
\end{eqnarray*}
which implies that $Y$ is of zero-energy on $(C_S,\mathcal{F},\mathcal{F}_t,P^0)$. Thus, $Z$ is a Dirichlet process on $(C_S,\mathcal{F},\mathcal{F}_t,P^0)$.

Now let $z \in S$ arbitrary. Again by the decomposition $Z=X+Y$, where $X$ is a Brownian motion started from $z$ on $(C_S,\mathcal{F},\mathcal{F}_t,P^z)$ \ it  suffices by Definition \ref{Definition:Dirichlet:Process} to prove that $Y$ is a process of zero energy on $(C_S,\mathcal{F},\mathcal{F}_t,P^z)$. Using Proposition \ref{Proposition:Finite:Variation:Holder} above and similar arguments to the case of $z=0$ in the preceding paragraphs, it suffices to show that for $1 \leq \alpha < p < 2$,
\begin{eqnarray}
P^z(V_p(Y,[0,T]) < \infty)&=&1,~T \geq 0. \label{display:used:before:in:proof}
\end{eqnarray}

Recall from \eqref{tau:zero} of Section \ref{Section:Identification:of:X:and:Skorokhod:Problem} the definition of the stopping time $\tau_0=\inf\{t > 0 :Z(t)=0\}$. By Theorem 2.2 of Varadhan amd Williams \cite{varadhan1985brownian}, since $ \alpha > 0$,  $P^z(\tau_0 < \infty)=1$.
By the monotonicity property (see (P2) in Chistyakov and Galkin \cite{chistyakov1998maps}),
$$P^z\left(V_p\left(Y,[0,T]\right)<\infty\right)\ge P^z\left(V_p\left(Y,[0,\tau_0+T]\right)<\infty\right).$$
By the  semi-additivity property (see  (P3) in Chistyakov and Galkin \cite{chistyakov1998maps})
the right-hand side in the above equation is equal to
$$P^z\left(V_p\left(Y,[0,\tau_0]\right)<\infty \ \hbox{and}\ V_p\left(Y,[\tau_0,\tau_0+T]\right)<\infty\right).$$
Now using the strong Markov property \cite{varadhan1985brownian}, this expression can be written as
\begin{eqnarray}
&&E^z\left[1\{V_p\left(Y,[0,\tau_0]\right)<\infty\}P^z(V_p\left(Y,[\tau_0,\tau_0+T]\right)<\infty|{\cal F}_{\tau_0})\right]  \label{use:markov}\\
&=&P^z\left(V_p\left(Y,[0,\tau_0]\right)<\infty\right)P^0\left(V_p\left(Y,[0,T]\right)<\infty\right).
 \nonumber
\end{eqnarray}
However, it has already been shown earlier in the proof that the second factor in \eqref{use:markov} is equal to 1. By Part \ref{Proposition:Skorohod:Problem:Part:2} of Theorem \ref{Proposition:Skorokhod:Problem}, $P^z\left(V_1\left(Y,[0,\tau_0]\right)<\infty\right)=1$, and this implies that the first term in \eqref{use:markov} is also equal to 1.
\end{proof}

\subsection{Properties of the Function $\mathbf{\varphi_{p,q}}$}

For each $p > \alpha$ and $q > \alpha/2$, define the function $\varphi_{p,q} : \mathbb{R}_{+} \mapsto \mathbb{R}_{+}$ by setting
\begin{eqnarray}
\varphi_{p,q}(t) &=&\begin{cases}
V_{q}(L^{-1}, [0,L(t)] ), &\text{for $t \in  L^{-1}(\mathbb{R}_{+})$,}\\
V_{q}(L^{-1}, [0,L(t)) ), &\text{for $t \in  \Lambda \backslash L^{-1}(\mathbb{R}_{+})$,} \\
\varphi_{p,q}(G_i)+ \frac{(D_i-G_i)^q}{V_p(Y ,[G_i,D_i])} \cdot V_p(Y  ,[G_i,t]) , &\text{for $t \in  (G_i,D_i)$ for $i \geq 1$,} \\
\end{cases} \label{Display:Definition:Of:Varphi}
\end{eqnarray}
where we recall from Section \ref{Section:Excursion:Theory:Background} the definition of $\Lambda$ as the zero set of $Z$, and of $L$ as the local time of $Z$ at the origin. Our main result of this subsection is the following.

\begin{proposition}\label{Proposition:Main:Properties:Of:Varphi:P:Q}
For each $ p > \alpha $ and $1 > q > \alpha/2$, the function $\varphi_{p,q} : \mathbb{R}_{+} \mapsto \mathbb{R}_{+}$ is $P^0$-a.s. well-defined, non-decreasing and continuous with $\varphi_{p,q}(\mathbb{R}_{+})=\mathbb{R}_{+}$.
\end{proposition}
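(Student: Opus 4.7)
The plan is to verify the four claimed properties---well-definedness, monotonicity, continuity, and surjectivity---by exploiting the decomposition of $\mathbb{R}_{+}$ (valid $P^0$-a.s.) into the three disjoint pieces $L^{-1}(\mathbb{R}_{+})$, $\bigcup_i \{G_i\}$, and $\bigcup_i (G_i,D_i)$, matching the three cases in the definition of $\varphi_{p,q}$ from \eqref{Display:Definition:Of:Varphi}.

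For well-definedness I would first use that $L^{-1}$ is a stable subordinator of index $\alpha/2$ (Corollary 2.7 of Williams \cite{williams1987local}) together with the classical fact that for $q>\alpha/2$ such a subordinator admits the pure-jump representation
\begin{equation*}
V_q(L^{-1},[0,a]) \;=\; \sum_{0<\kappa\le a}(\Delta L^{-1}(\kappa))^q \;<\;\infty\quad P^0\text{-a.s.},
\end{equation*}
which handles the first two cases. For the third case I would appeal to Part \ref{Proposition:Skorohod:Problem:Part:2} of Theorem \ref{Proposition:Skorokhod:Problem}: since $(Z,Y)$ solves the SP on $[G_i,D_i]$, the process $Y$ has finite total variation there, hence $V_p(Y,[G_i,D_i])<\infty$ for $p>1$ (and in particular for $p>\alpha>1$). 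Strict positivity of $V_p(Y,[G_i,D_i])$---needed so that the formula does not read $0/0$---should follow from ruling out that $Y$ is constant on an excursion: if it were, then $Z$ restricted to $[G_i,D_i]$ would be a shifted $2$-dimensional Brownian motion with $X(D_i)=X(G_i)$, and a countable-union argument exploiting that the origin is polar for $2$-dimensional Brownian motion would give $P^0$-probability zero.

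Monotonicity inside each piece is immediate from monotonicity of $L$, of the $q$-variation in its interval, and of $V_p(Y,[G_i,\cdot])$. The key observation that makes both monotonicity across pieces and continuity work is the endpoint-matching identity: $L$ is flat on $[G_i,D_i]$, and by \eqref{Display:Disjoint:Intervals} together with the characterization $\Delta L^{-1}(L(G_i))=D_i-G_i$, we obtain
\begin{equation*}
\varphi_{p,q}(D_i)=V_q(L^{-1},[0,L(G_i)])=V_q(L^{-1},[0,L(G_i)))+(D_i-G_i)^q=\varphi_{p,q}(G_i)+(D_i-G_i)^q.
\end{equation*}
This is exactly the left limit of the third piece as $t\uparrow D_i$, since $V_p(Y,[G_i,t])\uparrow V_p(Y,[G_i,D_i])$ by continuity of $Y$. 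The right limit of the third piece at $G_i$ is $\varphi_{p,q}(G_i)$ since $V_p(Y,[G_i,t])\downarrow 0$. The remaining continuity checks reduce to right-continuity at $L(D_i)$ and an accumulation argument for the left limit at $G_i$ through $\Lambda$: along any sequence $t_n\uparrow G_i$ with $t_n\in\Lambda$, the levels $L(t_n)$ increase continuously to $L(G_i)$, and the pure-jump representation together with dominated convergence gives
\begin{equation*}
V_q(L^{-1},[0,L(t_n)])\text{ or }V_q(L^{-1},[0,L(t_n)))\;\longrightarrow\;V_q(L^{-1},[0,L(G_i)))\;=\;\varphi_{p,q}(G_i),
\end{equation*}
because the jump at level $L(G_i)$ is never included in the approximating sums.

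Finally, surjectivity follows from $\varphi_{p,q}(0)=0$, continuity, monotonicity, and the observation (already noted in the excerpt) that $L(t)\to\infty$ as $t\to\infty$ under $P^0$, combined with the fact that the $q$-variation of the stable subordinator $L^{-1}$ on $[0,a]$ grows without bound as $a\to\infty$. I expect the main obstacle to be the continuity verification at $G_i$, specifically handling the potentially fractal structure of $\Lambda$ near $G_i$ so that one can pass the limit inside the jump sum via dominated convergence; a secondary, more technical, obstacle is the clean proof of $V_p(Y,[G_i,D_i])>0$ simultaneously for every excursion interval.
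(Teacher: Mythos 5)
Most of your proposal tracks the paper's own argument: the finiteness of $V_q(L^{-1},[0,\cdot])$ via the stable-subordinator property and the pure-jump representation (the paper cites Simon for the identity $V_q(L^{-1},[0,a])=\sum_{\kappa\le a}(\Delta L^{-1}(\kappa))^q$ when $\alpha/2<q<1$, which is exactly your endpoint-matching identity and the paper's \eqref{nice:rep}), the continuity at accumulation points of $\Lambda$ via tails of the jump sum, and the surjectivity argument are all essentially what the paper does.

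The genuine gap is your argument for $V_p(Y,[G_i,D_i])>0$. You propose: if $Y$ were constant on $[G_i,D_i]$ then $X(D_i)=X(G_i)$, and this has probability zero ``by polarity of points for planar Brownian motion'' plus a countable union over $i$. This fails for two reasons. First, polarity rules out hitting a \emph{fixed} point, but $X(G_i)$ is a path-dependent point; planar Brownian motion a.s.\ has double points, and indeed a.s.\ there exist random times $s<t$ with $X(u)\neq X(s)$ on $(s,t)$ and $X(t)=X(s)$ (take $s$ to be the last visit to a double point before its second passage), so the event ``the path closes a clean loop'' has probability one, not zero. Second, $G_i$ is a last-exit time, not a stopping time, so you cannot apply the strong Markov property at $G_i$ to argue that $X(G_i+\cdot)-X(G_i)$ is a fresh Brownian motion which never returns to the origin. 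The paper's Lemma \ref{Lemma:Y:Positive:Variation} proves positivity by a completely different route: Williams' excursion-theoretic result (Theorem 3.3 of \cite{williams1987local}) shows that $P^0$-a.s.\ every excursion hits $\partial S\setminus\{0\}$ along a sequence of times decreasing to $G_i$, and then the pathwise fact that \emph{all} points of local minimum from the right of a Brownian path are strict (a statement valid simultaneously at all times, hence usable at the non-stopping time $G_i$) yields a contradiction if $Y$ is flat on $[G_i,(G_i+\varepsilon)\wedge D_i]$. Note also that the paper needs this stronger local statement (positivity on every initial piece of the excursion), not just positivity over the whole interval, for the later Lemma \ref{Lemma:Inverse:Of:Varphi:On:Zero:Set}; for the present proposition the weaker form would suffice, but your proposed proof of even the weaker form does not go through.
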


In preparation for the proof of Proposition \ref{Proposition:Main:Properties:Of:Varphi:P:Q}, we provide three lemmas.

\begin{lemma}\label{Lemma:Properties:Of:Var:Phi:P:Q:On:Lambda}
For each $p > \alpha$ and $q > \alpha/2$, the function $\varphi_{p,q} : \mathbb{R}_{+} \mapsto \mathbb{R}_{+}$ is $P^0$-a.s. well-defined and strictly increasing on the set $\Lambda$. Moreover, if in addition to the above assumptions for $p$ and $q$ it is also true that $q<1$, then $P^0$-a.s.,
\begin{equation} \varphi_{p,q}(t)=\sum_{i:G_i<t}(D_i-G_i)^q,~t \in \Lambda.\label{nice:rep}
\end{equation}
\end{lemma}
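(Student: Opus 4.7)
Under $P^0$, the process $L^{-1}$ is, by Corollary 2.7 of Williams \cite{williams1987local}, a stable subordinator of index $\alpha/2\in(1/2,1)$; in particular, a pure-jump process with no drift and L\'evy measure $\nu(dx)$ proportional to $x^{-1-\alpha/2}dx$. My plan has three steps: establish $P^0$-a.s.\ finiteness of $V_q(L^{-1},[0,a])$ for every $a\ge 0$ and $q>\alpha/2$ (well-definedness); verify strict monotonicity on $\Lambda$ via a case split on local-time levels; and when $q<1$, identify $\varphi_{p,q}$ with the jump sum via the classical identity $V_q(f,[0,a])=\sum_{\kappa\le a}(\Delta f(\kappa))^q$ for pure-jump nondecreasing $f$ with $q\in(0,1]$.

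Well-definedness reduces to a Blumenthal--Getoor-type index computation. For $q\le 1$, the identity above applied to $L^{-1}$ gives $V_q(L^{-1},[0,a])=\sum_{\kappa\le a}(\Delta L^{-1}(\kappa))^q$, and by the L\'evy--It\^o representation this sum is $P^0$-a.s.\ finite precisely when $\int_0^1 x^q\,\nu(dx)<\infty$, i.e.\ when $q>\alpha/2$ (large jumps contribute only finitely many terms on $[0,a]$). For $q>1$ the same finiteness holds via standard moment bounds after compensating the small jumps. Monotonicity of $V_q(L^{-1},[0,a])$ in $a$ then lets a single null set handle all $a\ge 0$.

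For strict monotonicity on $\Lambda$, take $s<t$ with $s,t\in\Lambda$. If $L(s)<L(t)$, then by semi-additivity of $V_q$ on adjacent intervals, $\varphi_{p,q}(t)-\varphi_{p,q}(s)$ dominates $V_q(L^{-1},[L(s),L(t)])$, which is strictly positive because the $\alpha/2$-stable subordinator has jumps in every positive-length time interval. If instead $L(s)=L(t)$, then $L$ is constant on $[s,t]$, which forces $[s,t]\subset[G_i,D_i]$ for the unique $i$ whose excursion interval realizes this plateau of $L$; since $(G_i,D_i)\subset\Lambda^C$, the only possibility is $s=G_i$ and $t=D_i$. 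Here $s\in\Lambda\setminus L^{-1}(\mathbb{R}_+)$ while $t\in L^{-1}(\mathbb{R}_+)$, and the two definitional cases of $\varphi_{p,q}$ combine to give $\varphi_{p,q}(t)-\varphi_{p,q}(s)\ge (\Delta L^{-1}(L(t)))^q=(D_i-G_i)^q>0$.

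For the sum formula when $q<1$, I invoke the fact that for c\`adl\`ag nondecreasing pure-jump $f$ and $q\in(0,1]$, one has $V_q(f,[0,a])=\sum_{\kappa\le a}(\Delta f(\kappa))^q$; this follows from subadditivity of $x\mapsto x^q$, under which merging two adjacent partition intervals can only decrease the partition $q$-sum, so the supremum is attained in the limit of fine partitions and equals the total jump sum. Applied to $L^{-1}$, together with $\Delta L^{-1}(L(G_i))=D_i-G_i$ from the discussion around \eqref{l:inverse}, this yields $\varphi_{p,q}(t)=\sum_{i:\,L(G_i)\in I}(D_i-G_i)^q$, where $I=[0,L(t)]$ or $[0,L(t))$ according to which case of the definition applies at $t$. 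The main remaining obstacle, and the only delicate step, is the bookkeeping showing $\{i:L(G_i)\in I\}=\{i:G_i<t\}$: using \eqref{Display:Disjoint:Intervals} together with the fact that $L$ is strictly increasing on $\Lambda$ off the excursion endpoints, the distinction between the closed and half-open cases in the definition of $\varphi_{p,q}$ precisely compensates for whether the excursion terminating at $t$ (in the case $t=D_i$ for some $i$) should be included in the sum, yielding the claimed formula in both situations.
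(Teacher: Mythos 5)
Your proof is correct, but it takes a genuinely different route from the paper's in each of the three steps, and it is worth flagging where the two differ and what each gains.

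For well-definedness, you re-derive the fact that $V_q(L^{-1},[0,a])<\infty$ a.s.\ for $q>\alpha/2$ via the jump-sum representation and the L\'evy measure, whereas the paper simply cites Simon \cite{simon2004small} (and then scales to get $[0,T]$ from $[0,1]$). Your reduction for $q\le 1$ is fine, but the remark for $q>1$ (``compensating the small jumps'') is misphrased: since $L^{-1}$ is a subordinator it is nondecreasing with no compensation, so $V_1(L^{-1},[0,a])=L^{-1}(a)<\infty$ trivially, and then $V_q\le (V_1)^q$ (Remark 2.5 of \cite{chistyakov1998maps}) finishes the $q\ge 1$ case cleanly; the phrase about compensation is the wrong tool here, even if harmless.

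For the jump-sum identity $V_q(L^{-1},[0,a])=\sum_{\kappa\le a}(\Delta L^{-1}(\kappa))^q$ when $q<1$, you give the standard subadditivity-of-$x^q$ argument; the paper instead invokes display (9) of \cite{simon2004small} together with \eqref{l:inverse}. Same content, different packaging.

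The most substantive divergence is strict monotonicity. The paper proves it by first establishing \eqref{nice:rep} and then noting (via condition \ref{Condition:3:Submartingale} of Definition \ref{vw}) that there is a $G_i\in[t,t')$ between any two points $t<t'$ of $\Lambda$; but \eqref{nice:rep} is only derived under the extra hypothesis $q<1$, so the paper's argument as written only covers $q\in(\alpha/2,1)$, not the full range $q>\alpha/2$ claimed in the lemma. Your case split --- $L(s)<L(t)$ (use semi-additivity and the fact that an $\alpha/2$-stable subordinator jumps in every interval of positive length) versus $L(s)=L(t)$ (force $s=G_i$, $t=D_i$, then use the gap between $V_q(L^{-1},[0,a))$ and $V_q(L^{-1},[0,a])$ at the jump level $a$) --- does not rely on \eqref{nice:rep} and hence covers all $q>\alpha/2$, so your route is actually cleaner on this point and better matches the stated hypothesis. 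One small polish: in the $L(s)=L(t)$ case, the inequality $V_q(L^{-1},[0,a])\ge V_q(L^{-1},[0,a))+(\Delta L^{-1}(a))^q$ should be justified by appending $a$ to a nearly-optimal partition of $[0,a)$ and noting that the final increment is at least $\Delta L^{-1}(a)$; this is valid for every $q>0$, not just $q\le 1$.

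Finally, the bookkeeping showing $\{i:L(G_i)\in I\}=\{i:G_i<t\}$ is correct, but the phrase ``$L$ is strictly increasing on $\Lambda$ off the excursion endpoints'' is imprecise. What you actually need (and what suffices) are the two elementary facts that $L\circ L^{-1}(a)=a$ by continuity of $L$, and that $L(s)\le a$ implies $s\le L^{-1}(a)$, applied together with \eqref{Display:Disjoint:Intervals} to rule out $G_i\ge t$ when $L(G_i)\le L(t)$ and $t\in L^{-1}(\mathbb{R}_+)$, and symmetrically for the half-open case $t=G_j$. Spelling this out would close the one place where your write-up leans on a vague formulation.
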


\begin{proof}Let $p > \alpha$ and $q > \alpha/2$. First note that by \eqref{display:complement:of:lambda}, it follows that $P^0$-a.s. the three cases provided in the definition of $\varphi_{p,q}$ are disjoint. Now recall from Theorem 2.6 of Williams \cite{williams1987local} that $L^{-1}$ is a stable subordinator of index $\alpha/2$ under $P^0$. Hence, by the results of Simon \cite{simon2004small} (see immediately below display (9) of \cite{simon2004small}), it follows that $P^0(V_q(L^{-1},[0,1]) < \infty)=1$. Using the scaling property \cite{williams1987local} under $P^0$,
$$L^{-1}(\lambda \cdot)~ \buildrel d \over =~ \lambda^{2/\alpha}L^{-1}(\cdot),~\lambda > 0, $$
it then follows that $P^0(V_q(L^{-1},[0,T]) < \infty)=1$ for each $T \geq 0$. Moreover, by the monotonicity property of strong $q$-variation (see (P2) of Chistyakov and Galkin \cite{chistyakov1998maps}), $V_q(L^{-1},[0,\cdot])$ is $P^0$-a.s. a non-decreasing function. Hence, $P^0(V_q(L^{-1},[0,T]) < \infty, T \geq 0)=1$.  It now follows from \eqref{Display:Definition:Of:Varphi} that $\varphi_{p,q}$ is $P^0$-a.s. well-defined on the set $\Lambda$.

Next, we show \eqref{nice:rep}. It follows from  \eqref{l:inverse} of Section \ref{Section:Excursion:Theory:Background} and (9) of Simon \cite{simon2004small} that for $\alpha/2<q<1$, $P^0$-a.s.,       \begin{equation}V_q(L^{-1},[0,a])=\sum_{i:L(G_i)\le a}(D_i-G_i)^q
\ \ \hbox{and}
\ \ V_q(L^{-1},[0,a))=\sum_{i:L(G_i)< a}(D_i-G_i)^q,~a \geq 0.\label{open}\end{equation}
\eqref{nice:rep} now follows from \eqref{Display:Definition:Of:Varphi} and \eqref{open}, combined with the identity \eqref{Display:Disjoint:Intervals}.

Finally, we show that $\varphi_{p,q}$ is $P^0$-a.s. strictly increasing on the set $\Lambda$. By Item \ref{Condition:3:Submartingale} of Definition \ref{vw}, it follows that $P^0$-a.s. for every $t,t'\in\Lambda$ such that $t<t'$, there exists some $s\in(t,t')$ such that $s\notin\Lambda$. But then by \eqref{display:complement:of:lambda}, $G_i\in[t,t')$ for some $i \geq 1$, and so \eqref{nice:rep} implies $\varphi_{p,q}(t)<\varphi_{p,q}(t')$.
\end{proof}

\begin{lemma}\label{Lemma:Y:Positive:Variation}
For each $i \geq 1$ and $\varepsilon > 0$,
\begin{eqnarray}
P^0(V_1(Y,[G_i,D_i \wedge (G_i+\varepsilon)]) > 0)&=&1.  \label{Display:Y:Variation:Positive:On:Excursion}
\end{eqnarray}
\end{lemma}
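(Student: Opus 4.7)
The plan is to proceed by contradiction, exploiting the Skorokhod problem (SP) structure from Part 2 of Theorem \ref{Proposition:Skorokhod:Problem}. On the excursion interval $[G_i,D_i]$, the pair $(Z((G_i+\cdot)\wedge D_i), Y((G_i+\cdot)\wedge D_i)-Y(G_i))$ solves the SP for $X((G_i+\cdot)\wedge D_i)+Y(G_i)$. Combining items (3)--(5) of Definition \ref{Definition:Skorohod:Problem} with the fact that $Z(s)\ne 0$ for $s\in(G_i,D_i)$, one may write
$$Y(G_i+t)-Y(G_i)=v_1 U_1(t)+v_2 U_2(t),\qquad t\in[0,D_i-G_i],$$
with each $U_j$ non-decreasing and increasing only when $Z\in\partial S_j\setminus\{0\}$. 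Consequently, the event $\{V_1(Y,[G_i,G_i+\varepsilon\wedge D_i])=0\}$ coincides with the event that $Y$ is constant on $[G_i,G_i+\varepsilon\wedge D_i]$, which in view of $Z=X+Y$ and $Z(G_i)=0$ is equivalent to
$$Z(G_i+t)=X(G_i+t)-X(G_i),\qquad t\in[0,\varepsilon\wedge(D_i-G_i)].$$
Since $Z\in S$, on this event the path $t\mapsto X(G_i+t)-X(G_i)$ would have to remain inside $S$ throughout $[0,\varepsilon\wedge(D_i-G_i)]$.

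I would then derive a contradiction from the fact that a 2-dimensional Brownian motion issued from the apex $0$ of the proper wedge $S$ (opening angle $\xi<2\pi$) cannot remain in $S$ for any positive amount of time: since both boundary half-lines $\partial S_1,\partial S_2$ pass through the origin, the 1-dimensional projection of a 2D Brownian motion onto the inward unit normal of either edge returns to $0$ in every initial time window, so the Brownian motion exits $S$ at times arbitrarily close to $t=0$. If $G_i$ were a stopping time of $\mathcal{F}_t$, this would immediately give the result via the strong Markov property of $X$; the delicacy here is that $G_i$ is not adapted.

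To bypass this, I would invoke It\^o's excursion theory for $Z$ at the origin---applicable because the origin is regular recurrent and $L^{-1}$ is a stable subordinator (cf.\ Section \ref{Section:Excursion:Theory:Background})---and recast the event, which depends only on the $i$-th excursion, in terms of the excursion measure $n$ on $C_S$. It suffices to show that $n(\{V_1(Y_e,[0,\varepsilon\wedge\zeta_e])=0\})=0$ for every $\varepsilon>0$, where $\zeta_e$ denotes the lifetime of the excursion $e$. To establish this, I would approximate $n$ by the laws of $Z(\cdot\wedge\tau_0)$ under $P^z$ for $z\in S^o$ close to $0$ via the Williams embedding from Proposition \ref{williams}: for such $z$, $X(\cdot\wedge\tau_0)$ is an honest Brownian motion and, by the strong Markov property, the argument of the preceding paragraph applies and shows that $P^z(V_1(Y,[0,\varepsilon\wedge\tau_0])=0)\to 0$ as $z\to 0$ in $S^o$ (since a 2D BM from a point of distance $O(|z|)$ to $\partial S$ hits $\partial S\setminus\{0\}$---the point $\{0\}$ being polar---within time $O(|z|^2)$). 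Passing this limit to the entrance law of $n$ then yields $n(\{V_1(Y_e,[0,\varepsilon\wedge\zeta_e])=0\})=0$, and this final transfer is the main technical obstacle I anticipate.
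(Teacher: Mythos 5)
Your reduction is the same as the paper's: on the event $\{V_1(Y,[G_i,D_i\wedge(G_i+\varepsilon)])=0\}$ one gets $Z(G_i+t)=X(G_i+t)-X(G_i)$ on the interval, and you correctly identify that the whole difficulty is that $G_i$ is not a stopping time. But your proposed resolution---recasting the event under the It\^o excursion measure $n$ and approximating $n$ by the laws of $Z(\cdot\wedge\tau_0)$ under $P^z$ as $z\to 0$---is exactly the step you concede you cannot yet carry out, and it is a genuine gap, not a routine verification: it requires identifying the entrance law of $n$, showing that the increment of $Y$ over an excursion is a functional of the excursion path alone (this needs the additivity/stopping identities for $F_\gamma$ from Section 3.2), and then a nontrivial passage to the limit. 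Worse, the heuristic you want to contradict (``a 2D Brownian path cannot sit inside $S$ for a positive time after a given instant'') is \emph{false} at generic random times: since $1<\alpha<2$ forces $\xi<\theta_1+\theta_2<\pi$ but allows $\xi>\pi/2$, the path of $X$ a.s.\ possesses one-sided cone points of angle $\xi$, i.e.\ random times $t$ with $X(t+s)-X(t)$ confined to a congruent wedge for all small $s$. So your argument cannot be rescued by any ``simultaneously for all times'' version of the exit statement; the entire burden really does fall on the unfinished excursion-measure step.

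The paper avoids all of this with two ingredients. First, from Theorem 3.3 of Williams (1987) on the excursion point process, $P^0$-a.s.\ \emph{every} excursion hits $\partial S\setminus\{0\}$ immediately: $\inf\{t>G_i: Z(t)\in\partial S\}=G_i$ for all $i$. Second, it uses a pathwise property of Brownian motion that holds at \emph{all} times simultaneously (hence is applicable at the non-adapted time $G_i$): almost every Brownian path has only \emph{strict} points of local minimum from the right (a one-sided variant of Theorem 2.9.12 in Karatzas--Shreve). If $Y$ were flat on $[G_i,(G_i+\varepsilon)\wedge D_i]$, then along a sequence $t_n\downarrow G_i$ with $Z(t_n)\in\partial S_j$ the projection $\langle n_j, X\rangle$ would have a non-strict local minimum from the right at $G_i$ (it is $\ge$ its value at $G_i$ on the interval because $Z\in S$, and equals it at each $t_n$), a contradiction. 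You should replace your excursion-measure plan with this combination, or else supply the entrance-law analysis in full.
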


\begin{proof}First, we need a slight generalization of Definition 2.9.10 and Theorem 2.9.12 in Karatzas and Shreve \cite{KaratzasShreve}. Let $f:\mathbb{R}_{+} \mapsto \mathbb{R}$ be given. A time $t\ge 0$ will be called a {\it point of local maximum (minimum) from the right} if there exists a $\delta>0$ such that $f(s)\le f(t)$ ($f(s)\ge f(t))$ for every $s\in(t,t+\delta)$. A time $t\ge0$ will be called a {\it point of strict local maximum (minimum) from the right} if there exists a $\delta>0$ such that $f(s)< f(t)$ ($f(s)>f(t)$) for every $s\in(t,t+\delta)$. Now let $w$ be a standard (1-dimensional) Brownian motion on some probability space. Then, following the proof of Theorem 2.9.12 in \cite{KaratzasShreve}, for almost every sample path of $w$ all points of local maximum from the right are strict.
In a similar manner, it is also true that for almost every path $w$ all points of local minimum from the right are strict.

Now we are ready to prove \eqref{Display:Y:Variation:Positive:On:Excursion}. We shall use the same notation as in Williams \cite{williams1987local}. In particular, we adjoin a point $\partial$ to $C_S$, and define the excursion process $\{U_t, t \geq 0\}$ as the $(C_S,{\cal M})$-valued point process given for $t \geq 0$ by
\begin{eqnarray*}
U_t(s)&=&\begin{cases}
Z\left(L^{-1}(t-)+s\right), &\textrm{if}~~ 0\le s <L^{-1}(t)-L^{-1}(t-), \\
0,  &\textrm{if}~~  s\ge L^{-1}(t)-L^{-1}(t-)>0, \\
\partial, &\textrm{if}~~ L^{-1}(t)-L^{-1}(t-)=0.
\end{cases}
\end{eqnarray*}
Now let $N(A,U)$ be the number of times $t$ such that $(t,U_t)\in A$, where $A\in {\cal B}\times{\cal M}$ and ${\cal B}$ is the $\sigma$-field of Borel subsets of $(0,\infty)$. Then, from Theorem 3.3 of Williams \cite{williams1987local} it follows that
\begin{eqnarray*}
E^0\left[N\left(\left(0,t\right)\times\left\{\omega: T_{\partial S}(\omega)>0\right\},U\right)\right]&=&0,
\end{eqnarray*}
where $T_{\partial S}(\omega)=\inf\left\{t>0: \omega(t)\in\partial S\right\}$ for $\omega\in C_S$. Then, by the Monotone Convergence Theorem \cite{lieb2001analysis},
\begin{eqnarray*}
E^0\left[N\left(\left(0,\infty\right)\times\left\{\omega: T_{\partial S}(\omega)>0\right\},U\right)\right]&=&0.
\end{eqnarray*}
This implies that for $P^0$-a.e. sample path $T_{\partial S}^i=G_i$ for all $i\ge 1$, where
\begin{eqnarray*}
T_{\partial S}^i&=&\inf\left\{t>G_i: Z(t)\in\partial S\right\}.
\end{eqnarray*}
Suppose now that \eqref{Display:Y:Variation:Positive:On:Excursion} does not hold. That is, $Y$ is flat on $[G_i,(G_i+\epsilon)\wedge D_i]$ for some $i\ge 1$. Since $Z(G_i)=0$,
\begin{equation} Z(t)=X(t)+Y(G_i)=X(t)-X(G_i),\quad t\in[G_i,(G_i+\epsilon)\wedge D_i].\label{(2)}\end{equation}
Now $T_{\partial S}^i=G_i$ implies the existence of a sequence $t_n\downarrow G_i$ such that $Z(t_n)\in\partial S_1$ for all $n\ge 1$, or $Z(t_n)\in\partial S_2$ for all $n\ge 1$. In the first case, $Z_2(t_n)=0$ for all $n\ge 1$. Thus, by \eqref{(2)}, $G_i$ is a point of local minimum from the right for the Brownian motion $X_2$, but not strict, which is a contradiction. In the second case, we proceed similarly. $G_i$ is a point of local minimum from the right for the Brownian motion $\langle n_2, X\rangle$, but not  strict, which is again a contradiction.
\end{proof}

\begin{lemma}\label{Lemma:Properties:Of:Phi:P:Q:On:Lambda:C}
For each $ p > \alpha $ and $q > \alpha/2$, the function $\varphi_{p,q} : \mathbb{R}_{+} \mapsto \mathbb{R}_{+}$ is $P^0$-a.s. well-defined, non-decreasing and continuous on the set $(G_i,D_i)$ for $i \geq 1$.
\end{lemma}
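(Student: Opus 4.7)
The formula defining $\varphi_{p,q}$ on $(G_i,D_i)$ in \eqref{Display:Definition:Of:Varphi} involves three ingredients: the anchor value $\varphi_{p,q}(G_i)$, which is $P^0$-a.s.\ well-defined by Lemma \ref{Lemma:Properties:Of:Var:Phi:P:Q:On:Lambda}; the normalizing constant $V_p(Y,[G_i,D_i])$; and the running quantity $t \mapsto V_p(Y,[G_i,t])$. My plan is therefore to show $P^0$-a.s.\ that $V_p(Y,[G_i,D_i]) \in (0,\infty)$ and that $t \mapsto V_p(Y,[G_i,t])$ is non-decreasing and continuous on $(G_i,D_i)$. Once these are in place, the right-hand side of \eqref{Display:Definition:Of:Varphi} is an affine function of this running quantity with strictly positive slope, and therefore inherits the desired properties.

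For finiteness, I will invoke Part \ref{Proposition:Skorohod:Problem:Part:2} of Theorem \ref{Proposition:Skorokhod:Problem}, which asserts that $(Z((G_i+\cdot)\wedge D_i),\, Y((G_i+\cdot)\wedge D_i)-Y(G_i))$ solves the standard Skorokhod problem for $X((G_i+\cdot)\wedge D_i)+Y(G_i)$. In particular, $Y$ has finite $1$-variation on $[G_i,D_i]$, $P^0$-a.s. Because $Y$ is continuous on the compact interval $[G_i,D_i]$ and $p > \alpha > 1$, the elementary bound
\begin{equation*}
V_p(Y,[G_i,D_i]) \;\leq\; \left(\sup_{s,t \in [G_i,D_i]}\|Y(t)-Y(s)\|\right)^{p-1} V_1(Y,[G_i,D_i])
\end{equation*}
then gives $V_p(Y,[G_i,D_i]) < \infty$. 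For positivity, Lemma \ref{Lemma:Y:Positive:Variation} guarantees that $V_1(Y,[G_i,(G_i+\varepsilon)\wedge D_i]) > 0$ for every $\varepsilon > 0$, so $Y$ is non-constant on $[G_i,D_i]$; picking $s,t \in [G_i,D_i]$ with $Y(s) \neq Y(t)$, the corresponding two-point partition contributes $\|Y(t)-Y(s)\|^p > 0$, forcing $V_p(Y,[G_i,D_i]) > 0$.

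Monotonicity of $t \mapsto V_p(Y,[G_i,t])$ is property (P2) of \cite{chistyakov1998maps}. Continuity is the $p$-variation analog of the classical fact that the total variation of a continuous function of bounded variation is itself continuous; in the continuous setting with finite $p$-variation, $t \mapsto V_p(Y,[G_i,t])$ is a continuous non-decreasing function on $[G_i,D_i]$, as recorded in \cite{chistyakov1998maps} and Proposition~5.14 of \cite{friz2010multidimensional}. The main technical point I expect to lean on is precisely this continuity statement; by contrast, finiteness is an immediate consequence of the Skorokhod problem identification from Section \ref{Section:Identification:of:X:and:Skorokhod:Problem}, and positivity is essentially packaged in Lemma \ref{Lemma:Y:Positive:Variation}.
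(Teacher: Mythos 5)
Your proposal is correct and follows essentially the same route as the paper: both reduce the claim to showing $0 < V_p(Y,[G_i,D_i]) < \infty$ via Lemma \ref{Lemma:Y:Positive:Variation} and the finite $1$-variation supplied by Part \ref{Proposition:Skorohod:Problem:Part:2} of Theorem \ref{Proposition:Skorokhod:Problem}, and then appeal to monotonicity and continuity of $t \mapsto V_p(Y,[G_i,t])$ for the continuous path $Y$. The only cosmetic difference is the finiteness bound, where you use $V_p \leq (\mathrm{osc})^{p-1}V_1$ while the paper uses $V_p \leq (V_1)^p$ from Remark 2.5 of \cite{chistyakov1998maps}; both are valid since $p > \alpha > 1$.
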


\begin{proof}Let $p > \alpha$ and $q > \alpha/2$. By \eqref{display:complement:of:lambda}, it follows that $P^0$-a.s. the three cases provided in the definition of $\varphi_{p,q}$ are disjoint. We now show that $\varphi_{p,q}$ is $P^0$-a.s. well-defined on $(G_i,D_i)$ for each $i \geq 1$. By \eqref{Display:Definition:Of:Varphi} and the fact that $V_p(Y,[G_i,\cdot])$ is non-decreasing on $(G_i,D_i)$,  it suffices to show that $P^0$-a.s., $0 < V_p(Y,[G_i,D_i]) < +\infty$. The fact that $P^0$-a.s., $0 < V_p(Y,[G_i,D_i])$ follows by Lemma \ref{Lemma:Y:Positive:Variation} above. The fact that $P^0$-a.s., $V_p(Y,[G_i,D_i]) < +\infty$ follows since
by Part \ref{Proposition:Skorohod:Problem:Part:2} of Theorem \ref{Proposition:Skorokhod:Problem}, $V_1(Y,[G_i,D_i]) < +\infty$ $P^0$-a.s., and the fact that $V_p(Y,[G_i,D_i]) \leq (V_1(Y,[G_i,D_i]))^p  $ (see Remark 2.5 of Chistyakov and Galkin \cite{chistyakov1998maps}) since by assumption $p > \alpha > 1$. The fact that $\varphi_{p,q} : \mathbb{R}_{+} \mapsto \mathbb{R}_{+}$ is $P^0$-a.s. continuous and non-decreasing on $(G_i,D_i)$ follows by \eqref{Display:Definition:Of:Varphi} and the facts that $V_p(Y,[G_i,\cdot])$ is non-decreasing on $(G_i,D_i)$ and,
since $Y$ is $P^0$-a.s. continuous, $V_p(Y,[G_i,\cdot])$ is $P^0$-a.s. continuous on $(G_i,D_i)$ as well.
\end{proof}

Combining Lemmas \ref{Lemma:Properties:Of:Var:Phi:P:Q:On:Lambda} and \ref{Lemma:Properties:Of:Phi:P:Q:On:Lambda:C}, we now provide the proof of Proposition \ref{Proposition:Main:Properties:Of:Varphi:P:Q}.

\begin{proof}[Proof of Proposition \ref{Proposition:Main:Properties:Of:Varphi:P:Q}]Let $p > \alpha$ and $1 > q > \alpha/2$.  By \eqref{display:complement:of:lambda}, it follows that $P^0$-a.s. the three cases provided in the definition of $\varphi_{p,q}$ are disjoint and cover all of $\mathbb{R}_{+}$. The fact that $\varphi_{p,q} : \mathbb{R}_{+} \mapsto \mathbb{R}_{+}$ is $P^0$-a.s. well-defined and non-decreasing now follows immediately by Lemmas \ref{Lemma:Properties:Of:Var:Phi:P:Q:On:Lambda} and \ref{Lemma:Properties:Of:Phi:P:Q:On:Lambda:C}.

We next prove that $\varphi_{p,q} : \mathbb{R}_{+} \mapsto \mathbb{R}_{+}$ is $P^0$-a.s. continuous. By Lemma \ref{Lemma:Properties:Of:Phi:P:Q:On:Lambda:C}, it suffices to show that $P^0$-a.s. $\varphi_{p,q}$ is continuous at each point $t \in \Lambda$. Let $\mathcal{H}_1$ be the set of all $\omega \in C_S$ such that the following four conditions hold,
\begin{enumerate}
\item Identity \eqref{display:complement:of:lambda} holds,
\label{early:cond:display:complement}
\item Identity  \eqref{nice:rep} holds,
\label{early:cond:nice:rep}
\item $V_p(Y(\cdot,\omega),[G_i,\cdot])$ is continuous on $(G_i,D_i)$ for each $i \geq 1$,
\label{early:cond:continuous}
\item $\varphi_{p,q}(\cdot,\omega):\mathbb{R}_{+} \mapsto \mathbb{R}_{+}$ is well-defined and non-decreasing.
\label{early:cond:nondecreasing}
\end{enumerate}
By the discussion in Section \ref{Section:Excursion:Theory:Background}, together with the definition of $\varphi_{p,q}$ and Lemmas \ref{Lemma:Properties:Of:Var:Phi:P:Q:On:Lambda} and \ref{Lemma:Properties:Of:Phi:P:Q:On:Lambda:C}, and the discussion of the preceding paragraph, it follows that  $P^0(\mathcal{H}_1)$=1. It therefore
suffices to show that $\varphi_{p,q}(\cdot,\omega):\mathbb{R}_{+} \mapsto \mathbb{R}_{+} $ is  continuous
for each $\omega \in \mathcal{H}_1$. Hence, fix an arbitrary $\hat{\omega} \in \mathcal{H}_1$ for the remainder of this paragraph. Let $t \in \Lambda$ be arbitrary. We show that $\varphi_{p,q}$ is both left- and right-continuous at $t$. We start with left-continuity at $t$. Obviously then we can assume that $t>0$. If $t$ isolated from the left in $\Lambda$, then since identity \eqref{display:complement:of:lambda} holds by condition \ref{early:cond:display:complement}, $t=D_i$ for some $i \geq 1$, and so left-continuity follows from the fact that by condition \ref{early:cond:nondecreasing}, $\varphi_{p,q}$ is well-defined, the continuity of $V_p(Y,[G_i,\cdot])$ on $(G_i,D_i)$, which is guaranteed by condition \ref{early:cond:continuous}, and the identity \eqref{nice:rep} guaranteed by condition \ref{early:cond:nice:rep}. Suppose now that $t$ is not isolated from the left in $\Lambda$, i.e. there exists a sequence $\{t_n , n\ge 1\} \subset \Lambda$ such that $t_n\uparrow t$ as $n\to\infty$. Then, since \eqref{nice:rep} holds by condition \ref{early:cond:nice:rep}, it follows that
$$\lim_{n\to\infty}\left(\varphi_{p,q}(t) -  \varphi_{p,q}(t_n)\right) ~=~ \lim_{n\to\infty}\sum_{i:t_n\le G_i<t}(D_i-G_i)^q~=~0,$$
and so left-continuity follows since by condition \ref{early:cond:nondecreasing}, $\varphi_{p,q}$ is non-decreasing. The proof of right-continuity of $\varphi_{p,q}$ at $t$ is quite similar and so we omit the details.

We now complete the proof by showing that $P^0$-a.s., $\varphi_{p,q}(\mathbb{R}_{+})=\mathbb{R}_{+}$. First note that using the fact from Section \ref{Section:Excursion:Theory:Background} that  $L^{-1}$ is a subordinator under $P^0$, and the subadditivity property of strong $q$-variation (see (P3) in Section 2.2 of Chistyakov and Galkin \cite{chistyakov1998maps}), it is straightforward to show that $P^0$-a.s., $V_q(L^{-1},[0,A]) \rightarrow + \infty$ as $A \rightarrow +\infty$. Hence, since by the discussion in Section \ref{Section:Excursion:Theory:Background},  $P^0$-a.s., $L(t) \rightarrow + \infty$ as $t \rightarrow +\infty$, it follows by \eqref{Display:Definition:Of:Varphi} and the fact
that $\varphi_{p,q}:\mathbb{R}_{+} \mapsto \mathbb{R}_{+}$ is $P^0$-a.s. non-decreasing, that $P^0$-a.s., $\varphi_{p,q}(t) \rightarrow + \infty$ as $t \rightarrow + \infty$. Moreover, it is clear from \eqref{Display:Definition:Of:Varphi} that $P^0$-a.s., $\varphi_{p,q}(0)=0$. The desired result now follows since $\varphi_{p,q}$ is $P^0$-a.s. continuous on $\mathbb{R}_{+}$.
\end{proof}

\subsection{Proof of Proposition \ref{Proposition:Variation:Time:Chanaged:Holder}}

By Proposition \ref{Proposition:Main:Properties:Of:Varphi:P:Q}, we have that for each $p > \alpha$ and $1 > q  > \alpha/2$ the function $\varphi_{p,q}:\mathbb{R}_{+} \mapsto \mathbb{R}_{+}$ is $P^0$-a.s. well-defined, non-decreasing and continuous with $\varphi_{p,q}(\mathbb{R}_{+})=\mathbb{R}_{+}$. Hence, we may $P^0$-a.s. define its right-continuous inverse function
\begin{eqnarray*}
\varphi_{p,q}^{-1}(\tau) &=& \inf\{t \geq 0 : \varphi_{p,q}(t) > \tau\},~\tau \geq 0.
\end{eqnarray*}
In preparation for the proof of Proposition \ref{Proposition:Variation:Time:Chanaged:Holder}, we provide two lemmas regarding $\varphi_{p,q}^{-1}$.

\begin{lemma}\label{Lemma:Inverse:Of:Varphi:On:Zero:Set}
For each $p > \alpha$ and $1 > q > \alpha/2$, it follows that $P^0$-a.s.,  $\varphi_{p,q}^{-1} \circ \varphi_{p,q} (t)=t$ for $t \in \Lambda$. Moreover, $\varphi_{p,q}^{-1}$ is $P^0$-a.s. H\"{o}lder continuous on $\varphi_{p,q}(\Lambda)$ with H\"{o}lder exponent $1/q$ and H\"{o}lder constant $H(\varphi_{p,q}^{-1}) \leq 1$.
\end{lemma}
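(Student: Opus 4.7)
The plan is to handle the two claims of the lemma separately. For the identity $\varphi_{p,q}^{-1} \circ \varphi_{p,q}(t) = t$ on $\Lambda$, write $\varphi := \varphi_{p,q}$. Since $\varphi$ is $P^0$-a.s.\ non-decreasing by Proposition~\ref{Proposition:Main:Properties:Of:Varphi:P:Q}, the definition of the right-continuous inverse gives $\varphi^{-1}(\varphi(t)) \ge t$ for free. The reverse inequality is reduced to showing, $P^0$-a.s., that for every $t \in \Lambda$ and every $s > t$, $\varphi(s) > \varphi(t)$. I would split on whether $s \in \Lambda$ or not: when $s \in \Lambda$, strict monotonicity on $\Lambda$ (Lemma~\ref{Lemma:Properties:Of:Var:Phi:P:Q:On:Lambda}) gives the bound immediately; when $s \in \Lambda^c$, by \eqref{display:complement:of:lambda} (and $\tau_0 = 0$ $P^0$-a.s.) we have $s \in (G_j, D_j)$ for some $j \geq 1$, and $t \in \Lambda$ combined with $t < s < D_j$ forces $t \leq G_j$. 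The subcase $t < G_j$ is handled via $\varphi(s) \ge \varphi(G_j) > \varphi(t)$ by monotonicity together with Lemma~\ref{Lemma:Properties:Of:Var:Phi:P:Q:On:Lambda}; the subcase $t = G_j$ is exactly where Lemma~\ref{Lemma:Y:Positive:Variation} bites: taking $\varepsilon = s - G_j$ there yields $V_p(Y,[G_j,s]) > 0$, so the middle clause of \eqref{Display:Definition:Of:Varphi} gives $\varphi(s) > \varphi(G_j) = \varphi(t)$.

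For the H\"older bound, fix $\tau_1 < \tau_2$ in $\varphi(\Lambda)$, set $t_i := \varphi^{-1}(\tau_i)$, and use the identity just established to conclude $t_i \in \Lambda$ with $\varphi(t_i) = \tau_i$. Representation \eqref{nice:rep} (available since $q < 1$) then yields
\begin{equation*}
\tau_2 - \tau_1 \;=\; \varphi(t_2) - \varphi(t_1) \;=\; \sum_{j \,:\, t_1 \le G_j < t_2} (D_j - G_j)^q.
\end{equation*}
The open intervals $(G_j,D_j)$ appearing in this sum are pairwise disjoint and contained in $[t_1,t_2]$ (the inclusion uses $t_2 \in \Lambda$, which forces $D_j \le t_2$). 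Item~\ref{Condition:3:Submartingale} of Definition~\ref{vw}, applicable since $\alpha < 2$, implies that $\Lambda$ has Lebesgue measure zero $P^0$-a.s., and hence these excursion intervals exhaust $[t_1,t_2]$ up to a null set, giving $\sum_j (D_j - G_j) = t_2 - t_1$. Finally, since $q \le 1$, the subadditivity of $x \mapsto x^q$ on $\mathbb{R}_+$ yields $\sum_j (D_j - G_j)^q \ge \bigl(\sum_j (D_j - G_j)\bigr)^q = (t_2 - t_1)^q$, so $\tau_2 - \tau_1 \ge (t_2 - t_1)^q$, which rearranges to $|t_1 - t_2| \le |\tau_1 - \tau_2|^{1/q}$ and shows that the H\"older constant is at most one.

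The principal obstacle is the case $t = G_j$ in the first part: $\varphi$ is a priori only non-decreasing (not strictly increasing) on the entire excursion interval $(G_j,D_j)$, because $Y$ could in principle remain constant on a subinterval where $Z$ stays away from the boundary of the wedge. Lemma~\ref{Lemma:Y:Positive:Variation} is exactly the input that rules this out at the left endpoint of each excursion; once it is available, the remainder of the argument is a combination of the excursion decomposition, the zero-Lebesgue-measure property of $\Lambda$, and the elementary subadditivity inequality for $x^q$ with $q \le 1$.
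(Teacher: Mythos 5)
Your argument is correct; the first half follows the paper's route, while the H\"older bound is obtained by a genuinely different and arguably cleaner mechanism. For the identity $\varphi_{p,q}^{-1}\circ\varphi_{p,q}(t)=t$ on $\Lambda$, both you and the paper reduce to strict growth of $\varphi_{p,q}$ to the right of each $t\in\Lambda$, splitting according to whether $t$ is a left excursion endpoint $G_i$ (where Lemma \ref{Lemma:Y:Positive:Variation} is the key input) or a point not isolated from the right in $\Lambda$ (where strict monotonicity on $\Lambda$ from Lemma \ref{Lemma:Properties:Of:Var:Phi:P:Q:On:Lambda} suffices); the paper organizes the split by $t\in L^{-1}(\mathbb{R}_{+})$ versus $t\in\Lambda\setminus L^{-1}(\mathbb{R}_{+})$ and fixes a single null set by restricting to $\varepsilon=2^{-j}$, $j\ge 1$ (condition \ref{middle:cond:phi:excursion:intervals} of its set $\mathcal{H}_2$) --- a detail you should make explicit, since your $\varepsilon=s-G_j$ is random and Lemma \ref{Lemma:Y:Positive:Variation} is stated for fixed $\varepsilon$; monotonicity in $\varepsilon$ makes the dyadic reduction immediate. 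For the H\"older estimate the paper works with the subordinator $L^{-1}$: it writes $\varphi_{p,q}(t)=V_q(e,(L^{-1}(\mathbb{R}_{+}))_t^{-})$ via the change-of-variable property (P4) of \cite{chistyakov1998maps}, deduces $|\varphi_{p,q}^{-1}(\tau')-\varphi_{p,q}^{-1}(\tau)|^{q}\le\tau'-\tau$ on $\varphi_{p,q}(L^{-1}(\mathbb{R}_{+}))$ from the minimality and semi-additivity properties, and then needs a separate approximation step ($t_k'\uparrow G_i$) to reach the remaining points $\varphi_{p,q}(G_i)$ of $\varphi_{p,q}(\Lambda)$. You instead use the explicit representation \eqref{nice:rep}, the fact that $\Lambda$ is Lebesgue-null (Condition \ref{Condition:3:Submartingale} of Definition \ref{vw}), and the countable subadditivity of $x\mapsto x^{q}$ for $q\le 1$ to get $\tau_2-\tau_1=\sum_j(D_j-G_j)^{q}\ge(t_2-t_1)^{q}$ directly for any two points of $\varphi_{p,q}(\Lambda)$; your observation that $t_2\in\Lambda$ forces $D_j\le t_2$ for every $j$ in the sum is exactly what makes the excursion intervals tile $[t_1,t_2]$ up to a null set. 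This buys a uniform treatment of all of $\varphi_{p,q}(\Lambda)$ in one stroke, with no limiting argument and no appeal to the variation calculus of $L^{-1}$ beyond what is already encoded in \eqref{nice:rep}; the price is reliance on $q<1$ (needed both for \eqref{nice:rep} and for subadditivity), but that hypothesis is part of the lemma in any case.
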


\begin{proof}Let $p > \alpha$ and $1 > q > \alpha/2$. Next, let $\mathcal{H}_2$ be the set of all $\omega \in C_S$ such that the following five conditions hold,
\begin{enumerate}
\item There are no isolated points in $\Lambda$ and the identities \eqref{display:complement:of:lambda} and \eqref{Display:Disjoint:Intervals} hold,
\label{middle:cond:lambda:set}

\item  $L^{-1}$ is strictly increasing and right-continuous,
\label{middle:cond:l:inverse}

\item $\varphi_{p,q}:\mathbb{R}_{+} \mapsto \mathbb{R}_{+}$ is well-defined,
non-decreasing and continuous with
$\varphi_{p,q}(\mathbb{R}_{+})=\mathbb{R}_{+}$,
\label{middle:cond:phi:on:r}

\item $\varphi_{p,q}$ is strictly increasing on $\Lambda$,
\label{middle:cond:phi:increasing}

\item $V_1(Y,[G_i,D_i \wedge (G_i+2^{-j})]) > 0$ for $i,j \geq 1$.
\label{middle:cond:phi:excursion:intervals}

\end{enumerate}
By the discussion in Section \ref{Section:Excursion:Theory:Background},  Proposition \ref{Proposition:Main:Properties:Of:Varphi:P:Q}, and Lemmas \ref{Lemma:Properties:Of:Var:Phi:P:Q:On:Lambda} and \ref{Lemma:Y:Positive:Variation}, it follows that $P^{0}(\mathcal{H}_2)=1$. It therefore suffices to show that the results of the lemma hold for each $\omega \in \mathcal{H}_2$. Hence, fix an arbitrary $\hat{\omega} \in \mathcal{H}_2$
for the remainder of the proof.

We first prove that $\varphi_{p,q}^{-1} \circ \varphi_{p,q} (t)=t$ for $t \in \Lambda$. Since the identities \eqref{display:complement:of:lambda} and \eqref{Display:Disjoint:Intervals} are guaranteed by condition \ref{middle:cond:lambda:set}, no point  $t \in  L^{-1}(\mathbb{R}_{+})$ is isolated from the right in $\Lambda$. Hence, since by condition \ref{middle:cond:phi:increasing},  $\varphi_{p,q}$ is strictly increasing on $\Lambda $, it follows that $\varphi_{p,q}^{-1} \circ \varphi_{p,q} (t)=t$ for $t \in L^{1}(\mathbb{R}_{+})$. Next, note that by \eqref{Display:Disjoint:Intervals}, which is guaranteed by condition \ref{middle:cond:lambda:set}, it follows that if $t \in \Lambda \backslash L^{1}(\mathbb{R}_{+})$, then $t=G_i$ for some $i \geq 1$.
Hence, since by condition \ref{middle:cond:phi:excursion:intervals}, $V_1(Y,[G_i,D_i \wedge (G_i+2^{-j})]) > 0$ for $i,j \geq 1$, it follows since by condition \ref{middle:cond:phi:on:r}, $\varphi_{p,q}$ in well-defined  and non-decreasing on $\mathbb{R}_{+}$, that using \eqref{Display:Definition:Of:Varphi}, $\varphi_{p,q}(t^{'}) > \varphi_{p,q}(t)$ for $t^{'} > t$. Hence, it follows that $\varphi_{p,q}^{-1} \circ \varphi_{p,q} (t)=t$ for $t \in \Lambda \backslash L^{1}(\mathbb{R}_{+})$, and so we have now shown that $\varphi_{p,q}^{-1} \circ \varphi_{p,q} (t)=t$ for $t \in \Lambda$, as desired.

Next, we show that $\varphi_{p,q}^{-1}$ is H\"{o}lder continuous on $\varphi_{p,q}(\Lambda)$ with H\"{o}lder exponent $1/q$ and H\"{o}lder constant $H(\varphi_{p,q}^{-1}) \leq 1$. First note that since by condition \ref{middle:cond:phi:on:r}, $\varphi_{p,q}$ is well-defined on $\mathbb{R}_{+}$, it follows using the definition of $\varphi_{p,q}$ in \eqref{Display:Definition:Of:Varphi} that
\begin{eqnarray*}
\varphi_{p,q}(t) &=& V_{q}(L^{-1},[0,L(t)]),~t \in L^{-1}(\mathbb{R}_{+}).
\end{eqnarray*}
Trivially, $L^{-1}=e \circ L^{-1}$, where $e:\mathbb{R}_{+} \mapsto \mathbb{R}_{+}$ is the identity function, and so since $L^{-1}$ is non-decreasing by condition \ref{middle:cond:l:inverse}, the change-of-variable formula for strong $q$-variation (see (P4) of Chistyakov and Galkin \cite{chistyakov1998maps}) implies that \begin{eqnarray*}
\varphi_{p,q}(t)~=~ V_{q}(L^{-1},[0,L(t)])&=& V_{q}(e,L^{-1}([0,L(t)]))~=~V_{q}(e,(L^{-1}(\mathbb{R}_{+}))_{t}^{-} ),~t \geq 0,
\end{eqnarray*}
where the final equality follows since by condition \ref{middle:cond:l:inverse}, $L^{-1}$ is non-decreasing, and since $L^{-1} \circ L(t)=t$ for $t \in L^{-1}(\mathbb{R}_{+})$. Now note that since by condition \ref{middle:cond:phi:on:r}, $\varphi_{p,q}$ is continuous on $\mathbb{R}_{+}$ with $\varphi_{p,q}(\mathbb{R}_{+})=\mathbb{R}_{+}$, it follows that $\varphi_{p,q} \circ \varphi^{-1}_{p,q}(\tau)=\tau$ for $\tau \in \mathbb{R}_{+}$. Moreover, since it has been shown in the preceding discussion that $\varphi_{p,q}^{-1} \circ \varphi_{p,q} (t)=t$ for $t \in \Lambda$, it follows that $\varphi^{-1}_{p,q}(\tau) \in L^{-1}(\mathbb{R}_{+})$ for $\tau \in \varphi_{p,q}(L^{-1}(\mathbb{R}_{+}))$. Thus,
for $\tau, \tau^{'} \in \varphi_{p,q}(L^{-1}(\mathbb{R}_{+}))$ with $\tau \leq \tau^{'}$, by the minimality property and the semi-additivity property of strong $q$-variation (see (P1) and (P3) of Chistyakov and Galkin \cite{chistyakov1998maps}), it follows that
\begin{eqnarray*}
|\varphi_{p,q}^{-1}(\tau^{'})-\varphi_{p,q}^{-1}(\tau) |^{q} &\leq&V_{q}\left(e,\left(L^{-1}(\mathbb{R}_{+})\right)_{\varphi_{p,q}^{-1}(\tau)}^{\varphi_{p,q}^{-1}(\tau^{'})} \right)\\
&\leq&V_{q}\left(e,\left(L^{-1}(\mathbb{R}_{+})\right)_{\varphi_{p,q}^{-1}(\tau^{'})}^{-} \right)-V_{q}\left(e,\left(L^{-1}(\mathbb{R}_{+})\right)_{\varphi_{p,q}^{-1}(\tau)}^{-} \right)\\
&=&\tau^{'}-\tau.
\end{eqnarray*}
Thus, $\varphi_{p,q}^{-1}$ meets the desired H\"{o}lder conditions on $\varphi_{p,q}(L^{-1}(\mathbb{R}_{+}))$.

We now complete the proof by showing that $\varphi_{p,q}^{-1}$ meets the desired H\"{o}lder conditions on all of
$\varphi_{p,q}(\Lambda)$. Let $\tau,\tau^{'} \in \varphi_{p,q}(\Lambda)$.
If $\tau,\tau^{'} \in \varphi_{p,q}(L^{-1}(\mathbb{R}_{+})) $, then the H\"{o}lder conditions are met by the the preceding paragraph. On the other hand, suppose that $\tau \in \varphi_{p,q}(L^{-1}(\mathbb{R}_{+})) $
and $\tau^{'} \in \varphi_{p,q}(\Lambda) ~\backslash~ \varphi_{p,q}(L^{-1}(\mathbb{R}_{+})) $. Let $t \in  L^{-1}(\mathbb{R}_{+})$ and $t^{'} \in \Lambda \backslash  L^{-1}(\mathbb{R}_{+})$
be such that $\varphi_{p,q}(t)=\tau$ and $\varphi_{p,q}(t^{'})=\tau^{'}$. By the identity \eqref{Display:Disjoint:Intervals}, which holds by condition \ref{middle:cond:lambda:set}, it follows that $t^{'}=G_i$ for some $i \geq 1$. Thus, using the fact by condition \ref{middle:cond:lambda:set} that there are no isolated points in $\Lambda$ and that the identities \eqref{display:complement:of:lambda} and \eqref{Display:Disjoint:Intervals} hold, it is straightforward to show that there exists a non-decreasing sequence $\{t_k^{'}\}$ of elements of $L^{-1}(\mathbb{R}_{+})$ such that $t_k^{'} \uparrow G_i$ and $L(t_k^{'}) \uparrow L(G_i)$. Hence, by the continuity of $\varphi_{p,q}$ on $\mathbb{R}_{+}$, which is guaranteed by condition \ref{middle:cond:phi:on:r}, it follows that $\tau_k^{'}=\varphi_{p,q}(t^{'}_k) \uparrow \varphi_{p,q}(G_i)=\tau^{'}$. Thus, since $\tau_k^{'} \in \varphi_{p,q}(L^{-1}(\mathbb{R}_{+}))$, and noting that $\varphi_{p,q}^{-1} \circ \varphi_{p,q} (t)=t$ for $t \in \Lambda$ implies that $\varphi_{p,q}^{-1}(\tau_k^{'})=t_k^{'} \uparrow t^{'}=\varphi_{p,q}^{-1}(\tau^{'}) $, it follows that
\begin{eqnarray*}
|\varphi_{p,q}^{-1}(\tau)-\varphi_{p,q}^{-1}(\tau^{'})| ~=~\lim_{k \rightarrow \infty}|\varphi_{p,q}^{-1}(\tau)-\varphi_{p,q}^{-1}(\tau^{'}_k)| &\leq&
\lim_{k \rightarrow \infty}|\tau-\tau^{'}_k|^{1/q}~=~|\tau-\tau^{'}|^{1/q}.
\end{eqnarray*}
In a similar manner, it may be shown that $\varphi_{p,q}^{-1}$ meets the desired H\"{o}lder conditions if $\tau,\tau^{'} \in \varphi_{p,q}(\Lambda) ~\backslash~ \varphi_{p,q}(L^{-1}(\mathbb{R}_{+})) $.
\end{proof}

Now, for each $p > \alpha$ and $1 > q > \alpha/2$, let
\begin{eqnarray}
\Phi_{i} &=& \left(\frac{V_1(\varphi_{p,q} ,[G_i,D_i])}{V_p(Y ,[G_i,D_i])} \right)^{1/p},~~i \geq 1. \label{Display:Definition:Phi:I}
\end{eqnarray}

\begin{lemma}\label{Lemma:Inverse:Of:Varphi:On:Excursions}
For each $p > \alpha$ and $1 > q > \alpha/2$ and $i \geq 1$, it follows that $P^0$-a.s., $Y \circ \varphi_{p,q}^{-1} \circ \varphi_{p,q}(t)=Y(t)$ on $t \in [G_i,D_i]$ . Moreover, $Y \circ \varphi_{p,q}^{-1}$ is $P^0$-a.s. H\"{o}lder continuous on $[\varphi_{p,q}(G_i),\varphi_{p,q}(D_i)]$
 with H\"{o}lder exponent $1/p$ and H\"{o}lder constant $H_i(Y \circ \varphi_{p,q}^{-1}) \leq 1/\Phi_i$.
\end{lemma}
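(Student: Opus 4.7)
The strategy is to work on a single excursion interval $[G_i, D_i]$, where, by the third case of \eqref{Display:Definition:Of:Varphi}, the function $\varphi_{p,q}$ is merely an affine reparametrization of the accumulated $p$-variation $\psi_i(t) := V_p(Y, [G_i, t])$; this allows the H\"{o}lder representation for functions of finite $p$-variation from Proposition \ref{Proposition:Finite:Variation:Holder} to be transferred to $Y \circ \varphi_{p,q}^{-1}$. I work throughout on the $P^0$-full-measure set established in the proof of Lemma \ref{Lemma:Properties:Of:Phi:P:Q:On:Lambda:C} on which $0 < V_p(Y, [G_i, D_i]) < \infty$ and on which $\varphi_{p,q}$ enjoys the good properties given by Proposition \ref{Proposition:Main:Properties:Of:Varphi:P:Q}.

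First I would apply Proposition \ref{Proposition:Finite:Variation:Holder} to $Y$ restricted to $[G_i, D_i]$ to obtain a map $g_i$, defined on $\psi_i([G_i, D_i])$, which is H\"{o}lder with exponent $1/p$ and constant at most $1$ and satisfies $Y(t) = g_i(\psi_i(t))$ for every $t \in [G_i, D_i]$. Setting $c_i := (D_i - G_i)^q / V_p(Y, [G_i, D_i])$, the third case of \eqref{Display:Definition:Of:Varphi} reads $\varphi_{p,q}(t) = \varphi_{p,q}(G_i) + c_i \psi_i(t)$ on $[G_i, D_i]$, so $\psi_i$ and $\varphi_{p,q}$ share the same level sets there; in particular
$$Y(t) ~=~ g_i\!\left(\frac{\varphi_{p,q}(t) - \varphi_{p,q}(G_i)}{c_i}\right), \qquad t \in [G_i, D_i].$$

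Next I would verify that $\varphi_{p,q}^{-1}(\tau) \in [G_i, D_i]$ and $\varphi_{p,q}(\varphi_{p,q}^{-1}(\tau)) = \tau$ for every $\tau \in [\varphi_{p,q}(G_i), \varphi_{p,q}(D_i)]$. This requires that $\varphi_{p,q}$ be strictly increasing at $G_i$ from the right (a consequence of Lemma \ref{Lemma:Y:Positive:Variation} together with the non-degeneracy of $c_i$) and strictly increasing at $D_i$ from the right (a consequence of Lemma \ref{Lemma:Properties:Of:Var:Phi:P:Q:On:Lambda} together with the fact, noted in Section \ref{Section:Excursion:Theory:Background}, that $0$ is a regular point and hence $D_i$ is an accumulation point of $\Lambda$ from the right); continuity of $\varphi_{p,q}$ then gives $\varphi_{p,q}(\varphi_{p,q}^{-1}(\tau)) = \tau$. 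The composition identity $Y \circ \varphi_{p,q}^{-1} \circ \varphi_{p,q}(t) = Y(t)$ on $[G_i, D_i]$ is then immediate from the representation of the preceding paragraph, since both sides equal $g_i((\varphi_{p,q}(t) - \varphi_{p,q}(G_i))/c_i)$. Applying the same representation at two points $\tau_1, \tau_2 \in [\varphi_{p,q}(G_i), \varphi_{p,q}(D_i)]$ and using that $g_i$ is $(1/p)$-H\"{o}lder with constant at most one yields
$$\|Y(\varphi_{p,q}^{-1}(\tau_1)) - Y(\varphi_{p,q}^{-1}(\tau_2))\| ~\le~ c_i^{-1/p}\,|\tau_1 - \tau_2|^{1/p}.$$
To identify $c_i^{-1/p}$ with $1/\Phi_i$, I would observe that since $\varphi_{p,q}$ is continuous and non-decreasing on $[G_i, D_i]$, $V_1(\varphi_{p,q}, [G_i, D_i]) = \varphi_{p,q}(D_i) - \varphi_{p,q}(G_i)$; letting $t \uparrow D_i$ in the third case of \eqref{Display:Definition:Of:Varphi} and using continuity of $V_p(Y, [G_i, \cdot])$, this quantity equals $(D_i - G_i)^q$, and hence $c_i^{-1/p} = (V_p(Y, [G_i, D_i])/V_1(\varphi_{p,q}, [G_i, D_i]))^{1/p} = 1/\Phi_i$ by the definition \eqref{Display:Definition:Phi:I}.

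The main obstacle I anticipate is the endpoint bookkeeping at $\tau = \varphi_{p,q}(D_i)$ in the inversion step, which requires pinning down that $\varphi_{p,q}$ strictly exceeds $\varphi_{p,q}(D_i)$ arbitrarily soon after $D_i$; this ultimately relies on the regularity of the origin for $Z$ and the non-isolation of $D_i$ in $\Lambda$ from the right. Beyond this, the proof is a mechanical assembly of the ingredients already established in Section \ref{section:dirichlet:process}.
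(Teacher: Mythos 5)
Your proof is correct and follows essentially the same route as the paper. The paper formulates the argument through the rescaled process $\Phi_i Y$ and its accumulated $p$-variation $V_p(\Phi_i Y,[G_i,\cdot])$, then identifies the right-continuous inverse $V_p^{-1}(\Phi_i Y,\cdot)$ with $\varphi_{p,q}^{-1}(\varphi_{p,q}(G_i)+\cdot)$; you instead work directly with $\psi_i(t)=V_p(Y,[G_i,t])$ and carry the constant $c_i=\Phi_i^{p}$ through explicitly. These are the same computation stated with different bookkeeping, and both ultimately invoke the constructive part of Proposition~\ref{Proposition:Finite:Variation:Holder} (taking the accumulated $p$-variation as the time change) to produce the $(1/p)$-H\"older map, then rescale the argument to get the constant $1/\Phi_i$. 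One minor remark: you are slightly more careful than the paper about the endpoint verification that $\varphi_{p,q}^{-1}(\varphi_{p,q}(G_i))=G_i$ and $\varphi_{p,q}^{-1}(\varphi_{p,q}(D_i))=D_i$ (via Lemma~\ref{Lemma:Y:Positive:Variation} at $G_i$ and the non-isolation of $D_i$ in $\Lambda$ together with Lemma~\ref{Lemma:Properties:Of:Var:Phi:P:Q:On:Lambda} at $D_i$); the paper implicitly uses these facts when asserting that the infimum defining $\varphi_{p,q}^{-1}$ may be restricted to $[G_i,D_i]$, so your added attention is warranted but does not change the substance.
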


\begin{proof} Let $p > \alpha$ and $1 > q > \alpha/2$  and $i \geq 1$.  Since $\varphi_{p,q}:\mathbb{R}_{+} \mapsto \mathbb{R}_{+}$ is, by Proposition \ref{Proposition:Main:Properties:Of:Varphi:P:Q}, $P^{0}$-a.s. well-defined, it follows by \eqref{Display:Definition:Of:Varphi} that $P^0$-a.s.,
\begin{eqnarray}
\frac{V_1(\varphi_{p,q} ,[G_i,D_i])}{V_p(Y ,[G_i,D_i])} \cdot V_p(Y  ,[G_i,t])&=& V_p\left( \Phi_i Y  ,[G_i,t]\right),~t \in [G_i,D_i],~~\label{Display:Linear:P:Variation:Y:Interval}
\end{eqnarray}
where $\Phi_i$ is as defined in \eqref{Display:Definition:Phi:I} above. Next, define the right-continuous inverse function $V_p^{-1}(\Phi_i Y, \cdot):[0, V_p\left( \Phi_i Y  ,[G_i,D_i]\right)] \mapsto [G_i,D_i]$ by
\begin{eqnarray*}
V_p^{-1}(\Phi_i Y, \tau)&=& \inf\{t \in [G_i,D_i] :  V_p( \Phi_i Y  ,[G_i,t]) > \tau \},~\tau \in [0, V_p\left( \Phi_i Y  ,[G_i,D_i]\right)],
\end{eqnarray*}
where $\inf \emptyset = D_i$. Since $Y$ is $P^0$-a.s. continuous, it follows that $V_p(\Phi_i Y, \cdot)$ is $P^0$-a.s. continuous as well,
and so $P^0$-a.s.,
\begin{eqnarray*}
V_p(\Phi_i Y, V_p^{-1}(\Phi_i Y,\tau  ) )&=&\tau,~\tau \in [0, V_p\left( \Phi_i Y  ,[G_i,D_i]\right)].
\end{eqnarray*}
It then follows that $P^0$-a.s.,
\begin{eqnarray*}
Y \circ V_p^{-1}(\Phi_i Y) \circ  V_p(\Phi_i Y, [G_i,t])&=&Y(t),~t \in [G_i,D_i],
\end{eqnarray*}
and, by the discussion immediately following Proposition \ref{Proposition:Finite:Variation:Holder}, $Y \circ V_p^{-1}(\Phi_i Y, \cdot):[0, V_p\left( \Phi_i Y  ,[G_i,D_i]\right)] \mapsto \mathbb{R}^2$
is $P^0$-a.s. H\"{o}lder continuous with H\"{o}lder exponent $1/p$ and H\"{o}lder constant $H(Y \circ V_p^{-1}(\Phi_i Y, \cdot)) \leq 1/\Phi_i$.

Now recall by \eqref{Display:Definition:Of:Varphi} that $P^0$-a.s.,
\begin{eqnarray}
\varphi_{p,q}(t) &=&
\varphi_{p,q}(G_i)+  V_p( \Phi_i Y  ,[G_i,t]),~t \in  [G_i,D_i]. \label{display:vp:time:changed}
\end{eqnarray}
Moreover, by \eqref{Display:Linear:P:Variation:Y:Interval}, $P^0$-a.s.,
\begin{eqnarray*}
V_p\left( \Phi_i Y  ,[G_i,D_i]\right) &=& V_1\left( \varphi_{p,q}  ,[G_i,D_i]\right)~=~\varphi_{p,q}(D_i)-\varphi_{p,q}(G_i).
\end{eqnarray*}
Hence, in order to complete the proof, it suffices by the preceding paragraph to show that $P^0$-a.s.,
\begin{eqnarray*}
V_p^{-1}(\Phi_i Y, \tau) &=&\varphi_{p,q}^{-1}(\varphi_{p,q}(G_i)+\tau),~\tau \in [0, V_p\left( \Phi_i Y  ,[G_i,D_i]\right)]=[0,\varphi_{p,q}(D_i)-\varphi_{p,q}(G_i) ].
\end{eqnarray*}
However, the fact that by Proposition \ref{Proposition:Main:Properties:Of:Varphi:P:Q}, $\varphi_{p,q}$ is $P^0$-a.s. non-decreasing, implies that $P^0$-a.s.,
\begin{eqnarray*}
\varphi_{p,q}^{-1}(\varphi_{p,q}(G_i)+\tau) &=&\inf\{t \geq 0 : \varphi_{p,q}(t) > \varphi_{p,q}(G_i)+\tau \}\\
&=&\inf\{t \in [G_i,D_i] : \varphi_{p,q}(t) > \varphi_{p,q}(G_i)+\tau \},
\end{eqnarray*}
for $\tau \in [0,\varphi_{p,q}(D_i)-\varphi_{p,q}(G_i)]$.
Then, by \eqref{display:vp:time:changed}, $P^0$-a.s.,
\begin{eqnarray*}
\inf\{t \in [G_i,D_i] : \varphi_{p,q}(t) > \varphi_{p,q}(G_i)+\tau \}&=&\inf\{t \in [G_i,D_i] : V_p( \Phi_i Y  ,[G_i,t]) > \tau \}\\
&=&V_p^{-1}(\Phi_i Y, \tau),
\end{eqnarray*}
for $\tau \in [0,\varphi_{p,q}(D_i)-\varphi_{p,q}(G_i)]$.
\end{proof}

The proof of Proposition \ref{Proposition:Variation:Time:Chanaged:Holder} is as follows.

\begin{proof}[Proof of Proposition \ref{Proposition:Variation:Time:Chanaged:Holder}]Combining Lemmas \ref{Lemma:Inverse:Of:Varphi:On:Zero:Set} and \ref{Lemma:Inverse:Of:Varphi:On:Excursions} above, it follows that for each $p > \alpha$ and $1 > q > \alpha/2$, $P^0$-a.s., $Y \circ \varphi^{-1}_{p,q} \circ \varphi_{p,q}(t)=Y(t)$ for $t \in \mathbb{R}_{+}$.
It is also immediate by Proposition \ref{Proposition:Main:Properties:Of:Varphi:P:Q} that $\varphi_{p,q}$ is $P^0$-a.s. non-decreasing on $\mathbb{R}_{+}$, with $\varphi_{p,q}(\mathbb{R}_{+})=\mathbb{R}_{+}$. Hence, in order to complete the proof, it suffices by Proposition \ref{Proposition:Finite:Variation:Holder} and the discussion after it to show that for each $\alpha < p < 2$, there exists some $\alpha/2 < q < p/2 < 1$ such that $Y \circ \varphi^{-1}_{p,q} $ is $P^0$-a.s. locally H\"{o}lder continuous on $\mathbb{R}_{+}$ with H\"{o}lder exponent $1/p$.

Let $\mathcal{H}_3$ be the set of all $\omega \in C_S$ such that the following six conditions hold,
\begin{enumerate}

\item Identity \eqref{Display:Disjoint:Intervals} holds with $D_i \rightarrow + \infty$ as $i \rightarrow \infty$,
        \label{final:cond:disjoint}

\item $\varphi_{p,q}:\mathbb{R}_{+} \mapsto \mathbb{R}_{+}$ is well-defined, non-decreasing and continuous, with $\varphi_{p,q}(\mathbb{R}_{+})=\mathbb{R}_{+}$,  \label{final:cond:phi:well:defined}

\item $\varphi_{p,q}^{-1} \circ \varphi_{p,q} (t)=t$ for $t \in \Lambda$, and $\varphi_{p,q}^{-1}$  H\"{o}lder continuous on $\varphi_{p,q}(\Lambda)$ with H\"{o}lder exponent $1/q$ and H\"{o}lder constant $H(\varphi_{p,q}^{-1}) \leq 1$,
        \label{final:cond:holder:on:lambda}

\item For each $i \geq 1$, $Y \circ \varphi_{p,q}^{-1} \circ \varphi_{p,q}(t)=Y(t)$ on $t \in [G_i,D_i]$ . Moreover, $Y \circ \varphi_{p,q}^{-1}$  is H\"{o}lder continuous on $[\varphi_{p,q}(G_i),\varphi_{p,q}(D_i)]$
 with H\"{o}lder exponent $1/p$ and H\"{o}lder constant $H_i(Y \circ \varphi_{p,q}^{-1}) \leq 1/\Phi_i$,
  \label{final:cond:holder:on:excursion}

\item $X:\mathbb{R}_{+} \mapsto \mathbb{R}^2$ is locally H\"{o}lder continuous with H\"{o}lder exponent $\eta$ for each $0 \leq \eta < 1/2$, \label{final:cond:brownian:holder}

\item The components of $R^{-1}Y$ are non-decreasing on $[G_i,D_i]$ for each $i \geq 1$.
    \label{final:cond:components:y}

\end{enumerate}
 By the discussion in Section \ref{Section:Excursion:Theory:Background}, it is straightforward to show that condition \ref{final:cond:disjoint} holds $P^0$-a.s. Proposition \ref{Proposition:Main:Properties:Of:Varphi:P:Q}, and Lemmas \ref{Lemma:Y:Positive:Variation} and \ref{Lemma:Inverse:Of:Varphi:On:Zero:Set} imply that conditions \ref{final:cond:phi:well:defined} through \ref{final:cond:holder:on:excursion} hold $P^0$-a.s. By Part \ref{Proposition:Skorohod:Problem:Part:1} of Theorem \ref{Proposition:Skorokhod:Problem} of Section \ref{Section:Identification:of:X:and:Skorokhod:Problem}, $X$ is a standard 2-dimensional Brownian motion under $P^0$, and so condition \ref{final:cond:brownian:holder} holds $P^0$-a.s. by Remark 2.12 of \cite{KaratzasShreve}. Condition \ref{final:cond:components:y} holds $P^0$-a.s. by Part \ref{Proposition:Skorohod:Problem:Part:2} of Theorem \ref{Proposition:Skorokhod:Problem}. Thus, $P^{0}(\mathcal{H}_3)=1$. In order to complete the proof, it therefore suffices to show that for each $\omega \in \mathcal{H}_3$, for each $\alpha < p < 2$, there exists some $\alpha/2 < q < p/2 < 1$ such that $Y \circ \varphi^{-1}_{p,q}(\cdot,\omega) $ is  locally H\"{o}lder continuous on $\mathbb{R}_{+}$ with H\"{o}lder exponent $1/p$. Hence, fix an arbitrary $\hat{\omega} \in \mathcal{H}_3$  for the remainder of the proof.

First note that since by condition \ref{final:cond:disjoint}, $D_i \rightarrow + \infty$ as $i \rightarrow \infty$, and since by condition \ref{final:cond:phi:well:defined}, $\varphi_{p,q}(t) \rightarrow +\infty$ as $t \rightarrow +\infty$, it suffices to show that
$Y \circ \varphi^{-1}_{p,q} $ is H\"{o}lder continuous on $[0,\varphi_{p,q}(D_i)]$ for each $i \geq 1$. Let $i \geq 1$ and $\tau,\tau^{'} \in [0,\varphi_{p,q}(D_i)]$ with $0 \leq \tau \leq \tau^{'} \leq \varphi_{p,q}(D_i)$ such that $\tau,\tau^{'} \in \varphi_{p,q}(\Lambda)$. Then, by condition \ref{final:cond:holder:on:lambda}, $\varphi_{p,q}^{-1}(\tau),\varphi_{p,q}^{-1}(\tau^{'}) \in \Lambda $, and so $Y \circ \varphi_{p,q}^{-1}(\tau)=-X \circ \varphi_{p,q}^{-1}(\tau)$ and $Y \circ \varphi_{p,q}^{-1}(\tau^{'})=-X \circ \varphi_{p,q}^{-1}(\tau^{'})$. This then implies that
\begin{eqnarray}
\| Y \circ \varphi_{p,q}^{-1}(\tau^{'})-Y \circ \varphi_{p,q}^{-1}(\tau) \|&=&\| X \circ \varphi_{p,q}^{-1}(\tau^{'})-X \circ \varphi_{p,q}^{-1}(\tau) \|. \label{Display:Increment:Y:On:Image:Lambda}
\end{eqnarray}
Now, by condition \ref{final:cond:brownian:holder}, for each $0 \leq \eta < 1/2$ and $T \geq 0$, there exists a constant $C_{\eta,T}$ such that
\begin{eqnarray}
\| X(t)-X(s)\| &\leq& C_{\eta,T} |t-s|^{\eta},~0 \leq s \leq t \leq T. \label{Display:Brownian:Local:Holder}
\end{eqnarray}
Moreover, by condition \ref{final:cond:holder:on:lambda} and the fact that $\tau,\tau^{'} \in [0,\varphi_{p,q}(D_i)]$ with $\tau,\tau^{'} \in \varphi_{p,q}(\Lambda)$, it follows since $\varphi_{p,q}:\mathbb{R}_{+} \mapsto \mathbb{R}_{+}$ is non-decreasing by condition \ref{final:cond:phi:well:defined}, that $0 \leq \varphi_{p,q}^{-1}(\tau) \leq \varphi_{p,q}^{-1}(\tau^{'}) \leq D_i$. Therefore, by \eqref{Display:Brownian:Local:Holder}, for each $0 \leq \eta < 1/2$, it follows again by condition \ref{final:cond:holder:on:lambda} that
\begin{eqnarray}
\| X \circ \varphi_{p,q}^{-1}(\tau^{'})-X \circ \varphi_{p,q}^{-1}(\tau) \| &\leq&C_{\eta,D_i}|\varphi_{p,q}^{-1}(\tau^{'})- \varphi_{p,q}^{-1}(\tau) |^{\eta} ~\leq~C_{\eta,D_i}|\tau^{'}- \tau |^{\eta/q}. \label{Display:Increment:X:On:Image:Lambda}
\end{eqnarray}
Now let $0 \leq  \eta^{\star} < 1/2$ be such that $ q < \eta^{\star} p$. Note that such an $\eta^{\star}$ exists since by assumption $q < p/2$.
Then $\eta^{\star} /q > 1/p$, and since $0 \leq \tau \leq \tau^{'} \leq \varphi_{p,q}(D_i)$, it follows that
$|\tau^{'}- \tau |^{\eta^{\star}/q} \leq K_{\eta^{\star},i} |\tau^{'}- \tau |^{1/p}$, where $K_{\eta^{\star},i} = (\varphi_{p,q}(D_i))^{\eta^{\star}/q-1/p}$. Hence, by \eqref{Display:Increment:Y:On:Image:Lambda} and \eqref{Display:Increment:X:On:Image:Lambda},
\begin{eqnarray}
\| Y \circ \varphi_{p,q}^{-1}(\tau^{'})-Y \circ \varphi_{p,q}^{-1}(\tau) \| &\leq& K_{\eta^{\star},i} C_{\eta^{\star},D_i}|\tau^{'}-\tau |^{1/p}. \label{Display:Bound:Increment:Y:On:Image:Lambda}
\end{eqnarray}

Next, suppose that $\tau,\tau^{'} \in [0,\varphi_{p,q}(D_i)]$ with $0 \leq \tau \leq \tau^{'} \leq \varphi_{p,q}(D_i)$, and such that $\tau \in \varphi_{p,q}(\Lambda)$ and $\tau^{'} \notin \varphi_{p,q}(\Lambda)$.
In this case, by conditions \ref{final:cond:disjoint} and \ref{final:cond:phi:well:defined}, it follows that $\tau^{'} \in (\varphi_{p,q}(G_j),\varphi_{p,q}(D_j))$ for some $j \geq 1$.  By condition \ref{final:cond:holder:on:excursion}, it follows that
\begin{eqnarray*}
\| Y \circ \varphi_{p,q}^{-1}(\tau^{'})-Y \circ \varphi_{p,q}^{-1}(\varphi_{p,q}(G_j)) \| &\leq (1/\Phi_j)|\tau^{'}-\varphi_{p,q}(G_j) |^{1/p}.
\end{eqnarray*}
Also since $\varphi_{p,q}(G_j) \in \varphi_{p,q}(\Lambda) \cap [0,\varphi_{p,q}(D_i)]$, it follows by \eqref{Display:Bound:Increment:Y:On:Image:Lambda} that
\begin{eqnarray*}
\| Y \circ \varphi_{p,q}^{-1}(\varphi_{p,q}(G_j))-Y \circ \varphi_{p,q}^{-1}(\tau) \| &\leq&K_{\eta^{\star},i} C_{\eta^{\star},D_i}|\varphi_{p,q}(G_j)- \tau |^{1/p}.
\end{eqnarray*}
Hence, since $0 \leq \tau \leq \varphi_{p,q}(G_j) < \tau^{'} \leq \varphi_{p,q}(D_i)$, it follows by the triangle inequality that
\begin{eqnarray}
\| Y \circ \varphi_{p,q}^{-1}(\tau^{'})-Y \circ \varphi_{p,q}^{-1}(\tau) \| &\leq ((1/\Phi_j)+ K_{\eta^{\star},i} C_{\eta^{\star},D_i})|\tau^{'}-\tau |^{1/p}. \label{Display:Bound:Increment:Y:On:Lambda:Not:Lambda}
\end{eqnarray}
Similar reasoning leads to the same inequality if we assume that $\tau,\tau^{'} \in [0,\varphi_{p,q}(D_i)]$ with $0 \leq \tau \leq \tau^{'} \leq \varphi_{p,q}(D_i)$, and such that $\tau \notin \varphi_{p,q}(\Lambda)$ and $\tau^{'} \in \varphi_{p,q}(\Lambda)$.

Next consider $\tau,\tau^{'} \in [0,\varphi_{p,q}(D_i)]$ with $0 \leq \tau \leq \tau^{'} \leq \varphi_{p,q}(D_i)$, and such that $\tau,\tau^{'} \notin \varphi_{p,q}(\Lambda)$. Suppose first that
$\tau \in (\varphi_{p,q}(G_j),\varphi_{p,q}(D_j))$ and $\tau \in (\varphi_{p,q}(G_k),\varphi_{p,q}(D_k))$ with $j \neq k$. In this case, similar reasoning to the above leads to the bound
\begin{eqnarray}
\| Y \circ \varphi_{p,q}^{-1}(\tau^{'})-Y \circ \varphi_{p,q}^{-1}(\tau) \| &\leq ((1/\Phi_j)+ K_{\eta^{\star},i} C_{\eta^{\star},D_i}+(1/\Phi_k))|\tau^{'}-\tau |^{1/p}. \label{Display:Bound:Increment:Y:Not:Lambda:Not:Lambda:Big:Diff}
\end{eqnarray}
On the other hand, if $\tau,\tau^{'} \in (\varphi_{p,q}(G_j),\varphi_{p,q}(D_j))$, it is immediate by condition \ref{final:cond:holder:on:excursion} that
\begin{eqnarray}
\| Y \circ \varphi_{p,q}^{-1}(\tau^{'})-Y \circ \varphi_{p,q}^{-1}(\varphi_{p,q}(\tau)) \| &\leq (1/\Phi_j)|\tau^{'}-\tau |^{1/p}. \label{Display:Bound:Increment:Y:Not:Lambda:Not:Lambda:Small:Diff}
\end{eqnarray}

Combining the cases \eqref{Display:Bound:Increment:Y:On:Image:Lambda},\eqref{Display:Bound:Increment:Y:On:Lambda:Not:Lambda},\eqref{Display:Bound:Increment:Y:Not:Lambda:Not:Lambda:Big:Diff} and \eqref{Display:Bound:Increment:Y:Not:Lambda:Not:Lambda:Small:Diff} we now obtain that
\begin{eqnarray*}
\| Y \circ \varphi_{p,q}^{-1}(\tau^{'})-Y \circ \varphi_{p,q}^{-1}(\tau) \| &\leq (2 \Theta_i+ K_{\eta^{\star},i} C_{\eta^{\star},D_i})|\tau^{'}-\tau |^{1/p}~\textrm{for}~\tau,\tau^{'} \in [0,\varphi_{p,q}(D_i)],
\end{eqnarray*}
where
\begin{eqnarray*}
\Theta_i &=& \sup \{(1/\Phi_k) :  D_k \leq D_i\}.
\end{eqnarray*}
Hence, in order to show that $Y \circ \varphi^{-1}_{p,q} $ is H\"{o}lder continuous on $[0,\varphi_{p,q}(D_i)]$ with H\"{o}lder exponent $1/p$, it suffices to show that $\Theta_i   < + \infty$.
Using the definition of $\Phi_i$ in \eqref{Display:Definition:Phi:I} it suffices to show that there exists  $ 0 < K_i < + \infty$ such that for $k \geq 1$,
\begin{eqnarray}
 1\{D_k \leq D_i\} \left(\frac{V_p(Y ,[G_k,D_k])}{V_1(\varphi_{p,q} ,[G_k,D_k])} \right)^{1/p} &<&   K_i . \label{Display:Inequality:Between:Variations}
\end{eqnarray}

Recall the definition of the reflection matrix $R$ from Section \ref{Subsection:Semimartingale:Decomposition:Of:X:On:Excursion:Intervals}
 and let $\bar{Y}=R^{-1}Y$.  By condition \ref{final:cond:components:y}, the components of $\bar{Y} $ are non-decreasing on $[G_k,D_k]$ for each $k \geq 1$. Thus, since $Z  = X   + Y $ and recalling that $Z(G_k)=Z(D_k)=0$, it follows that
\begin{eqnarray*}
Y(D_k)-Y(G_k) &=& - (X(D_k)-X(G_k)).
\end{eqnarray*}
On the other hand, by the definition of $\bar{Y}$,
\begin{eqnarray*}
\bar{Y}(D_k)-\bar{Y}(G_k) &=&R^{-1}(Y(D_k)-Y(G_k)).
\end{eqnarray*}
Hence, using the local H\"{o}lder continuity \eqref{Display:Brownian:Local:Holder} of $X$ given by condition \ref{final:cond:brownian:holder}, it follows that for $k \geq 1$ such that $D_k \leq D_i$,
\begin{eqnarray}
 \| \bar{Y}(D_k)-\bar{Y}(G_k) \|~\leq~  \| R^{-1}\| \|X(D_k)-X(G_k)\|  &\leq& \| R^{-1}\| C_{\eta,D_i}  (D_i-G_i)^{\eta},\label{Display:YBar:Less:Than:Holder}
\end{eqnarray}
for each $0 \leq  \eta < 1/2$, where $\|R^{-1}\|$ denotes the operator norm of $R^{-1}$. Next, for $x = (x_1,x_2) \in \mathbb{R}^2$, it is straightforward to show that
\begin{eqnarray*}
\|x \|^{\rho}&\leq&|x_1|^{\rho}+|x_2|^{\rho}~\textrm{for}~0\leq \rho \leq 2.
\end{eqnarray*}
Hence, letting $\bar{Y}=(\bar{Y}_1,\bar{Y}_2)$ and using the definition of strong $p$-variation, it is immediate since $\alpha < p < 2$ that
\begin{eqnarray*}
V_p(\bar{Y},[G_k,D_k]) &\leq& V_p(\bar{Y}_1,[G_k,D_k])+V_p(\bar{Y}_2,[G_k,D_k]).
\end{eqnarray*}
However, since $\bar{Y} $ has components which are non-decreasing on $[G_k,D_k]$ and $1 < \alpha < p $, it follows that
\begin{eqnarray*}
V_p(\bar{Y}_{\ell},[G_k,D_k]) &=& (\bar{Y}_{\ell}(D_k)-\bar{Y}_{\ell}(G_k) )^{p}~\leq~ \|\bar{Y}(D_k)-\bar{Y}(G_k)\|^{p}~\textrm{for}~\ell=1,2.
\end{eqnarray*}
Thus, by \eqref{Display:YBar:Less:Than:Holder},
\begin{eqnarray*}
V_p(\bar{Y} ,[G_k,D_k]) &\leq&\bar{C}_{\eta,D_i}(D_k-G_k)^{\eta p},
\end{eqnarray*}
where $\bar{C}_{\eta,D_i}=2 (\| R^{-1}\|C_{\eta,D_i})^{p} $. Next, note that
\begin{eqnarray*}
V_p(Y ,[G_k,D_k])&=&V_p(R\bar{Y} ,[G_k,D_k])~\leq~\|R\|^{p} V_p(\bar{Y},[G_k,D_k]),
\end{eqnarray*}
where $\|R\|$ denotes the operator norm of $R$. Hence,
\begin{eqnarray*}
V_p(Y ,[G_k,D_k])&\leq&\bar{\bar{C}}_{\eta,D_i}(D_k-G_k)^{\eta p}, \end{eqnarray*}
where $\bar{\bar{C}}_{\eta,D_i}=2  (\|R\|\| R^{-1}\|C_{\eta,D_i})^{p} $. Now again let $0 \leq  \eta^{\star} < 1/2$ be such that $ q < \eta^{\star} p$.
Then, since $(D_k-G_k) < D_i$, it follows that
\begin{eqnarray*}
\frac{V_p(Y ,[G_k,D_k])}{(D_k-G_k)^q }&\leq&\bar{\bar{\bar{C}}}_{\eta^{\star},D_i},
\end{eqnarray*}
where $\bar{\bar{\bar{C}}}_{\eta,D_i} =(D_i)^{\eta p-q} \bar{\bar{C}}_{\eta,D_i} $. Hence, \eqref{Display:Inequality:Between:Variations} holds with $K_i=(\bar{\bar{\bar{C}}}_{\eta^{\star},D_i})^{1/p}$.
\end{proof}

\section{Proof of Theorem \ref{Theorem:Converse:Dirichlet:Process:Result}}
\label{section:converse:dirichlet}

Theorem \ref{Theorem:Main:Dirichlet:Process:Result} implies that $Z$ has the unique decomposition $Z=X+Y$,
where for each $z \in S$, $X$ is a standard 2-dimensional Brownian motion started at $z$ and $Y$ is a process of zero energy on
$(C_S,\mathcal{F},\mathcal{F}_t,P^z)$. In the proof
of Theorem \ref{Theorem:Main:Dirichlet:Process:Result}, it was also shown (see \eqref{display:used:before:in:proof} and the ensuing arguments) that for each $2 > p  > \alpha$ and $z \in S$,
\begin{eqnarray}
P^z(V_p(Y,[0,T]) < \infty)&=&1,~T \geq 0. \label{display:use:initally:prove:roughness}
\end{eqnarray}
The fact that \eqref{display:use:initally:prove:roughness} may be extended to all $p > \alpha$ is an immediate consequence of the identity $V_q(Y,[0,T]) \leq (V_p(Y,[0,T]))^{q/p} $ for $1 \leq p \leq q$ (see Remark 2.5 of Chistyakov and Galkin \cite{chistyakov1998maps}). Hence, in order prove Theorem \ref{Theorem:Converse:Dirichlet:Process:Result} it only remains to prove the partial converse result \eqref{Display:Condition:Converse:Main:Dirichlet}.

Recall from Section \ref{Section:Excursion:Theory:Background} the definition of $L$ as the local time of $Z$ at the origin and let $Y \circ L^{-1}$ be the process defined by $Y \circ L^{-1}=\{Y \circ L^{-1}(t), t \geq 0 \}$. For each $t \geq 0$, we have by \eqref{def:inverse:local:time} that $L^{-1}(t)$ is a stopping time relative to the filtration $\mathcal{F}_t$ and so we may define the new filtration $\mathcal{F}_{L^{-1}}=\{\mathcal{F}_{L^{-1}(t)}, t \geq 0 \} $.

\begin{proposition}\label{Lemma:Y:Composed:With:L:Inverse:Is:Levy} For each $z \in S$, the process $Y \circ L^{-1}$ is a L\'{e}vy process on $(C_S,\mathcal{F},\mathcal{F}_{L^{-1}},P^z)$. In particular, it is an  $\alpha$-stable process on  $(C_S,\mathcal{F},\mathcal{F}_{L^{-1}},P^0)$,  i.e. under $P^0$,
\begin{equation}\lambda^{-{1\over\alpha}}  Y \circ L^{-1}(\lambda\cdot)  \buildrel d \over = Y \circ L^{-1}(\cdot),\quad \lambda>0.\label{stable}\end{equation}
\end{proposition}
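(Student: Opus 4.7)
The plan is to combine the strong Markov property of $Z$ at the stopping times $L^{-1}(s)$ with Brownian scaling. Note first that $L^{-1}(s)$ is an $\mathcal{F}_t$-stopping time for each $s\ge 0$, by right-continuity of $\mathcal{F}_t$ together with the continuity and adaptedness of $L$. The key structural observation, established in Section \ref{Section:Excursion:Theory:Background}, is that $L^{-1}(\mathbb{R}_+)\subset\Lambda$, so $Z(L^{-1}(s))=0$ for every $s\ge 0$, $P^z$-a.s. Since $Y=Z-X$, this immediately yields
\[
Y(L^{-1}(t))-Y(L^{-1}(s)) \;=\; -\bigl(X(L^{-1}(t))-X(L^{-1}(s))\bigr),\qquad 0\le s\le t,
\]
and reduces the analysis of the increments of $Y\circ L^{-1}$ to those of $-X\circ L^{-1}$ at the random times $L^{-1}(s)$.

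For stationary independent increments I would apply the strong Markov property of $Z$ at $L^{-1}(s)$. Since $Z(L^{-1}(s))=0$, the shifted path $\omega'=\omega(L^{-1}(s)+\cdot)$ has law $P^0$ under $P^z$ and is independent of $\mathcal{F}_{L^{-1}(s)}$. Using the shift and additivity identities \eqref{F:shifted} and \eqref{F:additive} for the functional $F_{\gamma^0}$ from Lemma \ref{sub}, combined with the definition \eqref{X:def} and the identity $Z(L^{-1}(s))=0$, the Doob--Meyer decomposition of the shifted RBM $Z'$ takes the form $Z'=X'+Y'$ with $X'(\cdot)=X(L^{-1}(s)+\cdot)-X(L^{-1}(s))$ and $Y'(\cdot)=Y(L^{-1}(s)+\cdot)-Y(L^{-1}(s))$. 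Moreover, the normalized local time of $Z'$ at $0$ is $L'(u)=L(L^{-1}(s)+u)-s$, with inverse $(L')^{-1}(r)=L^{-1}(s+r)-L^{-1}(s)$, so
\[
Y\circ L^{-1}(s+r)-Y\circ L^{-1}(s) \;=\; Y'\circ (L')^{-1}(r)
\]
is independent of $\mathcal{F}_{L^{-1}(s)}$ and has the same law under $P^z$ as $Y\circ L^{-1}(r)$ has under $P^0$. This suffices for the L\'evy property.

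For the scaling relation \eqref{stable} under $P^0$, I would fix $\lambda>0$ and set $Z_\lambda(t)=\lambda^{-1/2}Z(\lambda t)$. Brownian scaling for RBM (verified directly against the submartingale problem of Definition \ref{vw}) shows $Z_\lambda\buildrel d\over= Z$ under $P^0$. By uniqueness of the Doob--Meyer decomposition in Theorem \ref{Theorem:Main:Dirichlet:Process:Result}, together with the facts that $X_\lambda(t):=\lambda^{-1/2}X(\lambda t)$ is a standard Brownian motion and that $Y_\lambda(t):=\lambda^{-1/2}Y(\lambda t)$ inherits zero energy from $Y$, one obtains $(Z_\lambda,Y_\lambda)\buildrel d\over=(Z,Y)$ jointly. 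The normalized local time of $Z_\lambda$ at $0$ must be of the form $\tilde L(t)=c_\lambda L(\lambda t)$ by uniqueness up to a multiplicative constant, and pinning down $c_\lambda$ via $E^0[\int_0^\infty e^{-t}\,d\tilde L(t)]=1$ together with the stable Laplace identity $E^0[e^{-\mu L^{-1}(a)}]=e^{-a\mu^{\alpha/2}}$ (a short Fubini calculation) yields $c_\lambda=\lambda^{-\alpha/2}$ and hence $\tilde L^{-1}(a)=\lambda^{-1}L^{-1}(\lambda^{\alpha/2}a)$. Combining these identifications gives $\lambda^{-1/2}Y\circ L^{-1}(\lambda^{\alpha/2}\cdot)\buildrel d\over= Y\circ L^{-1}(\cdot)$ under $P^0$, and substituting $\lambda\mapsto\lambda^{2/\alpha}$ produces \eqref{stable}.

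The main obstacle will be the shift step in the L\'evy part: one must carefully verify via Lemma \ref{sub}(b) that $F_{\gamma^0}$ behaves correctly on the shifted path under $P^{Z(L^{-1}(s))}=P^0$, and that the additivity \eqref{F:additive} -- only available on the set $\{F_{\gamma^0}\ne\underline 0\}$ -- holds on a set of full measure along the orbit of the shift, so that the shifted Brownian motion and zero-energy process are actually the increments of $X$ and $Y$ themselves rather than some other Doob--Meyer decomposition. Once this identification is pinned down, the remaining arguments are routine given the machinery developed in Sections \ref{Section:Identification:of:X:and:Skorokhod:Problem} and \ref{Section:Excursion:Theory:Background}.
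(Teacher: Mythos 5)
Your argument for the L\'evy property is essentially the paper's: both rest on the additivity of $L$ (giving $L^{-1}(b)=L^{-1}(a)+L^{-1}(b-a,Z(L^{-1}(a)+\cdot))$), the fact that $Z(L^{-1}(a))=0$ so that increments of $Y\circ L^{-1}$ are negatives of increments of $X\circ L^{-1}$, and the strong Markov property combined with the shift identity \eqref{F:shifted} and parts (b)--(c) of Lemma \ref{sub} to identify the shifted increment with $-X(L^{-1}(b-a))$ under $P^0$; you correctly flag the one delicate point (that $F_{\gamma^z}$ must agree with $X$ on the shifted path and be nonzero there), which is exactly what \eqref{stoppingtime} and \eqref{z:is:zero} are for. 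For the scaling relation your route genuinely differs from the paper's. The paper derives the joint identity $(\lambda^{-1/2}X(\lambda\cdot),\lambda^{-\alpha/2}L(\lambda\cdot))\buildrel d\over=(X,L)$ by expressing $X$ as the explicit path functional $F_{\lambda^{-1/2}\epsilon^\lambda}$ (using \eqref{linear}) and $L$ as $cA$, where $A$ is the increasing part of Williams' submartingale $\phi(Z)$ with $\phi(r,\theta)=r^\alpha\cos(\alpha\theta-\theta_1)$, whose decomposition scales with exponent $\alpha/2$. You instead invoke uniqueness of the Doob--Meyer decomposition to transfer the law of $(Z,Y)$ to $(Z_\lambda,Y_\lambda)$, and Blumenthal--Getoor uniqueness of local time up to a constant to identify $\tilde L=c_\lambda L(\lambda\cdot)$, pinning down $c_\lambda=\lambda^{-\alpha/2}$ from the normalization and the Laplace exponent of $L^{-1}$ (which the paper quotes from Williams in Section 2, so this is not circular). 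This is shorter and avoids $\phi$ entirely, but it buys that economy at the cost of a transfer-of-joint-law step that must be justified: pathwise uniqueness of the decomposition on a fixed filtered space does not by itself yield $(Z_\lambda,Y_\lambda)\buildrel d\over=(Z,Y)$; one must argue that $Y$ (and likewise the normalized local time) is determined by $Z$ through a fixed measurable functional, so that the same functional applied to $Z_\lambda$ produces $Y_\lambda$ and $\tilde L$. The paper makes precisely this explicit with the identities $X_\lambda=F_{\lambda^{-1/2}\epsilon^\lambda}\circ Z_\lambda$ and $A_\lambda=A\circ Z_\lambda$, and you would need an analogous statement (available via Lemma \ref{sub} and \eqref{linear}) to make your scaling step airtight. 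With that supplied, your argument is correct.
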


\begin{proof}
We start with a remark on notation. It is customary to write  $L(t)$ instead of $L(t,\omega)=L(t,Z(\cdot,\omega))$, as we did so far in this paper. However, in this proof occasionally we need to write down the local time at time $t$ calculated for an element of $C_s$ other than $\omega=Z(\cdot,\omega)$. For example, for the additivity property, we need to write down the local time at time $t-s$ applied to $Z(s+\cdot,\omega)$ for some $0\le s\le t$. In order to strike a compromise between clarity and brevity, we shall write $L(t)$ instead of $L(t,\omega)= L(t,Z(\cdot,\omega))$. However,  for $u,s\ge 0$ we shall write $L(u,Z(s+\cdot))$ instead of  $L(u, Z(s+\cdot,\omega))$.
In this spirit, we write the additivity property as
\begin{equation}L(t)=L(s)+ L(t-s,Z(s+\cdot)),\label{add}\end{equation}
$P^z$-a.s., for all $z\in S$, and $0\le s \le t$. The exceptional set on which \eqref{add} does not hold does not depend on $s$ and $t$ since $L$ is assumed to be perfect (see Section \ref{Section:Excursion:Theory:Background}).
Let $0\le a < b$ be arbitrary. Then applying \eqref{add} to $s=L^{-1}(a)$, we obtain
\begin{eqnarray}
L^{-1}(b) &=& \inf\Big\{t\ge L^{-1}(a): L(t)>b\Big\} \nonumber\\
&=&\inf\Big\{t\ge L^{-1}(a): a+ L  \big(t-L^{-1}(a)    ,Z(L^{-1}(a)+\cdot) \big)  >b\Big\}  \nonumber\\
&=&L^{-1}(a)+\inf\Big\{u\ge 0: L  \big(u   ,Z(L^{-1}(a)+\cdot) \big)  >b-a\Big\} \nonumber \\
&=&L^{-1}(a) +L^{-1}(b-a, Z(L^{-1}(a)+\cdot)),\label{localtime}
\end{eqnarray}
$P^z$-a.s., for every $z\in S$.
 Then, for any $B\in{\mathcal B}(S)$, by \eqref{gamma:z}, \eqref{F:additive}, \eqref{localtime}, \eqref{z:is:zero}, the Strong Markov Property (see Theorem 3.14 of Varadhan and Williams \cite{varadhan1985brownian}), and the fact that by \ref {Display:Disjoint:Intervals}  $Z(L^{-1}(a))=0$, we have
\begin{eqnarray*}
&&P^z\left(Y(L^{-1}(b))- Y(L^{-1}(a))\in B\ |\ {\mathcal F}_{L^{-1}(a)}\right)\\
&=&
P^z\left(X(L^{-1}(a))- X(L^{-1}(b))\in B\ |\ {\mathcal F}_{L^{-1}(a)}\right)\\
&=&P^z\left(F_{\gamma^z}\left(Z(\cdot)\right)\left(L^{-1}(a)\right)-  F_{\gamma^z}\left(Z(\cdot)\right)\left(L^{-1}(b)\right)      \in B  \ |\ {\mathcal F}_{L^{-1}(a)}\right)\\
&=&P^z\left(-F_{\gamma^z}\left(Z\left(L^{-1}(a)+\cdot\right)\right)\left(L^{-1}(b) - L^{-1}(a)\right)\in B\ |\ {\mathcal F}_{L^{-1}(a)}\right)\\
&=&P^z\left(-F_{\gamma^z}\left(Z\left(L^{-1}(a)+\cdot\right)\right)\left(L^{-1}\left(b-a,Z\left(L^{-1}(a)+\cdot\right)\right)\right)\in B \ |\ {\mathcal F}_{L^{-1}(a)}\right)\\
&=&P^0\left(-F_{\gamma^z}(Z(\cdot))  \left(L^{-1}(b-a)\right)\in B  \right)\\
&=& P^0\left(-X(L^{-1}(b-a))\in B\right) =P^0\left(Y(L^{-1}(b-a))\in B\right),
\end{eqnarray*}
which shows that $Y \circ L^{-1}$ is indeed a L\'{e}vy process on $(C_S,\mathcal{F},\mathcal{F}_{L^{-1}},P^z)$. Next, we show that it is also an $\alpha$-stable process on $(C_S,\mathcal{F},\mathcal{F}_{L^{-1}},P^0)$.

Recall the function $\phi:S\mapsto{\mathbb R}$ from \cite{williams1987local} given by $\phi(r,\theta)=r^\alpha\cos(\alpha\theta-\theta_1)$, $r\ge 0$, $0\le\theta\le\xi$ ($r$ and $\theta$ are polar coordinates). It is shown in \cite{williams1987local} that $\{\phi(Z(t)), t \ge 0\}$ is a local submartingale on $(C_S, {\mathcal F},\mathcal {F}_t,P^z)$ for each $z\in S$, that can be uniquely decomposed as
\begin{equation}\phi(Z(t))=\phi(Z(0))+M(t)+A(t),\label{decompose}\end{equation}
where $M$ is a continuous local martingale, and $A$ is an adapted, measurable, non-decreasing, continuous process, and $M(0)=A(0)=0$, $P^z$-a.s. (see (2.6) in \cite{williams1987local}). It is also shown in \cite{williams1987local} that the processes $A$ and $M$ can be selected so that they do not depend on $z$.  Moreover, by Theorem 2.6 in \cite{williams1987local} we have that
\begin{equation}L(t)=cA(t),\ t\ge 0,~~ P^z\textrm{-}\hbox{a.s.,}\label{la}\end{equation}
for some positive constant $c$.
Next for all $\lambda>0$ we introduce the transformed processes
$$A_\lambda(t)=\lambda^{-\alpha/2}A(\lambda t),\ \ X_\lambda(t)= \lambda^{-1/2}X(\lambda t),\ \ M_\lambda(t)=\lambda^{-\alpha/2}M(\lambda t),\ \ Z_\lambda(t)=\lambda^{-1/2}Z(\lambda t),~~ t \geq 0.$$
By Lemma 2.1 in  \cite{williams1987local}, we have that $Z_\lambda(\cdot)\buildrel \rm d\over = Z(\cdot)$ under $P^0$. Consider now the following identities given in the proof of Theorem 2.6 of \cite{williams1987local}:
$$\phi( Z(\cdot))\buildrel d\over = \phi( Z_\lambda(\cdot))=\phi(\lambda^{-1/2} Z(\lambda\cdot))=
\lambda^{-{\alpha\over 2}}\phi( Z(\lambda\cdot))=  $$
$$\lambda^{-{\alpha\over 2}}\left( M(\lambda\cdot)+ A(\lambda\cdot)\right) =  M_\lambda(\cdot)+  A_\lambda(\cdot),$$
where the identity in law is understood to be under $P^0$.
By the uniqueness of the decomposition in \eqref{decompose}, $ A_\lambda(\cdot)$ must depend on $ Z_\lambda(\cdot)$ in the same way as  $A(\cdot)$ depends on $ Z(\cdot)$. Since $A(\cdot)=A(\cdot)\circ Z(\cdot)$, so we have
$P^0$-almost surely
$ A_\lambda(\cdot)= A\circ Z_\lambda(\cdot)$. Furthermore, $\{\gamma^0(n),n\ge 1\}\in\Gamma$ implies $\{\lambda^{-1/2}\gamma^0(n),n\ge 1\}\in\Gamma$, thus by Lemma \ref{sub} there exists a sequence $\{\epsilon^\lambda(n), n\ge 1\}\subset \{\gamma^0(n), n\ge 1\}$ such that
\begin{equation*}F_{\lambda^{-1/2}\epsilon^\lambda}(\omega) =W(\cdot,\omega)=X(\cdot,\omega),\ P^0\textrm{-}\hbox{a.s.}
\end{equation*}
By \eqref{linear}, we have that $P^0$-almost surely
$$X_\lambda(t)=\lambda^{-1/2}X(\lambda t) = \lambda^{-1/2} F_{\epsilon^\lambda}(Z(\cdot))(\lambda t) = F_{\lambda^{-1/2}\epsilon^\lambda}(\lambda^{-1/2}Z(\lambda\cdot))( t) =F_{\lambda^{-1/2}\epsilon^\lambda}(Z_\lambda(\cdot))(t).$$
We summarize all these as
$$( X_\lambda(\cdot), A_\lambda(\cdot))= (F_{\lambda^{-1/2}\epsilon^\lambda}\circ Z_\lambda(\cdot),  A(\cdot)\circ Z_\lambda(\cdot)) \buildrel \rm d\over = (F_{\lambda^{-1/2}\epsilon^\lambda}\circ Z(\cdot), A(\cdot)\circ Z(\cdot)) = ( X(\cdot), A(\cdot))$$
under $P^0$, that is,
$$\Bigl(\lambda^{-{1\over 2}}X(\lambda\cdot),\lambda^{-{\alpha\over 2}}A(\lambda\cdot)\Bigr)\buildrel \rm d\over = \Bigl(X(\cdot),A(\cdot)\Bigr).$$
However, the identity \eqref{la} implies
$$\lambda^{-{\alpha\over 2}}L(\lambda t)=c \lambda^{-{\alpha\over 2}}A(\lambda t),$$
and thus we have that
$$\Bigl(\lambda^{-{1\over 2}}X(\lambda\cdot),\lambda^{-{\alpha\over 2}}L(\lambda\cdot)\Bigr)\buildrel d\over = \Bigl(X(\cdot),L(\cdot)\Bigr).$$
We now conclude that
$$\lambda^{-{1\over 2}}X\Bigl(L^{-1}\left(\lambda^{\alpha\over 2}\cdot\right)\Bigr)\buildrel d\over = X\Bigl(L^{-1}(\cdot)\Bigr),$$
which implies \eqref{stable}.
\end{proof}

We now present the proof of Theorem \ref{Theorem:Converse:Dirichlet:Process:Result}.

\begin{proof}[Proof of Theorem \ref{Theorem:Converse:Dirichlet:Process:Result} ]Following the discussion at the outset of the section, it only remains to prove the identity \eqref{Display:Condition:Converse:Main:Dirichlet}.
Let $T \geq 0$. Since $Y=-X$ on $(L^{-1}(\mathbb{R}_{+}))_T^{-} \subset \Lambda_T^{-} \subset [0,T]$, it follows by Definition \ref{Definition:Strong:P:Variation} that
\begin{eqnarray*}
  V_p(Y,[0,T]) &\geq&V_p(Y,(L^{-1}(\mathbb{R}_{+}))_T^{-} )~=~  V_p(X,(L^{-1}(\mathbb{R}_{+}))_T^{-} ),~p \geq 0.
\end{eqnarray*}
Hence it suffices to show that $P^0(V_p(X,(L^{-1}(\mathbb{R}_{+}))_T^{-}) < + \infty )=0$ for $0 < p \leq \alpha$. For each $A > 0$, let $T_A$ be defined by $L^{-1}(A)=T_A$. Since
$L^{-1}$ is $P^{0}$-a.s. right-continuous with  $L^{-1}(0)=0$, it follows that $T_A \Rightarrow 0$ as $A \downarrow 0$. Also recall that $P^{0}(T_A > 0)=1$ for each $A > 0$. Moreover, since $L^{-1}$ is $P^0$-a.s. non-decreasing, it follows by the change-of-variables
for $p$-variation (see (P4) of Chistyakov and Galkin \cite{chistyakov1998maps}) that $V_p(X,(L^{-1}(\mathbb{R}_{+}))_{T_A}^{-})=V_p(X \circ L^{-1}, [0,A]),A > 0$. Thus, in order to complete the proof it suffices to show that for $0 < p \leq \alpha$,
\begin{eqnarray}
 P^0( V_p(X \circ L^{-1},[0,A]) < + \infty) &=&0,~A > 0. \label{to:show}
\end{eqnarray}
Let $\nu$ be the Levy measure of $X\circ L^{-1}$ under $P^0$. According to Bretagnolle \cite{bretagnolle1972p} (see also Simon \cite{simon2004small}), \eqref{to:show} holds if and only if
\begin{equation}\int_{|u|<1}|u|^p\nu(du)=\infty.\label{integral}\end{equation}
However, since by Proposition \ref{Lemma:Y:Composed:With:L:Inverse:Is:Levy}, $Y \circ L^{-1} = -X \circ L^{-1} $ is an $\alpha$-stable process on
$(C_S,\mathcal{F},\mathcal{F}_{L^{-1}},P^0)$, it follows
by Theorem 14.3(ii) of  \cite{sato1999levy} that $\nu(dB)=\int_C \int_0^\infty 1_B(r\xi)r^{-1-\alpha}dr\lambda(d\xi)$, where $C$ is the unit circle in ${\mathbb R}^2$ and $\lambda$ is a finite measure on $C$. \eqref{integral} now follows immediately. \end{proof}

\section{Proof of Theorem \ref{necessary}}
\label{section:esp}

Before providing the proof of Theorem \ref{necessary}, we first proceed with some propositions, starting with a result that condition \eqref{full:space} is necessary for the existence of a solution to the ESP.

\begin{proposition}\label{first:proposition}
Let $z\in S$. If the ESP $(S,d(\cdot))$ for the Brownian motion $X$ on $(C_S,\mathcal{F},\mathcal{F}_t,P^z)$
 has a solution $P^z$-a.s., then \eqref{full:space} holds.

\end{proposition}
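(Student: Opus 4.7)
The plan is to argue by contradiction: assume the ESP has a solution $(\phi,\eta)$ $P^z$-a.s.\ and yet \eqref{full:space} fails, so that $K:=\overline{\mathrm{co}}(V\cup\{av_1, a\ge 0\}\cup\{av_2, a\ge 0\})$ is a proper closed convex cone in $\mathbb{R}^2$. I will produce a deterministic unit vector $w\ne 0$ forcing $\langle w,X(t)\rangle\ge 0$ for all $t\ge 0$, which contradicts the fact that $\langle w,X(\cdot)\rangle$ is a non-degenerate one-dimensional Brownian motion and is therefore $P^z$-a.s.\ unbounded below.

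First I would use Condition 3 of Definition \ref{Definition:Extended:Skorohod:Problem} together with $\eta(0)=\phi(0)-X(0)=0$ to show that $\eta(t)\in K$ for all $t\ge 0$: by \eqref{Display:Definition:Of:Direction:Of:Reflection} every $d(\phi(u))$ is one of $\{0\}$, $\{av_1,a\ge 0\}$, $\{av_2,a\ge 0\}$, or $V$, all contained in $K$, so the increment condition gives
\begin{equation*}
\eta(t)=\eta(t)-\eta(0)\in\overline{\mathrm{co}}\bigl[\cup_{u\in(0,t]}d(\phi(u))\bigr]\subseteq K.
\end{equation*}
Since $K\ne\mathbb{R}^2$, the polar cone $K^*:=\{w\in\mathbb{R}^2:\langle w,k\rangle\le 0\text{ for all }k\in K\}$ is nontrivial, so I fix a unit vector $w\in K^*$.

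The geometric heart of the argument is the claim $K^*\subseteq -S^*$, where $-S^*:=\{w':\langle w',s\rangle\ge 0\text{ for all }s\in S\}=\mathrm{cone}(n_1,n_2)$ is the dual cone of $S$. Since $K\supseteq\mathrm{cone}(v_1,v_2)$, one has $K^*\subseteq\mathrm{cone}(v_1,v_2)^*$, so it suffices to verify $\mathrm{cone}(v_1,v_2)^*\subseteq\mathrm{cone}(n_1,n_2)$. In polar coordinates on $\mathbb{R}^2$ the vectors $v_1$ and $v_2$ have angles $\pi/2+\theta_1$ and $\xi-\pi/2-\theta_2$, while the inward normals $n_1$ and $n_2$ have angles $\pi/2$ and $\xi-\pi/2$; the standard formula for the polar of a $2$-dimensional wedge identifies $\mathrm{cone}(v_1,v_2)^*$ with the angular arc $[\xi-\theta_2,\theta_1]$, which is non-empty precisely because $\alpha>1$, and $-S^*$ with the angular arc $[\xi-\pi/2,\pi/2]$. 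The inclusion $[\xi-\theta_2,\theta_1]\subseteq[\xi-\pi/2,\pi/2]$ is then immediate from $|\theta_j|<\pi/2$.

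Putting the pieces together, $\langle w,\eta(t)\rangle\le 0$ (from $w\in K^*$ and $\eta(t)\in K$) and $\langle w,\phi(t)\rangle\ge 0$ (from $w\in -S^*$ and $\phi(t)\in S$) combine via $\langle w,X(t)\rangle=\langle w,\phi(t)\rangle-\langle w,\eta(t)\rangle$ to give $\langle w,X(t)\rangle\ge 0$ for all $t\ge 0$ on the $P^z$-full-measure event that the ESP has a solution. But $\langle w,X(\cdot)\rangle$ is a $1$-dimensional Brownian motion starting at $\langle w,z\rangle$ with variance $t$, so $P^z\bigl(\inf_{t\ge 0}\langle w,X(t)\rangle=-\infty\bigr)=1$, yielding the contradiction. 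The main obstacle I expect is the polar-coordinate computation behind $K^*\subseteq -S^*$: it is the single place where the hypothesis $1<\alpha<2$ truly enters, and some care is needed to handle both signs of $\theta_1,\theta_2$ uniformly.
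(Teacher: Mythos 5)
Your proof is correct, and it takes a genuinely different route from the paper's. The paper argues directly on the primal side: it asserts that $C\ne\mathbb{R}^2$ forces $C\cap S=\{0\}$ (a geometric fact about the configuration of $v_1,v_2$ and $S$ when $\alpha>1$), picks an open set $B\subseteq C$ with $0\notin B$, and uses $P^z(X(t)\in B)>0$ to produce an $\omega$ for which $X(t)(\omega)\in\mathrm{int}(C)$ and $\tilde Z(t)(\omega)=X(t)(\omega)+\tilde Y(t)(\omega)\in C\cap S=\{0\}$, so that $C$ contains a nonzero interior vector together with its negative — impossible for a proper closed convex cone. You instead pass to the polar side: from $\eta(0)=0$ and ESP Condition \ref{Definition:ESP:Item:Three} you get $\eta(t)\in K$ for all $t$, then the inclusion $K^*\subseteq\mathrm{cone}(v_1,v_2)^*\subseteq\mathrm{cone}(n_1,n_2)=-S^*$ (which encodes the same geometric input as the paper's $C\cap S=\{0\}$, and is where $\alpha>1$ enters via $\xi-\theta_2\le\theta_1$) produces a unit vector $w$ with $\langle w,\eta(t)\rangle\le 0$ and $\langle w,\phi(t)\rangle\ge 0$ simultaneously, whence $\langle w,X(t)\rangle\ge 0$ for all $t$, contradicting the a.s.\ unboundedness of the one-dimensional Brownian motion $\langle w,X(\cdot)\rangle$. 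Your duality argument buys a cleaner endgame: it replaces the paper's combination of a positive-probability event with a pointwise geometric contradiction (and the implicit claim that an interior vector and its negative cannot both lie in a proper closed convex cone) by a single deterministic inequality on a $P^z$-full set plus the standard fact that Brownian motion is unbounded below; the price is the explicit computation of the polar cone $\mathrm{cone}(v_1,v_2)^*=[\xi-\theta_2,\theta_1]$, which you carry out correctly. One small point worth making explicit: $K^*$ is nontrivial precisely because $K\ne\mathbb{R}^2$, and $-S^*$ has nonempty interior because $\alpha>1$ together with $\theta_1,\theta_2<\pi/2$ forces $\xi<\pi$ — you use both facts implicitly and they do hold.
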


\begin{proof}
Let the polar coordinates of $v_i$ be $(\|v_i\|,\beta_i)$ for $i=1,2$. Then $\beta_1=\theta_1+{\pi/2}$ and $\beta_2=-(\pi/2-\xi+\theta_2)$. Thus, since by assumption $1 < \alpha < 2$, it follows that $\xi<\beta_1<\pi$ and $-(\pi-\xi)<\beta_2<0$. In addition, the condition $\alpha>1$ implies that the angle of $v_1$ and $v_2$ on the side which contains $S$ is $\beta_1-\beta_2>\pi$. Let $C=\overline{\rm co}(V \cup \{\alpha v_1, \alpha \geq 0\} \cup \{\alpha v_2, \alpha \geq 0\})$, which is a closed convex cone, and assume that $C\not={\mathbb R}^2$, which implies $C\cap S=\{0\}$.
 Let $\tilde Z,\tilde Y$ be a solution of the ESP $(S,d(\cdot))$ for $X$.  Let $B$ be an arbitrary open set in $C$ such that $0\notin B$, and let $t>0$ arbitrary. Now we have $P^z(X(t)\in B)>0$, and also $X(t)+\tilde Y(t)=\tilde Z(t)\in S$. However, whenever $X(t)\in B$, then $\tilde Y(t)\in C$ implies $X(t)+ \tilde Y(t)\in C$, which implies $X(t)+\tilde Y(t)=0$. But this is impossible, since a non-zero vector and its negative can not be in a closed convex cone that is neither ${\mathbb R}^2$, nor a single line.
\end{proof}

We now proceed to show that if the condition \eqref{full:space} holds, then $(Z,Y)$ solves the ESP $(S,d(\cdot))$ for $X$. We first provide some prepatory lemmas. For each $i=1,2,$ let
\begin{eqnarray*}
T_{\partial S_i} &=& \inf\{t > 0 : Z(t) \in \partial S_i\setminus\{0\}\}.
\end{eqnarray*}

\begin{lemma}\label{Proposition:Excursions:Hit:Both:Boundaries}
For each $i=1,2,$ $P^0(T_{\partial S_i} > 0)=0$.
\end{lemma}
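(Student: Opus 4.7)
The plan is to combine a Brownian-scaling dichotomy with the excursion-theoretic analysis already developed in Lemma \ref{Lemma:Y:Positive:Variation} and Section \ref{Section:Excursion:Theory:Background}. By Lemma~2.1 of \cite{williams1987local}, under $P^0$ the rescaled process $Z_\lambda(\cdot) := \lambda^{-1/2} Z(\lambda\cdot)$ has the same law as $Z$ for every $\lambda > 0$. Because $\partial S_i \setminus \{0\}$ is a half-ray with vertex at the origin, it is invariant under $z \mapsto \lambda^{-1/2} z$, giving the pathwise identity $T_{\partial S_i}(Z_\lambda) = \lambda^{-1} T_{\partial S_i}(Z)$. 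This yields $\lambda^{-1} T_{\partial S_i} \buildrel \rm d\over = T_{\partial S_i}$ under $P^0$ for every $\lambda > 0$, and any nonnegative random variable with this scaling property must take values in $\{0, \infty\}$ almost surely; hence $T_{\partial S_i} \in \{0, \infty\}$ $P^0$-a.s.

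It therefore suffices to rule out $T_{\partial S_i} = \infty$ $P^0$-a.s. For this I would combine the observation, extracted from the proof of Lemma \ref{Lemma:Y:Positive:Variation}, that $P^0$-a.s.\ each excursion interval $[G_j, D_j]$ admits some $\iota_j \in \{1,2\}$ and a sequence $t_n \downarrow G_j$ with $Z(t_n) \in \partial S_{\iota_j}$, together with the fact that the excursion point process at $0$ under $P^0$ is Poisson with some intensity measure $n$ on excursion space (Section \ref{Section:Excursion:Theory:Background}, \cite{williams1987local}). Writing $\{\iota = i\}$ for the event that an excursion immediately touches $\partial S_i$, it suffices to show that $n(\{\iota = i\}) > 0$ for each $i \in \{1,2\}$; by the Poisson structure there will then exist $P^0$-a.s.\ excursions of type $i$ with $G_j \in (0, \varepsilon)$ for every $\varepsilon > 0$, and any such excursion forces $T_{\partial S_i} \le t_n \to G_j \le \varepsilon$, yielding $T_{\partial S_i} = 0$ $P^0$-a.s.

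The main obstacle is establishing $n(\{\iota = i\}) > 0$ for each $i$. For this I would exploit the Skorokhod-problem description from Part~\ref{Proposition:Skorohod:Problem:Part:2} of Theorem \ref{Proposition:Skorokhod:Problem}: on $(G_j, D_j)$ the excursion $Z$ solves the SP driven by $X(G_j + \cdot) - X(G_j)$, a 2-dimensional Brownian motion from the vertex. Via the skew-product decomposition and the divergence of $\int_0^\varepsilon |B(s)|^{-2}\,ds$, the angular process of such a Brownian motion visits every open arc of the unit circle in every right neighborhood of $0$ $P^0$-a.s., so the driver visits each ray $\partial S_i \setminus \{0\}$ at times arbitrarily close to $G_j$. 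A short coupling between the free Brownian driver and the SP solution near the vertex (valid until the pushing term $Y - Y(G_j)$ becomes substantially nontrivial) should then transfer this positive mass to the SP solution itself, giving $n(\{\iota = i\}) > 0$ for each $i \in \{1,2\}$, which completes the argument.
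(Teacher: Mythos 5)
Your first step (the Brownian-scaling dichotomy forcing $T_{\partial S_i}\in\{0,\infty\}$ $P^0$-a.s.) is exactly the paper's first step and is correct, though note that merely ``ruling out $T_{\partial S_i}=\infty$ a.s.'' only yields $P^0(T_{\partial S_i}=0)>0$; to upgrade this to probability one you need the Blumenthal $0$--$1$ law (as the paper does), or else your second part must directly prove $T_{\partial S_i}=0$ a.s.

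The second part has a genuine gap at precisely the point you flag as the main obstacle. The proof of Lemma \ref{Lemma:Y:Positive:Variation} shows that each excursion immediately touches $\partial S=\partial S_1\cup\partial S_2$, but gives no control over \emph{which} edge, and your proposed coupling to transfer the angular behaviour of the free Brownian driver to the reflected excursion cannot work as stated: there is no right-neighborhood of $G_j$ on which the SP solution coincides with (or stays close to) the free driver, because the pushing term is active immediately --- indeed Lemma \ref{Lemma:Y:Positive:Variation} itself asserts $V_1(Y,[G_j,D_j\wedge(G_j+\varepsilon)])>0$ for every $\varepsilon>0$, so the ``time until $Y-Y(G_j)$ becomes substantially nontrivial'' is zero. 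Equally, the fact that the free driver visits the ray $\partial S_i\setminus\{0\}$ arbitrarily soon does not transfer, since the free driver also immediately exits $S$. The claim $n(\{\iota=i\})>0$ therefore remains unproven, and it is essentially equivalent in difficulty to the lemma itself. The paper avoids excursion theory here entirely: after the scaling dichotomy and Blumenthal, it only needs $P^0(T_{\partial S_i}<\infty)>0$, which it gets by conditioning on $Z(t)$ for a fixed $t>0$ (which lies in $\mathrm{int}(S)$ a.s.\ by Lemma 4.2 of \cite{williams1985recurrence} plus scaling) and using Proposition \ref{williams}: from an interior point the process evolves as a free Brownian motion until it first hits $\partial S$, and free Brownian motion from an interior point lands on $\partial S_i\setminus\{0\}$ with positive probability. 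Replacing your third paragraph with this Markov-property argument (and restoring the $0$--$1$ law in the first part) yields a complete proof.
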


\begin{proof}
Let $i=1$ or $2$, and
$$T_{\partial S_i}^c=\inf\left\{t\ge 0: {Z(ct)\over\sqrt c}\in\partial S_i\setminus\{0\}\right\},~c > 0.$$
Then, by Lemma 2.1 in \cite{williams1987local},
$$P^0\left(T_{\partial S_i}^c>t\right) = P^0\left(T_{\partial S_i}^1 >t\right),~t \geq 0.$$
On the other hand,
$$T_{\partial S_i}^c=\inf\left\{t\ge 0: {Z(ct)}\in\partial S_i\setminus\{0\}\right\}= \inf\left\{{u\over c}\ge 0: {Z(u)}\in\partial S_i\setminus\{0\}\right\}={1\over c}T_{\partial S_i}^1.$$
Hence,
$$P^0(T_{\partial S_i}^1>ct) = P^0(T_{\partial S_i}^1>t),$$
which implies $P^0(T_{\partial S_i}=0) + P^0(T_{\partial S_i}=\infty)=1$. By the Blumenthal $0$-$1$ law, $P^0(T_{\partial S_i}=0)$ is either 0 or 1. Hence, in order to complete the proof it suffices  that  $P^0(T_{\partial S_i}=\infty)<1$, and this is what we shall do.

By Lemma 4.2 in \cite{williams1985recurrence}, for Lebesgue-almost every $t\in(0,\infty)$,
\begin{equation} P^0(Z(t)\in\partial S)=0.\label{zero}\end{equation}
However, by Lemma 2.1 in \cite{williams1987local}, for every $c>0$
$$P^0(Z(t)\in\partial S)=P^0\left({Z(ct)\over\sqrt c}\in \partial S\right) = P^0\left({Z(ct)}\in \partial S\right), $$
and so \eqref{zero} holds for every $t>0$. Now let $t>0$ be arbitrary and
$$T_{\partial S_i}(t) = \inf\{s\ge t: Z(s)\in\partial S_i\setminus\{0\}\}.$$
Then, by the Markov property,
\begin{equation} P^0(T_{\partial S_i}(t)<\infty)= E^0 \left[P^0(T_{\partial S_i}(t)<\infty|Z(t))\right]=E^0\left[P^{Z(t)}(T_{\partial S_i}<\infty)\right].\label{markov}\end{equation}
Let $T_{\partial S}= \inf\{t\ge 0: Z(t)\in\partial S\}$. By Proposition \ref{williams}, for each $z\in \hbox{int}(S)$  the law of $Z(\cdot\wedge T_{\partial S})$ under $P^z$ coincides with the law of a Brownian motion started at $z$ and stopped at the first hitting time of $\partial S$. It follows that for all $z\in\hbox{int} (S)$ we have $P^z(T_{\partial S}<\infty)=1$ and $P^z(Z(T_{\partial S})\in\partial S_i\setminus\{0\})>0$,
which implies $P^z(T_{\partial S_i}<\infty)>0$. Then it follows from \eqref{zero} that the last expression in \eqref{markov} is strictly positive, and so also is the first expression there, i.e.  $P^0(T_{\partial S_i}(t)<\infty)>0$. Thus, $P^0(T_{\partial S_i}<\infty)>0$.
\end{proof}

\begin{lemma}\label{endpoints:are:nice}
For each $z\in S$ and $P^z$-a.e.  $\omega\in C_s$, the following holds: for the endpoint $D$  of each excursion interval of $Z(\omega)$ away from zero and each $\epsilon>0$, there exist $u_1,u_2,u\in(D,D+\epsilon)$ such that
\begin{equation} Z(u_1)\in\partial S_1\setminus\{0\}, \  Z(u_2)\in\partial S_2\setminus\{0\},\ \hbox{and}\  Z(u)=0.\label{all:the:v}\end{equation}
\end{lemma}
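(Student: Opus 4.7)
The plan is to reduce the statement to an ``immediate visitation'' property at the origin and then transfer it to every excursion endpoint via the strong Markov property along a countable dense set of stopping times.

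\textbf{Step 1 (Immediate visitation from $0$).} First I would establish that $P^0$-a.s., for every $\epsilon>0$ there exist $u_1,u_2,u\in(0,\epsilon)$ with $Z(u_1)\in\partial S_1\setminus\{0\}$, $Z(u_2)\in\partial S_2\setminus\{0\}$, and $Z(u)=0$. For the first two, Lemma \ref{Proposition:Excursions:Hit:Both:Boundaries} gives $P^0(T_{\partial S_i}=0)=1$ for $i=1,2$, which is exactly the claim that $Z$ meets $\partial S_i\setminus\{0\}$ in every right neighborhood of $0$. For the existence of $u$ with $Z(u)=0$, I would invoke the fact (recorded at the start of Section \ref{Section:Excursion:Theory:Background}, via Lemma 2.2 of Williams \cite{williams1987local}) that the origin is a regular point for $Z$, so $\inf\{t>0:Z(t)=0\}=0$, $P^0$-a.s.

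\textbf{Step 2 (Countable family of stopping times).} For each rational $q>0$ define
\begin{equation*}
\tau_q~=~\inf\{t\ge q:Z(t)=0\},
\end{equation*}
which is an $\mathcal{F}_t$-stopping time since $\mathcal{F}_t$ is right-continuous. On $\{\tau_q<\infty\}$ we have $Z(\tau_q)=0$ by continuity. The strong Markov property of $\mathcal{Z}$ then yields that, conditional on $\mathcal{F}_{\tau_q}\cap\{\tau_q<\infty\}$, the shifted process $Z(\tau_q+\cdot)$ has law $P^0$. Applied to Step 1, this shows that for each fixed rational $q$, $P^z$-a.s.\ on $\{\tau_q<\infty\}$, the immediate visitation property of Step 1 holds at time $\tau_q$. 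Taking a countable union over the rationals, I obtain a single $P^z$-null set off which the property holds simultaneously at every $\tau_q$.

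\textbf{Step 3 (Every excursion endpoint equals some $\tau_q$).} Now fix any excursion interval $(G_i,D_i)$. Since $G_i<D_i$, the interval contains a rational $q$. By definition of the excursion, $Z\ne 0$ on $(G_i,D_i)$ and $Z(D_i)=0$, hence $\tau_q=D_i$. In particular $\tau_q<\infty$, so Step 2 applies: for every $\epsilon>0$ there exist $u_1,u_2,u\in(\tau_q,\tau_q+\epsilon)=(D_i,D_i+\epsilon)$ with the required properties. Since this holds for every $i\ge 1$ off the same null set, the lemma follows.

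\textbf{Expected obstacle.} Steps 1--3 are quite short, so the only real delicacy is bookkeeping: one must combine the two conclusions of Lemma \ref{Proposition:Excursions:Hit:Both:Boundaries} with regularity of $0$ simultaneously, and verify that the null exceptional sets in the strong Markov step can be unified over all rationals (which is immediate from countability). The fact that $G_i$ itself need not be a stopping time is irrelevant, because the argument only uses $D_i$, and $D_i=\tau_q$ for any rational $q\in(G_i,D_i)$.
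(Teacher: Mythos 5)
Your proposal is correct and follows essentially the same route as the paper: reduce to the immediate-visitation property at the origin under $P^0$ (via Lemma \ref{Proposition:Excursions:Hit:Both:Boundaries} and the regularity of the vertex) and then transfer it to all excursion endpoints by the strong Markov property applied along a countable family of stopping times exhausting those endpoints. The only cosmetic difference is your choice of that family ($\tau_q=\inf\{t\ge q: Z(t)=0\}$ for rational $q$) versus the paper's use of the stopping times $T_k^n$, the endpoints of excursions of length exceeding $1/n$ already introduced in the proof of Proposition \ref{main}.
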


\begin{proof} Recall the sequence of stopping times $\{T_k^n,k\ge 1\}$ from the proof of Proposition \ref{main}, that is the sequence of endpoints of excursions away from zero with length larger than $1/n$. By the Strong Markov Property, Proposition \ref{Proposition:Excursions:Hit:Both:Boundaries}, and the regularity of the vertex (see Lemma 2.2 of \cite{williams1987local}), for every $k,n\ge 1$, we have that $P^z$-almost surely there exist  $u_1,u_2,u\in(T_k^n,T_k^n+\epsilon)$ such that \eqref{all:the:v} holds. Since the set of excursions is countable, and $\cup_{n\ge 1}\{T_k^n,k\ge 1\}$ is exactly the set of endpoints of all excursion intervals, the statement follows.
\end{proof}

The following is an immediate corollary of Lemma \ref{endpoints:are:nice}.

\begin{cor}\label{no:isolated} For each $z\in S$ and $P^z$-a.e. $\omega\in C_s$, the set $\Lambda$ has no isolated point.
\end{cor}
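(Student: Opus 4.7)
The plan is to argue by contradiction, combining Lemma \ref{endpoints:are:nice} with the connected-component structure of $\Lambda^C$ given by \eqref{display:complement:of:lambda}. Suppose, for contradiction, that $t \in \Lambda$ is isolated, so there exists $\delta>0$ with $(t-\delta,t+\delta)\cap\Lambda=\{t\}$. Then both $(t-\delta,t)$ and $(t,t+\delta)$ are connected subsets of the open set $\Lambda^C$, and each must therefore lie inside a single connected component of $\Lambda^C$, i.e.\ inside $[0,\tau_0)$ or inside some $(G_i,D_i)$ with $i \ge 1$.

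Consider $(t,t+\delta)$ first. The possibility $(t,t+\delta)\subset [0,\tau_0)$ would force $t<\tau_0$, contradicting $t\in\Lambda$ since $[0,\tau_0)\cap\Lambda=\emptyset$. Hence $(t,t+\delta)\subset (G_i,D_i)$ for some $i$, and since $t\in\Lambda$ cannot lie inside an excursion interval, we must have $t=G_i$. An identical argument applied to $(t-\delta,t)$ gives either $t=D_j$ for some $j\ge 1$, or $t=\tau_0$.

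If $t=G_i=D_j$, then Lemma \ref{endpoints:are:nice} applied to the endpoint $D_j$ with any $\epsilon<\delta$ produces $u\in (D_j,D_j+\epsilon)\subset (t,t+\delta)$ with $Z(u)=0$, contradicting $(t,t+\delta)\cap\Lambda=\emptyset$. If instead $t=G_i=\tau_0$, then under $P^0$ we have $\tau_0=0$ and the regularity of the origin for $Z$ (Lemma 2.2 of Williams \cite{williams1987local}) supplies zeros of $Z$ in every right-neighborhood of $0$; under $P^z$ with $z\neq 0$, the same regularity combined with the strong Markov property at $\tau_0$ yields zeros of $Z$ in $(\tau_0,\tau_0+\delta)$. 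Either way, $(t,t+\delta)$ must meet $\Lambda$, a contradiction. The only nontrivial input is Lemma \ref{endpoints:are:nice} itself, used in the case $t=G_i=D_j$; everything else is a short case analysis driven by the decomposition \eqref{display:complement:of:lambda} and the known regularity of the origin, so there is no real obstacle beyond the work already done in Lemma \ref{endpoints:are:nice}.
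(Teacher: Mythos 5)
Your proof is correct and follows the paper's intended route: the paper states this corollary as an ``immediate'' consequence of Lemma \ref{endpoints:are:nice}, and your case analysis (an isolated point of $\Lambda$ would have to be simultaneously some $G_i$ and either some $D_j$ or $\tau_0$, the first possibility being excluded by Lemma \ref{endpoints:are:nice} and the second by regularity of the origin together with the strong Markov property) is exactly the argument that makes it immediate. No gaps.
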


We now present the proof that under \eqref{full:space},  $(Z,Y)$ solves the ESP $(S,d(\cdot))$ for $X$.

\begin{proposition}\label{second:proposition}
If \eqref{full:space} holds, then for each $z \in S$, $(Z,Y)$ $P^z$-a.s.  solves the ESP $(S,d(\cdot))$ for the Brownian motion $X$ on $(C_S,\mathcal{F},\mathcal{F}_t,P^z)$.
\end{proposition}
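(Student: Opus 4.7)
The plan is to verify the four conditions of Definition \ref{Definition:Extended:Skorohod:Problem} for $(\phi, \eta, \psi) = (Z, Y, X)$. Conditions \ref{Definition:ESP:Item:One} and \ref{Definition:ESP:Item:Two} are immediate from $Y = Z - X$ and $Z \in S$, while condition \ref{Definition:ESP:Item:Four} follows at once from the continuity of $Y$, which makes $Y(t) - Y(t-) = 0$ trivially a member of every closed convex cone. The entire content of the proof is thus condition \ref{Definition:ESP:Item:Three}, namely
\begin{equation*}
Y(t) - Y(s) \in \overline{\mathrm{co}}\Bigl[\bigcup_{u \in (s, t]} d(Z(u))\Bigr], \quad 0 \le s < t.
\end{equation*}
I would fix such $s < t$ and split according to whether the open interval $(s, t)$ meets the zero set $\Lambda$.

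In the first case, $(s, t) \cap \Lambda = \emptyset$. The connectedness of $(s, t)$ combined with the decomposition \eqref{display:complement:of:lambda} of $\Lambda^c$ forces $(s, t)$ to lie in a single component, either $(G_i, D_i)$ for some $i \ge 1$ or $[0, \tau_0)$. Part \ref{Proposition:Skorohod:Problem:Part:2} of Theorem \ref{Proposition:Skorokhod:Problem} asserts that, up to the affine recentering at the left endpoint, $(Z, Y)$ solves the standard SP on the enclosing closed interval. The $\gamma$-representation in item \ref{Definition:SP:Item:Five} of Definition \ref{Definition:Skorohod:Problem} then yields, for every $t' \in (s, t)$,
\begin{equation*}
Y(t') - Y(s) = \int_{s}^{t'} \gamma(u)\, d\bar{Y}(u), \quad \gamma(u) \in d^{1}(Z(u)) \subset d(Z(u)),
\end{equation*}
which places $Y(t') - Y(s)$ in the closed convex cone generated by $\{d(Z(u)) : u \in (s, t']\}$. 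Passing to the limit $t' \uparrow t$ via the continuity of $Y$ and the closedness of closed convex hulls completes condition \ref{Definition:ESP:Item:Three} in this case.

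In the second case some $u^\ast \in (s, t)$ satisfies $Z(u^\ast) = 0$, and my aim would be to show that $\overline{\mathrm{co}}[\bigcup_{u \in (s, t]} d(Z(u))] = \mathbb{R}^2$, which renders condition \ref{Definition:ESP:Item:Three} automatic. The union already contains $d(0) = V$ from $u = u^\ast$. To adjoin the rays $\{a v_1 : a \ge 0\}$ and $\{a v_2 : a \ge 0\}$, I would appeal to Lemma \ref{Proposition:Excursions:Hit:Both:Boundaries} and the strong Markov property: at any stopping time $\sigma$ with $Z(\sigma) = 0$, the process $Z$ immediately hits both $\partial S_1 \setminus \{0\}$ and $\partial S_2 \setminus \{0\}$ in $(\sigma, \sigma + \varepsilon)$ for every $\varepsilon > 0$. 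Provided some such $\sigma$ lies in $[u^\ast, t)$, this produces $u_1, u_2 \in (u^\ast, t) \subset (s, t]$ with $Z(u_k) \in \partial S_k \setminus \{0\}$, and \eqref{full:space} then delivers the claimed equality with $\mathbb{R}^2$. The principal obstacle is to arrange such a $\sigma$ for every zero $u^\ast$ on a single $P^z$-a.s.\ event; I would handle this by applying the strong Markov argument at the countable family $\{G_i\} \cup \{D_j\}$ of stopping times, then using Corollary \ref{no:isolated} and the density of $\{D_j\}$ in $L^{-1}(\mathbb{R}_+)$ from the right (which follows from $L^{-1}$ being a pure-jump stable subordinator of index $\alpha/2 < 1$ with jump times dense in $\mathbb{R}_+$) to transfer the conclusion to an arbitrary $u^\ast \in \Lambda$.
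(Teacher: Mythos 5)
Your proposal is correct and follows essentially the same route as the paper: the same reduction to Condition \ref{Definition:ESP:Item:Three}, the same dichotomy on whether $(s,t)$ meets $\Lambda$, the standard Skorokhod representation from Part \ref{Proposition:Skorohod:Problem:Part:2} of Theorem \ref{Proposition:Skorokhod:Problem} in the first case, and Lemma \ref{Proposition:Excursions:Hit:Both:Boundaries} plus a strong-Markov argument at the countable family of excursion endpoints (which is exactly the content of the paper's Lemma \ref{endpoints:are:nice}) in the second. The only cosmetic difference is how you locate an excursion endpoint inside $(s,t)$: the paper takes two zeros in $(s,t)$ via Corollary \ref{no:isolated} and uses Condition \ref{Condition:3:Submartingale} of Definition \ref{vw} (the zero set is Lebesgue-null) to sandwich an excursion between them, which is a bit more direct than your density-of-$\{D_j\}$ argument but amounts to the same thing.
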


\begin{proof} Let  $z \in S$. We will verify that $P^z$-a.s.  $(Z,Y)$ satisfies Conditions \ref{Definition:ESP:Item:One} through \ref{Definition:ESP:Item:Four} of Definition \ref{Definition:Extended:Skorohod:Problem} with $\psi=X$ and $(\phi,\eta)=(Z,Y)$. Condition \ref{Definition:ESP:Item:One} is clear
since by the Doob-Meyer type decomposition of Theorem \ref{Theorem:Main:Dirichlet:Process:Result}, we have that $Z=X+Y$. Next, Condition \ref{Definition:ESP:Item:Two} is immediate as well since $Z \in C_S$. Now recall that $X$ is a Brownian motion started at $z$
on $(C_S,\mathcal{F},\mathcal{F}_t,P^z)$ and so $X$ is $P^z$-a.s continuous. Hence $Y=Z-X$ is $P^z$-a.s. continuous as well and so Condition \ref{Definition:ESP:Item:Four} holds since $ 0 \in
 \overline{\mathrm{co}}[d(\phi(t))]$ for $t \geq 0$.

In order to complete the proof, it suffices to show that $P^z$-a.s. Condition \ref{Definition:ESP:Item:Three} holds. That is, it suffices to show that $P^z$-a.s. for each $t \geq 0$ and $s \in [0,t]$,
\begin{eqnarray}
Y(t) - Y(s) &\in& \overline{\mathrm{co}}[\cup_{u \in (s,t]}d(Z(u))]. \label{Display:Condtion:Three:For:Z}
\end{eqnarray}
Let $0\le s <t$ arbitrary. We distinguish between two cases. In the first case, the interval $[s,t]$ lies entirely within a single excursion of $Z$ away from zero, i.e. $(s,t)\cap \Lambda=\emptyset.$ In this case, by Part \ref{Proposition:Skorohod:Problem:Part:2} of  Theorem \ref{Proposition:Skorokhod:Problem}, $Y(v)=RU(v)$ where $U$ is non-decreasing on $[s,t]$, and $U_i$ is increasing on $[s,t]$ only at times $v\in[s,t]$ when $Z(v)\in \partial S_i$ for $i=i,2$, and then \eqref{Display:Condtion:Three:For:Z} follows.
Next, we show that \eqref{Display:Condtion:Three:For:Z} holds in the second case, where $(s,t)\cap\Lambda\not=\emptyset$. In this case, let $v\in  (s,t)\cap\Lambda$. By Corollary \ref{no:isolated}, there exists another point $u$ besides $v$ in  $(s,t)\cap\Lambda$, and, by Condition
 \ref{Condition:3:Submartingale} of Definition \ref{vw}, there exists an excursion interval included in $[u,v]$ or in $[v,u]$, depending on whether $u<v$ or $v<u$. In either case, by Lemma \ref{endpoints:are:nice} this implies the existence of $u_1,u_2 \in (s,t)$ such that $Z(u_i)\in\partial S_i\setminus\{0\}$, $i=1,2$. Then, by \eqref{full:space},
\begin{eqnarray*}
\overline{\mathrm{co}}(\cup_{u\in(s,t]}d(Z(u)))~=~
\overline{\mathrm{co}}\left(V\cup\{a v_1,a\ge 0\}\cup\{a v_2,a\ge 0\}\right)~=~\mathbb{R}^2,
\end{eqnarray*}
and thus \eqref{Display:Condtion:Three:For:Z} is trivially satisfied.
\end{proof}

The proof of Theorem \ref{necessary} is now immediate.

\begin{proof} [Proof of Theorem \ref{necessary}]
The theorem follows by the combination of Propositions \ref{first:proposition} and \ref{second:proposition}.
\end{proof}

\bibliographystyle{plain}

\bibliography{bibfile}

\begin{thebibliography}{10}

\bibitem{bertoin1999subordinators}
J.~Bertoin.
\newblock Subordinators: examples and applications. \textup{L}ectures on
  probability theory and statistics (\textup{S}aint-\textup{F}lour, 1997),
  1--91.
\newblock {\em Lecture Notes in Math}, 1717, 1999.

\bibitem{blumenthal2007markov}
R.M. Blumenthal and R.K. Getoor.
\newblock {\em Markov processes and potential theory}.
\newblock Courier Corporation, 2007.

\bibitem{bretagnolle1972p}
J.~Bretagnolle.
\newblock p-variation de fonctions al{\'e}atoires l{\`e}re partie: S{\'e}ries
  de \textup{R}ademacher.
\newblock In {\em S{\'e}minaire de Probabilit{\'e}s VI Universit{\'e} de
  Strasbourg}, pages 51--63. Springer, 1972.

\bibitem{brown:pearcy:operator:theory}
A.~Brown and C.~Pearcy.
\newblock {\em Introduction to Operator Theory}.
\newblock Springer, 1977.

\bibitem{chistyakov1998maps}
V.V. Chistyakov and O.E. Galkin.
\newblock On maps of bounded p-variation with $p>1$.
\newblock {\em Positivity}, 2(1):19--45, 1998.

\bibitem{cinlar2011probability}
E.~{\c{C}}inlar.
\newblock {\em Probability and stochastics, \textup{V}olume 261 of Graduate
  texts in Mathematics}.
\newblock Springer Science and Business Media, 2011.

\bibitem{cinlar1980semimartingales}
E.~{\c{C}}inlar, J.~Jacod, P.~Protter, and M.J. Sharpe.
\newblock Semimartingales and \textup{M}arkov processes.
\newblock {\em Probability Theory and Related Fields}, 54(2):161--219, 1980.

\bibitem{coquet2006natural}
F.~Coquet, A.~Jakubowski, J.~M{\'e}min, and L.~S{\l}ominski.
\newblock Natural decomposition of processes and weak \textup{D}irichlet
  processes.
\newblock In {\em In Memoriam Paul-Andr{\'e} Meyer}, pages 81--116. Springer,
  2006.

\bibitem{EK86}
S.~Ethier and T.~Kurtz.
\newblock {\em Markov Processes: Characterization and Convergence}.
\newblock John Wiley \& Sons, New York, 1986.

\bibitem{follmer1981calcul}
H.~F{\"o}llmer.
\newblock Calcul d'\textup{I}t{\^o} sans probabilit{\'e}s.
\newblock {\em S{\'e}minaire de probabilit{\'e}s de Strasbourg}, 15:143--150,
  1981.

\bibitem{follmer1981dirichlet}
H.~F{\"o}llmer.
\newblock Dirichlet processes.
\newblock In {\em Stochastic \textup{I}ntegrals}, pages 476--478. Springer,
  1981.

\bibitem{friz2010multidimensional}
P.K. Friz and N.B. Victoir.
\newblock {\em Multidimensional stochastic processes as rough paths: theory and
  applications}, volume 120.
\newblock Cambridge University Press, 2010.

\bibitem{kang2010dirichlet}
W.~Kang and K.~Ramanan.
\newblock A \textup{D}irichlet process characterization of a class of reflected
  diffusions.
\newblock {\em The Annals of Probability}, 38(3):1062--1105, 2010.

\bibitem{kang2014submartingale}
W.~Kang and K.~Ramanan.
\newblock On the submartingale problem for reflected diffusions in domains with
  piecewise smooth boundaries.
\newblock {\em arXiv preprint arXiv:1412.0729}, 2014.

\bibitem{KaratzasShreve}
I.~Karatzas and S.~E. Shreve.
\newblock {\em Brownian Motion and Stochastic Calculus}.
\newblock Springer, 1998.

\bibitem{lieb2001analysis}
E.H. Lieb and M.~Loss.
\newblock {\em Analysis}.
\newblock American Mathematical Society, Providence, 2001.

\bibitem{ramanan2006reflected}
K.~Ramanan.
\newblock Reflected diffusions defined via the extended \textup{S}korokhod map.
\newblock {\em Electron. J. Probab}, 11(36):934--992, 2006.

\bibitem{revuzyor}
D.~Revuz and M.~Yor.
\newblock {\em Continuous martingales and Brownian motion}.
\newblock Springer, 1999.

\bibitem{sato1999levy}
K.~Sato.
\newblock {\em L{\'e}vy processes and infinitely divisible distributions}.
\newblock Cambridge university press, 1999.

\bibitem{simon2004small}
T.~Simon.
\newblock Small ball estimates in p-variation for stable processes.
\newblock {\em Journal of Theoretical Probability}, 17(4):979--1002, 2004.

\bibitem{stroock2007multidimensional}
D.W. Stroock and S.R.S. Varadhan.
\newblock {\em Multidimensional diffusion processes}.
\newblock Springer, 2007.

\bibitem{varadhan1985brownian}
S.R.S. Varadhan and R.J. Williams.
\newblock Brownian motion in a wedge with oblique reflection.
\newblock {\em Communications on pure and applied mathematics}, 38(4):405--443,
  1985.

\bibitem{williams1985recurrence}
R.J. Williams.
\newblock Recurrence classification and invariant measure for reflected
  \textup{B}rownian motion in a wedge.
\newblock {\em The Annals of Probability}, pages 758--778, 1985.

\bibitem{williams1985reflected}
R.J. Williams.
\newblock Reflected \textup{B}rownian motion in a wedge: semimartingale
  property.
\newblock {\em Zeitschrift f{\"u}r Wahrscheinlichkeitstheorie und Verwandte
  Gebiete}, 69(2):161--176, 1985.

\bibitem{williams1987local}
R.J. Williams.
\newblock Local time and excursions of reflected \textup{B}rownian motion in a
  wedge.
\newblock {\em Publications of the Research Institute for Mathematical
  Sciences}, 23(2):297--319, 1987.

\end{thebibliography}


\noindent Peter Lakner \\
Stern School of Business, New York University, 44 West 4th Street, New York, NY 10012\\
E-mail address: plakner@stern.nyu.edu\\
~\\
\noindent Josh Reed \\
Stern School of Business, New York University, 44 West 4th Street, New York, NY 10012\\
E-mail address: jreed@stern.nyu.edu\\
~\\
\noindent Bert Zwart \\
CWI, P.O. Box 94079, 1090 GB Amsterdam, Netherlands\\
E-mail address: Bert.Zwart@cwi.nl\\

\end{document}